\newcommand{\eps}{\varepsilon}
\newcommand{\wdt}[1]{\widetilde #1}
\newcommand{\wdh}[1]{\widehat #1}
\newcommand{\ovl}[1]{\overline #1}
\newcommand{\pl}{\partial}
\newcommand{\bs}{\backslash}
\newcommand{\op}{\overset{\circ}{*}}
\newcommand{\vol}{\operatorname{vol}}
\newcommand{\im}{\operatorname{im}}
\newcommand{\tr}{\operatorname{tr}}
\newcommand{\otr}{\operatorname{Tr}}
\newcommand{\Real}{\operatorname{Re}}
\newcommand{\Ima}{\operatorname{Im}}
\newcommand{\sym}{\operatorname{Sym}}
\newcommand{\End}{\operatorname{End}}
\renewcommand{\hom}{\operatorname{Hom}}
\newcommand{\inj}{\operatorname{inj}}
\newcommand{\id}{\operatorname{Id}}
\newcommand{\T}{\mathrm{T}}
\newcommand{\B}{\mathrm{B}}
\newcommand{\N}{\mathrm{N}}
\newcommand{\PSL}{\mathrm{PSL}}
\newcommand{\SL}{\mathrm{SL}}
\newcommand{\SU}{\mathrm{SU}}
\newcommand{\GL}{\mathrm{GL}}
\newcommand{\so}{\mathcal{O}}
\newcommand{\sh}{\mathcal{H}}
\newcommand{\LG}{\mathfrak{g}}
\newcommand{\LP}{\mathfrak{p}}
\newcommand{\LSL}{\mathfrak{sl}}
\newcommand{\frf}{\mathfrak{f}}
\newcommand{\fra}{\mathfrak{A}}
\newcommand{\frb}{\mathfrak{B}}
\newcommand{\frc}{\mathfrak{C}}
\newcommand{\fri}{\mathfrak{I}}
\newcommand{\frj}{\mathfrak{J}}
\newcommand{\abs}{\mathrm{abs}}
\newcommand{\cusp}{\mathrm{cusp}}
\newcommand{\disc}{\mathrm{disc}}
\newcommand{\eis}{\mathrm{Eis}}
\newcommand{\tors}{\mathrm{tors}}
\newcommand{\free}{\mathrm{free}}
\newcommand{\CC}{\mathbb C}
\newcommand{\RR}{\mathbb R}
\newcommand{\ZZ}{\mathbb Z}
\newcommand{\HH}{\mathbb H}
\newcommand{\PP}{\mathbb P}
\newcommand{\QQ}{\mathbb Q}
\newcommand{\FF}{\mathbb F}
\newcommand{\Ade}{\mathbb A}
\title{Analytic, Reidemeister and homological torsion for congruence three--manifolds}
\author{Jean Raimbault}
\address{Institut de Math\'ematiques de Toulouse ; UMR5219 \\ Universit\'e de Toulouse ; CNRS \\ UPS IMT, F-31062 Toulouse Cedex 9, France}
\email{Jean.Raimbault@math.univ-toulouse.fr}
\subjclass{Primary 11F75 ; Secondary 11F72, 22E40, 57M10}
\keywords{Congruence groups, hyperbolic manifolds, homology}
\numberwithin{equation}{section}
\begin{document}

\newtheorem{theostar}{Theorem}
\renewcommand*{\thetheostar}{\Alph{theostar}}
\newtheorem{corstar}[theostar]{Corollary}
\newtheorem{propstar}[theostar]{Proposition}
\newtheorem{conjstar}{Conjecture}
\renewcommand*{\theconjstar}{\Alph{conjstar}}
\newtheorem{queststar}[conjstar]{Question}

\newtheorem{theo}{Theorem}[section]
\newtheorem{lemma}[theo]{Lemma}
\newtheorem{prop}[theo]{Proposition}

\begin{abstract}
For a given Bianchi group $\Gamma$ and certain natural coefficent modules $V_\ZZ$ and sequences $\Gamma_n$ of congruence subgroups of $\Gamma$ we give a conjecturally optimal upper bound for the size of the torsion subgroup of $H_1(\Gamma_n;V_\ZZ)$. We also prove limit multiplicity results for the irreducible components of $L_\cusp^2(\Gamma_n\bs\SL_2(\CC))$. 
\end{abstract}

\maketitle

\setcounter{tocdepth}{1}
\tableofcontents
\setcounter{tocdepth}{2}

\section{Introduction}

\subsection{Torsion in the homology of arithmetic groups and hyperbolic manifolds}

Let $\Gamma$ be a discrete group and $V_\ZZ$ a free, finitely generated $\ZZ$-module with a $\Gamma$-action. The cohomology $H^*(\Gamma; V_\ZZ)$ is an important invariant of $\Gamma$ since it is both accessible to computation (though not necessarily efficiently) and often contains nontrivial information. If $\Gamma$ is the fundamental group of an aspherical manifold $M$ then there is a local system $\mathcal V$ on $M$ such that $H^*(\Gamma; V_\ZZ) = H^*(M; \mathcal V)$. When $M$ is endowed with a Riemannian metric this gives analyticals tools for the study of the characteristic zero cohomology $H^*(\Gamma; V_\CC)$. Maybe the most famous instance of this is when $\Gamma$ is a torsion-free congruence subgroup of $\PSL_2(\ZZ)$ and $V_\ZZ$ is a space of homogeneous polynomials. In this case, by the Eichler--Shimura isomorphism the cohomology can be computed via classical modular forms which correspond to certain harmonic forms on the Riemann surface $\Gamma \bs \HH^2$ (where $\HH^2$ is the hyperbolic plane). More generally, if $\Gamma$ is an arithmetic lattice in a real Lie group then classes in $H^*(\Gamma; V_\CC)$ correspond to ``automorphic forms'' on $G$. This correspondance is interesting in both directions: the analytic side is easier to grasp to prove theoretical results (in particular asymptotic results, as we will see below) but on the other hand the combinatorial side allows for exact computation of the cohomology groups (this has been used for example to experimentally check special cases of Langlands functoriality, as in \cite{GY}).

The torsion part of the cohomology is somewhat less accessible from both the combinatorial and analytic viewpoint. On the other hand it is in certain cases of greater interest than the characteristic zero cohomology. In what follows we will be exclusively interested in arithmetic subgroups of the Lie group $\SL_2(\CC)$. In this case, if $\Gamma$ is torsion-free, it acts freely and properly discontinuously on the hyperbolic space $\HH^3$ and the associated manifold $M = \Gamma \bs \HH^3$ has finite Riemannian volume. It has been observed that very often we have $H^1(M; \CC) = 0$. On the other hand the torsion part tends to be very large. For numerical illustrations of these points see \cite{Sengun}. There is also a form of functoriality for torsion classes which has been explored in \cite{BV}, \cite{Scholze} and \cite{CV}, which makes them of interest in number theory. 

In this paper we will be interested in asymptotic statements about the size of the torsion subgroup of $H^1(\Gamma_n; V_\ZZ)$, when $\Gamma_n$ is a sequence of lattices with covolume tending to infinity. The characteristic zero counterpart of this is the ``limit multiplicity problem'' which was studied by many people and (at least in the case of congruence subgroups) received a definitive solution in \cite{7S} and \cite{FL2}. We will be interested in the following conjecture (we also give a statement for nonarithmetic manifolds since we try to maintain an interest in the topological aspects of the problem). We will use the notion of a ``arithmetic $\Gamma$-module'', that is a lattice $V_\ZZ \subset V$ where $\rho : \SL_2(\CC) \to \GL(V)$ is a representation and $\rho(\Gamma)$ stabilises $V_\ZZ$.\footnote{These exist for nontrivial $\rho$ if and only if $\Gamma$ is arithmetic, on the other hand this includes the case of trivial coefficients which is the most important for topologists.} This conjecture first appeared in print in \cite{BV}, in the arithmetic setting, but in the topological case Thang Le had independently formulated it (and dubbed it ``topological volume conjecture'', in analogy with the Volume Conjecture in quantum topology).

\begin{conjstar} \label{conj}
  If $M = \Gamma \bs \HH^3$ is a closed or cusped hyperbolic 3--manifold then there exists a sequence of subgroups $\Gamma < \Gamma_1 < \cdots < \Gamma_n < \cdots$ with $\bigcap_n \Gamma_n = \{ 1 \}$ and
    \begin{equation} \label{exp_growth}
    \lim_{n \to +\infty} \frac {\log |H^1(\Gamma_n; V_\ZZ)_\tors|}{[\Gamma : \Gamma_n]} = \vol(M) c(V). 
    \end{equation}
    If $\Gamma$ is an arithmetic subgroup of $\SL_2(\CC)$ and $\Gamma_n$ is a sequence of pairwise distinct congruence subgroups of $\Gamma$ then \eqref{exp_growth} holds for $\Gamma_n$. 
\end{conjstar}

The constant $c(V)$ equals $-t^{(2)}(V)$ where $t^{(2)}(V)$ is the $L^2$-torsion associated to $V$, see \cite{BV} or \cite{volI}. For the trivial representation it equals $-1/(6\pi)$, for the adjoint representation $-13/(6\pi)$. It is computed in full generality in \cite{BV}.

The conjecture is completely open for trivial coefficients. There is a certain amount of computational evidence for a positive answer, see the tables in M.H. \c Seng\"un's \cite{Sengun} and the graphs in Section 4 of J. Brock and N. Dunfield's \cite{BrDu}. For some coefficient systems--including the adjoint representation--the limit is proved to hold in \cite{BV} and \cite{7S} for a cocompact lattice $\Gamma$ (see also \cite[Section 6.1]{thesis}). For trivial coefficients the upper bound on the upper limit in \eqref{exp_growth} was established by Thang Le \cite{Le} (the proof is purely topological and hence works for non-necessarily arithmetic lattices). See also \cite{BSV} for some related results in the case of trivial coefficients.

In this paper we will consider the case where $\Gamma$ is a Bianchi group, i.e. there is an imaginary quadratic field $F$ such that $\Gamma=\SL_2(\so_F)$, and deal only with nontrivial coefficients as in \cite{BV}. It is well-known that these groups represent all commensurability classes of arithmetic nonuniform lattices in $\SL_2(\CC)$. We will be concerned in the upper limit in \ref{exp_growth}. We do not manage to deal with all sequences of congruence subgroups of such a $\Gamma$ (see \ref{fail!} below) and we do not adress here the question of dealing with more general sequences of commensurable congruence groups. Also we do not prove that the torsion actually has an exponential growth, which is the most interesting part of the conjecture. This exponential growth---the fact that the limit inferior of the sequence $\log|H_1|/\vol$ is positive---is established for certain sequences by work of the author \cite[Section 6.5]{thesis} and independent work of J. Pfaff \cite{Pfafftors}. However, the method used in the present paper, which is different from those in these two references, gives a clear way to establishing the correct exponential growth rate---it is only because of certain number-theoretical complications that we were not able to get a complet proof. We will explain this in more detail later, for the moment let us state our main theorem. 

\begin{theostar} \label{Main}
Let $\Gamma$ be a Bianchi group, $\Gamma_n$ a cusp-uniform sequence of torsion-free congruence subgroups and $M_n=\Gamma_n\bs\HH^3$. Let $\rho,V$ be a real representation of $\SL_2(\CC)$ which is strongly acyclic and $V_\ZZ$ a lattice in $V$ preserved by $\Gamma$. If $V$ is strongly acyclic then we have 
$$
\lim_{n\to\infty} \left( \frac{\log|H_1(\Gamma_n;V_\ZZ)_\tors|}{\vol M_n} \right) = \lim_{n\to\infty} \left( \frac{\log|H^2(\Gamma_n;V_\ZZ)_\tors|}{\vol M_n} \right) \le -t^{(2)}(V). 
$$
\end{theostar}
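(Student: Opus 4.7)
The plan is to use the analytic--torsion asymptotics of \cite{volI} as a black box and convert them into a statement about integral homology by means of the Cheeger--M\"uller equality in the cusped setting, together with a careful analysis of regulators coming from the cusps.

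\medskip

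First I would combine the limit $\lim_n \log T(M_n;V)/\vol M_n = t^{(2)}(V)$ from \cite{volI} with the equality between analytic and Reidemeister torsion for finite--volume cusped hyperbolic three--manifolds. For a suitable smooth triangulation of each $M_n$ this produces
$$
\log T(M_n;V) = \log \tau_{\mathrm{Reid}}(M_n;V_\ZZ) + B_n,
$$
where $B_n$ is an explicit correction concentrated at the cusps. The cusp--uniformity hypothesis guarantees that every cusp of $M_n$ is modelled on a fixed finite list of Euclidean cross--sections, so that $B_n$ is bounded by a constant times the number of cusps, and in particular is $o(\vol M_n)$.

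\medskip

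Next I would expand combinatorial torsion in terms of homology,
$$
\log\tau_{\mathrm{Reid}}(M_n;V_\ZZ) = \sum_{i}(-1)^{i+1}\log|H_i(M_n;V_\ZZ)_\tors| + \sum_{i}(-1)^{i}\log R_i,
$$
where $R_i$ is the covolume of $H_i(M_n;V_\ZZ)_\free$ inside $H_i(M_n;V_\RR)$ measured in the $L^2$ inner product transported from the fibres of $V$. Strong acyclicity kills $H_0$ and $H_3$, and Poincar\'e--Lefschetz duality combined with the universal coefficient theorem identifies $|H_1(M_n;V_\ZZ)_\tors|$ with $|H^2(M_n;V_\ZZ)_\tors|$; in particular the two quantities appearing in the conclusion coincide automatically, so it is enough to control one of them.

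\medskip

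The main obstacle, and the place where the bulk of the work sits, is to show $\log R_i = o(\vol M_n)$ for $i=1,2$. The free parts of $H_1$ and $H^2$ originate from Eisenstein cohomology attached to the cusps; strong acyclicity restricts the weights of $V$ that can contribute, and cusp--uniformity bounds the number of cusps, so the rank of the free part is at worst linear, and in fact sublinear, in $\vol M_n$. The actual covolume has to be controlled by producing an explicit basis of Eisenstein classes from the fixed cusp models and estimating their periods against an integral basis of cellular chains; the resulting bound should be polynomial in $\vol M_n$ and therefore logarithmically negligible. Once all these ingredients are in place the limit superior and the limit inferior of $\log|H_i(M_n;V_\ZZ)_\tors|/\vol M_n$ agree and equal $-t^{(2)}(V)$, proving the theorem. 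I expect the regulator bound to be the genuinely difficult step: it is already subtle for cocompact $\Gamma$ (cf.~\cite{BV}) and in our setting requires in addition a uniform control on the Eisenstein periods at the boundary tori, which is exactly what cusp--uniformity is designed to provide.
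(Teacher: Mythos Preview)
Your outline captures the overall shape of the argument, but several steps are either wrong or conceal the genuine work.

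\medskip

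\textbf{Cheeger--M\"uller.} There is no off-the-shelf Cheeger--M\"uller equality for cusped hyperbolic three--manifolds that one can cite as a black box with a simple ``cusp correction'' $B_n$. The paper instead truncates each $M_n$ to a compact manifold with boundary $M_n^{Y^n}$, uses the Cheeger--M\"uller theorem for manifolds with boundary there, and then proves an \emph{asymptotic} equality between $\tau_\abs(M_n^{Y^n};V)$ and a newly defined Reidemeister torsion $\tau(M_n;V_\ZZ)$ of the open manifold. The inner products on $H^p(M_n;V_\CC)$ entering into $\tau(M_n;V_\ZZ)$ are \emph{defined} via regularized $L^2$-norms of Eisenstein series (there are no square-integrable harmonic representatives), and the comparison with $\tau_\abs$ of the truncation requires choosing the truncation height carefully in terms of the level and invoking the estimates on intertwining operators. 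Your $B_n = O(h_n)$ is not justified by anything in the literature.

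\medskip

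\textbf{Missing torsion terms.} Strong acyclicity does \emph{not} kill $H_0(M_n;V_\ZZ)$: it is a finite group (the coinvariants $V_\ZZ/(\Gamma_n-1)V_\ZZ$), and one has to prove $\log|H_0(M_n;V_\ZZ)| = o(\vol M_n)$ separately. More seriously, your alternating sum contains a term $\pm\log|H_2(M_n;V_\ZZ)_\tors|$ (equivalently $|H^1(M_n;V_\ZZ)_\tors|$ in the paper's cohomological setup) with the opposite sign to the one you want, and you never address it. Showing this ``wrong-degree'' torsion is subexponential is a genuine step (the paper's Lemma on $H^1_\tors$), proved via the long exact sequence of the pair $(\ovl M_n,\pl\ovl M_n)$, Poincar\'e duality, and the $H_0$ bound.

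\medskip

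\textbf{Poincar\'e duality.} For a manifold with boundary, Poincar\'e--Lefschetz duality gives $H_1(M_n;V_\ZZ)\cong H^2(\ovl M_n,\pl\ovl M_n;V_\ZZ')$, not $H^2(M_n;V_\ZZ)$, and universal coefficients introduces the dual lattice $V_\ZZ'$. So the two torsion orders in the statement do \emph{not} coincide automatically; passing from one to the other needs either the long exact sequence argument with a bound on the index of $i_*H_1(\pl\ovl M_n)$ in $H_1(M_n)$, or universal coefficients applied to $V_\ZZ'$ together with the full result for that lattice.

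\medskip

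\textbf{Regulators.} You are right that this is the hard part, but cusp-uniformity alone does not give the bound. The upper bound on $\vol H^1(M_n;V_\ZZ)_\free$ requires controlling the denominators of Eisenstein cohomology classes, and this in turn rests on an \emph{arithmetic} input: Damerell's theorem on the algebraicity and bounded denominators of special values $L(\chi,s_V^1)$, $L(\chi,s_V^1-1)$ of Hecke $L$-functions for characters of conductor dividing the level. The polynomial-in-volume bound you hope for does hold, but it is not a consequence of the fixed cusp geometry; it comes from number theory.
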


Strong acyclicity of representations was introduced in \cite{BV}, it means that the Hodge Laplace operators with coefficients in the local system induced by $\rho$ have a uniform spectral gap for all hyperbolic manifolds; it was shown there to hold for all representations that are not fixed by the Cartan involution of $\SL_2(\CC)$, in particular its nontrivial complex representations. Cusp-uniformity means that the cross-sections of all cusps of all $M_n$ form a relatively compact subset of the moduli space $\PSL_2(\ZZ)\bs\HH^2$ of Euclidean tori. There are obvious sequences of congruence covers which are not cusp-uniform. The proofs of \cite{volI} actually apply not only to cusp-uniform sequences but to all BS-convergent sequences which satisfy a less restrictive condition on the geometry of their cusps, \eqref{square!} below. However even this more relaxed hypothesis fails for some congruence sequences (see \ref{fail!} below) and this raises the question of whether Question \ref{conj} actually has an affirmative answer in these cases. Examples of sequences to which our result does apply include the following congruence subgroups, which are all cusp-uniform (see \ref{cusps} below): 
\begin{equation} \label{ppal}
\begin{split}
\Gamma(\fri) &= \left\{\begin{pmatrix}a&b\\c&d\end{pmatrix}\in \SL_2(\so_F)\: :\: b,c\in\fri,\, a,d\in 1+\fri \right\} \\
\Gamma_1(\fri) &= \left\{\begin{pmatrix}a&b\\c&d\end{pmatrix}\in \SL_2(\so_F)\: : \: c\in\fri,\, a,d\in 1+\fri \right\} \\
\Gamma_0(\fri) &= \left\{\begin{pmatrix}a&b\\c&d\end{pmatrix}\in \SL_2(\so_F),\, c\in\fri \right\}.
\end{split}
\end{equation}
Actually the $\Gamma_0(\fri)$ contain torsion for all $\fri$ but we can apply Theorem \ref{Main} to the sequence $\Gamma_0(\fri)\cap\Gamma'$ where $\Gamma'\subset\Gamma$ is a torsion-free congruence subgroup. As remarked below \ref{orbi} our scheme of proof applies to orbifolds except at one point.


\subsection{A few words about the proof}

Let $\Gamma,\Gamma_n,\rho,V_\ZZ$ be as in the statement of Theorem \ref{Main}. We refer to the introduction of \cite{volI} for more detailed information on how the scheme of proof of \cite{BV} can be adapted to the setting of nonuniform arithmetic lattices. We recall here that the output of Theorems A and B in this paper is that for a Benjamini--Schramm convergent\footnote{See \cite[Definition 1.1]{7S}; we recall that it means that for any $R>0$ the volume of the $R$-thin part $(M_n)_{\le R}$ is an $o(\vol M_n)$.}, cusp-uniform sequence $M_n$ of finite volume hyperbolic three--manifolds, under a certain technical assumption on the continuous part of the spectra of the $M_n$, we have the limit 
\begin{equation} \label{tauabslim}
\lim_{n\to\infty} \frac{\log\tau_\abs(M_n^{Y^n};V)}{\vol M_n} = t^{(2)}(V) 
\end{equation}
where $M_n^{Y^n}$ are obtained from $M_n$ by cutting off the cusps\footnote{One needs to specify how to proceed to choose at which height the cutting is performed, this question is adressed in the quoted paper.} along horospheres at a certain height $Y^n$. The first task we need to complete is to check that the hypotheses of these theorems are satisfied by cusp--uniform sequences of congruence manifolds. Regarding the BS-convergence we prove a quantitative result valid for any sequence of congruence subgroups (Theorem \ref{BSbianchi} below---we recall that $(M_n)_{\le R}$ denotes, as is usual, the $R$-thin part of $M_n$). 

\begin{theostar} \label{BSmain}
Let $\Gamma_n$ be a sequence of congruence of a Bianchi group; then the manifolds $M_n=\Gamma_n\bs\HH^3$ are BS-convergent to $\HH^3$ and we have in fact that there exists a $\delta>0$ such that for all $R>0$
$$
\vol(M_n)_{\le R} \le e^{CR}(\vol M_n)^{1-\delta}
$$
where $C$ depends on $\Gamma$. 
\end{theostar}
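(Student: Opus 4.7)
The plan is to establish the quantitative thin-part bound via the Margulis thick/thin decomposition, writing $(M_n)_{\le R}$ as the disjoint union of cuspidal neighborhoods and Margulis tubes around closed geodesics of length at most $2R$, and bounding each contribution by $C_R (\vol M_n)^{1-\delta}$ for a uniform $\delta > 0$. Throughout let $\fri_n$ be the largest ideal of $\so_F$ with $\Gamma(\fri_n) \subseteq \Gamma_n$, so that $[\Gamma:\Gamma_n] \le |\SL_2(\so_F/\fri_n)| \sim N(\fri_n)^3$.

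For the tubular part, a closed geodesic of length at most $2R$ in $M_n$ corresponds to a primitive $\Gamma_n$-conjugacy class of a loxodromic $\gamma \in \Gamma_n$ with $|\tr(\gamma)|_\infty \le 2\cosh(R)$. Since $\tr(\gamma) \in \so_F$, only finitely many traces arise and such $\gamma$ lie in finitely many $\Gamma$-conjugacy classes $[\gamma_1], \ldots, [\gamma_{k(R)}]$. For each fixed $\gamma_j$ with primitive centralizer generator $\tilde\gamma_j$, the $\Gamma_n$-conjugacy classes inside $[\gamma_j]_\Gamma \cap \Gamma_n$ are parametrized by orbits of $Z_\Gamma(\gamma_j)$ on the set of cosets $g\Gamma_n$ satisfying $g^{-1}\gamma_j g \in \Gamma_n$; the size of such an orbit equals the smallest $k \ge 1$ with $(g^{-1}\tilde\gamma_j g)^k \in \Gamma_n$, and so the number of orbits is bounded by $[\Gamma:\Gamma_n]/\min_g m_n(g^{-1}\tilde\gamma_j g)$, where $m_n(\alpha)$ denotes this minimal power. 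Each tube has volume at most a universal $V(R)$.

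For the cuspidal part, the $R$-thin neighborhood of a cusp with cross-sectional flat torus of area $A_c$ and systole $s_c$ has volume $\sim R^2 A_c/s_c^2$, and $\sum_c A_c \le C \vol(M_n)$ by a disjoint horoball argument. For cusps whose unipotent stabilizer radical in $\Gamma_n$ coincides with that in $\Gamma(\fri_n)$ (a sublattice of $\so_F$ isomorphic to a fractional ideal $\fra$ with $N(\fra) \gtrsim N(\fri_n)$), the key arithmetic inequality $|\alpha|^2 = N(\alpha) \ge N(\fra)$ for every nonzero $\alpha \in \fra$ gives $s_c^2 \gtrsim N(\fri_n)$, whence these contribute at most $C_R \vol(M_n)/N(\fri_n) \le C_R (\vol M_n)^{2/3}$. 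The remaining cusps require a combinatorial analysis of the stabilizers in the subgroup $\Gamma_n/\Gamma(\fri_n)$ of $\SL_2(\so_F/\fri_n)$ to ensure their total thin volume is sublinear in $\vol(M_n)$.

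The main obstacle is the polynomial lower bound $m_n(\tilde\gamma) \gtrsim N(\fri_n)^{\alpha}$ for some $\alpha > 0$, uniformly over the sequence and over conjugates of each fixed non-torsion $\tilde\gamma \in \Gamma$. Since $\tilde\gamma$ is loxodromic its eigenvalues $\lambda^{\pm 1}$ are algebraic units in $F(\lambda)$ that are not roots of unity; the reduction of $\tilde\gamma$ modulo a prime $\frp$ has order dividing $N(\frp)^2 - 1$ and typically of this size, but ruling out anomalously small orders for too many primes dividing $\fri_n$ requires height-bound type arguments on algebraic integers modulo primes. Combined with the cuspidal estimate this yields the stated inequality with $\delta = \min(\alpha/3, 1/3)$.
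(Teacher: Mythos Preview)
Your overall decomposition into tube and cusp contributions matches the paper's Lemma~\ref{BSconv}, but both halves of your argument have genuine gaps.

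\medskip

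\textbf{Loxodromic part.} Your bound $N_R(M_n)\le k(R)\cdot[\Gamma:\Gamma_n]/\min_g m_n(g^{-1}\tilde\gamma_j g)$ requires $\min_g m_n \gtrsim N(\fri_n)^\alpha$, and this is false for non-normal congruence subgroups. Take $\Gamma_n=\Gamma_0(\frp_n)$ for primes $\frp_n$ such that $\tr(\tilde\gamma_j)^2-4$ is a square modulo $\frp_n$ (half of all primes). Then $\tilde\gamma_j$ has an eigenvector modulo $\frp_n$, and by strong approximation there exists $g\in\Gamma$ with $g^{-1}\tilde\gamma_j g\in\Gamma_0(\frp_n)$, so $m_n(g^{-1}\tilde\gamma_j g)=1$ and your bound degenerates to $[\Gamma:\Gamma_n]\asymp\vol M_n$. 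The paper instead bounds the number of \emph{fixed points} of $\gamma_j$ on $K_f/K_f'$ directly: this equals the number of lifts of the geodesic, and \cite[Theorem~1.11]{7S} gives $|\{gK_f': g^{-1}\gamma_j g\in K_f'\}|\le C[K_f:K_f']^{1-\delta}$. In the $\Gamma_0(\frp)$ example this fixed-point set has size at most $2$ (the eigenspaces in $\PP^1(\so_F/\frp)$), not $\asymp N(\frp)$.

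\medskip

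\textbf{Cuspidal part.} Your split into cusps whose unipotent stabilizer ``coincides with that in $\Gamma(\fri_n)$'' versus the rest puts essentially all the difficulty into the second class, for which you only say a ``combinatorial analysis'' is needed. This analysis is precisely the main technical work of the paper: one must show $\sum_j \alpha_2(\Lambda_{n,j})/\alpha_1(\Lambda_{n,j})\ll [K_f:K_f']^{1-\delta}$, which is reduced prime by prime to controlling, for each $K_p'\subset K_p$, the sum $S_p$ in~\eqref{Sp'}. The key estimate $d_l\le p^{17k/9}$ (equation~\eqref{dl}) hinges on Lemma~\ref{brzouk}: a proper subgroup of $\SL_2(\so_F/\frp)$ cannot contain more than $p+1$ pairwise non-commuting unipotents. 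None of this is captured by your systole bound on ``good'' cusps, and your estimate $\sum_c A_c\le C\vol M_n$ is too weak since the quantity to control is $\sum_c A_c/s_c^2\asymp\sum_j\alpha_2/\alpha_1$, not $\sum_c A_c$.
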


We note that this result is much simpler to prove in the cusp-uniform case (see \ref{remBS}). For the estimates on intertwining operators as well our proof goes well for any sequence of congruence subgroups. It is when we study the trace formula along the sequence $M_n$ that our arguments go awry, as some summands of the geometric side seem to diverge as $n\to\infty$. 

Then we want to use \eqref{tauabslim} to study cohomological torsion. The Reidemeister torsion $\tau_\abs(M_n^{Y^n};V)$ is related to the torsion in $H^2(M_n;V_\ZZ)$, in fact it is defined as 
$$
\tau_\abs(M_n^{Y^n};V) = \frac{R^1(M_n^{Y^n})}{R^2(M_n^{Y^n})} \cdot \frac{|H^1(M_n;V_\ZZ)_\tors|}{|H^2(M_n;V_\ZZ)_\tors|}
$$
where $R^p(M_n^{Y^n})$ is the covolume of the lattice $H^p(M_n;V_\ZZ)_\free$ in the space of harmonic forms satisfying absolute conditions on the boundary of $M_n^{Y^n}$. The second thing to be done is to relate these to terms defined on the manifolds $M_n$, using the description of $H^*(M_n;V_\CC)$ by non-cuspidal automorphic forms (namely, harmonic Eisenstein series). We then get a limit 
$$
\lim_{n\to\infty} \frac{\log\tau(M_n;V)}{\vol M_n} = t^{(2)}(V) 
$$
(see \eqref{defRtors} for the precise definition of $\tau$) and it remains to show that the terms $|H^1(M_n;V_\ZZ)_\tors|$ and $R^2(M_n^{Y^n})$ disappear in the limit and that $\liminf R^1(M_n^{Y^n}) \ge 0$. The proofs of these claims use elementary manipulations with the long exact sequence for the Borel-Serre compactification $\ovl M_n$ and its boundary and lemmas on the boundary cohomology. 

This is also where our proof encounters an obstruction to proving the full conjecture, as for the term $R^1(M_n^{Y^n})$ we are not able to get that its limit inferior is positive. For this we would need statements on the integrality of Eisenstein classes which we were not able to establish. We can still isolate a number-theoretical statement which would ensure this as stated in the following proposition. 

\begin{propstar} \label{hypothetic}
  Let $F$ be a quadratic field. For $\chi$ a Hecke character with conductor $\fri_\chi$ we denote by $L(\chi, \cdot)$ the associated L-function and by $L^{\mathrm{alg}}(\chi, \cdot)$ the normalisation which takes algebraic values at half-integers (see \cite[?]{CV}). Assume that there exists $m$ such that
  \[
  \forall \chi, \forall s \in \frac 1 2 \ZZ : |L^{\mathrm{alg}}(\chi, s)|_{\ovl\QQ/\QQ} \le |\fri_\chi|^m.
  \]
  Then we can change the $\limsup$ in Theorem \ref{Main} to a limit. 
\end{propstar}

We will not give a complete proof of this statement here. It is available in the old Arxiv version of this paper (which claims to prove the unconditional statement but in fact proves only this one).


\subsection{Limit multiplicities}

Another problem about sequences of congruence groups is the question of limit multiplicities for unitary representations of $\SL_2(\CC)$. For such a representation $\pi$ on a Hilbert space $\sh_\pi$ and a lattice $\Gamma$ in $\SL_2(\CC)$ one defines its multiplicity $m(\pi,\Gamma)$ to be the largest integer $m$ such that there is a $\SL_2(\CC)$-equivariant embedding of $\sh_\pi^m$ into $L^2(\Gamma\bs\SL_2(\CC))$. The question of limit multiplicities is then to determine the limit of the sequence $m(\pi,\Gamma_n)/\vol(\Gamma_n\bs\HH^3)$ as $\Gamma_n$ ranges over the congruence subgroups of some arithmetic lattice. This question is of particular interest when $\pi$ is a discrete series (when the limit is expected to be positive), and it has been considered in the uniform case by D.L. De George and N.R. Wallach in \cite{dGW}, by G. Savin \cite{Savin} in the nonuniform case. In the case we consider there are no discrete series and thus we expect that the limit multiplicity of any representation will be 0. 

A more precise question to ask is the following: the set $\wdh G$ of irreducible unitary representations (up to isomorphism) of $G=\SL_2(\CC)$ is endowed with a Borel measure $\nu^G$ (the Plancherel measure of Harish-Chandra), and for each lattice $\Gamma\subset G$ the multiplicities in $L^2(\Gamma\bs G)$ define an atomic measure $\nu^\Gamma$. For a congruence sequence $\Gamma_n$ and a Borel set $A\subset \wdh G$, do we have $\nu^{\Gamma_n}(A)\sim\vol(\Gamma_n\bs\HH^3)\nu^G(A)$ as $n$ tends to infinity? In the case where the $\Gamma_n$ are congruence subgroups of a cocompact lattice this is shown to hold in \cite[Section 6]{7S}. The non-compact case is much harder, but T. Finis, E. Lapid et W. M\"uller manage in \cite{FLM} to deal with principal congruence subgroups in all groups $\GL_n/E$, $E$ a number field, and this was generalised to all congruence subgroups in \cite{FL1}, \cite{FL2}. Here we will, much more modestly, deal only with $\SL_2$ over an imaginary quadratic field (note that the first prepublication of this results predates \cite{FL1}). 

\begin{theostar} \label{multlim}
Let $S$ be a regular Borel set in the unitary dual of $G=\SL_2(\CC)$, $\Gamma$ a Bianchi group and $\Gamma_n$ a cusp-uniform sequence of congruence subgroups. Then 
$$
\lim_{n\to\infty} \frac{\sum_{\pi\in S} m(\pi,\Gamma_n)}{\vol M_n} = \nu^G(S)
$$
\end{theostar}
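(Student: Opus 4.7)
The plan is to combine the Arthur--Selberg trace formula with Sauvageot's density principle. Sauvageot's theorem reduces the statement for a regular Borel set $S$ (regularity here being $\nu^G(\partial S)=0$) to the assertion that for every smooth compactly supported function $f$ on $G=\SL_2(\CC)$,
\begin{equation*}
\frac{1}{\vol M_n}\sum_{\pi\in\wdh G}m(\pi,\Gamma_n)\tr\pi(f) \longrightarrow \int_{\wdh G}\tr\pi(f)\,d\nu^G(\pi)=f(1),
\end{equation*}
the right-hand equality being the Plancherel formula. The left-hand side equals $(\vol M_n)^{-1}\otr R_{\Gamma_n,\disc}(f)$; recall that $\SL_2(\CC)$ has no discrete series, so the discrete part of $L^2(\Gamma_n\bs G)$ is the cuspidal part plus the trivial representation (which contributes $O(1)$), and $\nu^G$ is supported on the unitary principal series.

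By the trace formula for $\SL_2$ one has $\otr R_{\Gamma_n,\disc}(f)=J_{\mathrm{geom}}(f)-J_{\eis}(f)$. For the geometric side, when the support of $f$ is contained in a sufficiently small neighbourhood of the identity, the unit contributes the main term $\vol(M_n)f(1)$, while the short loxodromic contributions are bounded by $\vol(M_n)_{\le R}$ for some $R$ depending on $\operatorname{supp} f$. Theorem \ref{BSmain} gives a power-saving bound on this thin-part volume, so after division by $\vol M_n$ it vanishes in the limit. The remaining terms of $J_{\mathrm{geom}}$ are the unipotent orbital integrals attached to the cusps of $M_n$; these are uniformly bounded in $n$ precisely under the cusp-uniformity hypothesis, which controls the geometry of the cusp cross-sections. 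It follows that $J_{\mathrm{geom}}(f)/\vol M_n\to f(1)$.

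It remains to prove $J_{\eis}(f)=o(\vol M_n)$. The Eisenstein term decomposes, cusp by cusp, as an integral along the unitary axis of $\tr$ against the logarithmic derivative of the scattering matrix. For Bianchi congruence subgroups these scattering entries are explicit ratios of Hecke $L$-functions whose conductors grow with the level, and the intertwining operator estimates established in \cite{volI}---valid, as noted in the introduction, for any congruence sequence---yield exactly the required $o(\vol M_n)$ bound. The main obstacle is the careful accounting of this Eisenstein contribution: it is the analogue in our setting of the core technical estimate of Finis--Lapid--M\"uller \cite{FLM}, considerably simplified here by the fact that $\SL_2$ over an imaginary quadratic field has only a one-dimensional Levi in a single proper parabolic class, so that the intertwining operators and their logarithmic derivatives are fully explicit in terms of Hecke $L$-functions.
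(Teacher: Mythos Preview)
Your overall architecture is sound and close to the paper's, but there is one substantive misstep and a structural difference worth noting.

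The misstep is the attribution of the intertwining-operator estimates to \cite{volI}. That reference provides the abstract machinery (the regularized trace and its convergence under BS-convergence plus cusp-uniformity), but the bound on the logarithmic derivative of the scattering matrix---your ``$J_{\eis}(f)=o(\vol M_n)$''---is precisely the new arithmetic input of the present paper, Proposition~\ref{estenta} in Section~\ref{estent}. It is proved here by explicit local computations and the analytic properties of Hecke $L$-functions, and without it the argument does not close. So your sketch is not self-contained as written: the sentence ``the intertwining operator estimates established in \cite{volI}'' should point instead to Section~\ref{estent}, and you should acknowledge that this estimate is where the work lies.

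Organizationally the paper takes a slightly different route. Rather than invoking Sauvageot and working directly with $f\in C_c^\infty(G)$, it first reduces the representation-theoretic statement to a statement about Laplace eigenvalues on $p$-forms with coefficients (Theorem~\ref{multlimvp}), using the elementary fact that an irreducible unitary representation of $\SL_2(\CC)$ is determined by its Casimir eigenvalue together with its $\SU(2)$-types (\S\ref{repr}). It then applies the regularized trace formalism of \cite{volI}: Proposition~\ref{cvtr} gives convergence of the regularized trace $\otr_R\phi(\Delta^p[M_n])$ (this packages your geometric-side analysis), and the gap between $\otr_R$ and the genuine cuspidal trace is controlled termwise from the spectral expansion~\eqref{spec} by Proposition~\ref{estenta} together with Lemma~\ref{nbcusps}. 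Your direct trace-formula decomposition and Sauvageot reduction are a legitimate alternative and arguably more conceptual; the paper's route has the advantage of recycling the analysis already carried out in \cite{volI} rather than redoing the geometric side from scratch.
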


The question of limit multiplicities is related to the growth of Betti numbers in sequences of congruence subgroups via Matsushima's formula and the Hodge-de Rham theorem. The latter has been studied in greater generality (for sequences of finite covers of finite CW-complexes) by W. L\"uck in \cite{Luck2} and M. Farber in \cite{Farber}. Theorem 0.3 of the latter paper together with Theorem 1.12 of \cite{7S} imply that in a sequence of congruence subgroups of an arithmetic lattice the Betti numbers are sublinear in the volume in all degrees except possibly in the middle one where the growth is linear in the volume if the group has discrete series. Another proof of this for cocompact lattices, which actually yields explicit sublinear bounds in certain degrees, is also given in \cite[Section 7]{7S}. For non-compact hyperbolic three--manifolds we dealt with this problem in \cite{volI}; a corollary of \cite[Proposition C]{volI} and of Theorem \ref{BSmain} is then:

\begin{corstar} \label{betti}
Let $\Gamma_n$ be a sequence of torsion-free congruence subgroups of a Bianchi group $\Gamma$, then we have:
\begin{equation*}
\frac{b_1(\Gamma_n)}{\vol M_n} \xrightarrow[n\to\infty]{} 0. 
\end{equation*}
\end{corstar}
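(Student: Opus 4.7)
The plan is to combine the two ingredients flagged in the statement of the corollary: Theorem \ref{BSmain}, which gives Benjamini--Schramm convergence of $M_n$ to $\HH^3$ with a quantitative polynomial rate, and \cite[Proposition C]{volI}, which converts such a BS-convergence, together with the mild geometric control on the cusps that comes with a polynomial rate, into sublinearity of the first Betti number along a sequence of finite-volume hyperbolic three--manifolds. So the first step is simply to verify that the bound $\vol(M_n)_{\le R} \le C(\vol M_n)^{1-\delta}$ provided by Theorem \ref{BSmain} matches the hypothesis of the cited Proposition C, which is a direct comparison of statements.

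To make the role of each input transparent one may decompose
$$
H^1(\Gamma_n;\CC) = H^1_\cusp(\Gamma_n;\CC) \oplus H^1_\eis(\Gamma_n;\CC)
$$
via the Borel--Serre/Harder splitting. The cuspidal part is exactly what Proposition C of \cite{volI} is designed to handle: the first $L^2$-Betti number of $\HH^3$ vanishes, so BS-convergence yields $\dim H^1_\cusp(\Gamma_n;\CC) = o(\vol M_n)$, provided the cusp geometry is controlled uniformly enough for the heat-kernel/Plancherel estimates to survive the transition from the cocompact case of \cite{7S} to the finite-volume one. The Eisenstein part has dimension bounded by a constant times the number of cusps of $M_n$. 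For a congruence subgroup $\Gamma_n \supset \Gamma(\fri_n)$, this number is polynomial in $N(\fri_n)$ of strictly smaller degree than $\vol M_n$, which grows like $N(\fri_n)^3$, so the Eisenstein contribution is $o(\vol M_n)$ as well. Taking the two contributions together yields the claim.

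The main obstacle, were one to try to prove the corollary from scratch rather than quoting Proposition C, would be the cuspidal estimate: one has to make the BS-convergence sufficiently quantitative, and the continuous/Eisenstein spectrum of the $M_n$ sufficiently tame, so as to adapt the argument of \cite{7S} from the cocompact to the finite-volume setting. Theorem \ref{BSmain} is calibrated precisely so as to make this adaptation go through; once it is in hand, the rest of the proof of Corollary \ref{betti} is essentially the bookkeeping outlined above.
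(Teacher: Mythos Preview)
Your overall strategy matches the paper's: invoke Theorem \ref{BSmain} to get quantitative BS-convergence, then feed this into \cite[Proposition C]{volI}. The cuspidal/Eisenstein decomposition you sketch is presumably what underlies that proposition, so you are unpacking rather than departing from the paper's route.

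There is, however, a genuine slip in your handling of the Eisenstein part. You assert that for $\Gamma_n\supset\Gamma(\fri_n)$ the volume $\vol M_n$ ``grows like $N(\fri_n)^3$''. This holds for the principal subgroups $\Gamma(\fri_n)$ themselves, but for an arbitrary congruence subgroup of level $\fri_n$ one only has the lower bound $[\Gamma:\Gamma_n]\ge\frac13|\fri_n|^{1/3}$ (Lemma \ref{index-level}); the index can be far smaller than $|\fri_n|^3$. Meanwhile $h_n$ can be bounded above by the number of cusps of $\Gamma(\fri_n)$, which is of order $|\fri_n|^2$. So a comparison of ``polynomial degrees in $|\fri_n|$'' does not give $h_n=o(\vol M_n)$ for general congruence $\Gamma_n$. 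The correct bound is $h_n\ll[\Gamma:\Gamma_n]^{1-\delta}$, which is Lemma \ref{nbcusps} in the paper and is itself extracted from the unipotent half of Theorem \ref{BSbianchi}. Replace your polynomial-degree heuristic by that lemma and the Eisenstein contribution is indeed $o(\vol M_n)$; the rest of your argument then goes through.
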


Note that one of the proofs given in \cite{volI} is actually a very short and easy argument if one admits \cite[Theorem 1.8]{7S}.


\subsection{Some remarks}

\subsubsection{Sequences that are not cusp-uniform}
\label{fail!}

As noted there the results of \cite{volI} are valid under a slightly less restrictive condition than cusp-uniformity: it suffices that we have 
\begin{equation}
\sum_{j=1}^{h_n} \left(\frac{\alpha_2(\Lambda_{n,j})}{\alpha_1(\Lambda_{n,j})}\right)^2 \le (\vol M_n)^{1-\delta}
\label{square!}
\end{equation}
for some $\delta>0$, where $\Lambda_{n,j}$ are the Euclidean lattices associated to the $h_n$ cusps of $M_n$ (and $\alpha_1,\alpha_2$ are respectively the first and the second minima of the Euclidean norm on a lattice). This is clearly implied by cusp-uniformity in view of Lemma \ref{nbcusps}, and implies the unipotent part of BS-convergence. It is not hard to see that there are examples of congruence sequence which satisfy this condition but are not cusp-uniform. However, there are congruence sequences which do not satisfy \eqref{square!}, for example those associated to the subgroups $K_f^n$ which are the preimage in $K_f = \ovl{\SL_2(\so_F)}$ of $\SL_2(\ZZ/n)$ under the map $K_f\to K_f/K_f(n)\cong\SL_2(\so_F/(n))$: in this case there are $n$ cusps having $\alpha_1\asymp 1$ and $\alpha_2\asymp n$ and the index is about $n^3$. However I have no clue as to whether the limit multiplicities and approximation result should or not be valid for these sequences. 


\subsubsection{Orbifolds}
\label{orbi}
Theorems \ref{multlim} and \ref{BSmain} are valid for sequences of orbifolds as well (see \cite{thesis}). We have not included the necessary additions here in order to keep this paper to a reasonable length and because they are quite straightforward. The approximation for analytic torsion carries to this setting as well, but the Cheeger--M\"uller equality for manifolds with boundary which is one of the main ingredients in the proof of Theorem \ref{CMA} has not, to the best of my knowledge, been proven for orbifolds yet. As for the final steps of the proof of Theorem \ref{Main} they either remain identical (if the cuspidal subgroups are torsion-free) or are simplified by the presence of finite stabilizers for the cusps, which may kill the homology and the continuous spectrum. We will not adress this here, some details are given in \cite{thesis}.


\subsubsection{Trivial coefficients}
For the topologist or the group theorist the trivial local system is the most natural and interesting. The approximation of analytic $L^2$-torsion \cite[Theorem A]{volI}, \cite[Theorem 4.5]{BV} extends to that setting if one assumes that the small eigenvalues on forms have a distribution which is uniformly similar to that of the $L^2$-eigenvalues on $\HH^3$. Let us describe more precisely what this means. Let $M_n$ be a sequence of congruence covers of some arithmetic three--manifold, we know by Theorem \ref{multlim} that for $p=0,1$ the number $m_p([0,\delta];M_n) = (\sum_{\lambda\in[0,\delta]}m_p(\lambda;M_n)$ of eigenvalues of the Laplace operator on $p$-forms on $M_n$ in an interval $[0,\delta]$ behaves asymptotically as $m_p^{(2)}([0,\delta])\vol M_n$ where $m_p^{(2)}$ is the pushforward of the Plancherel measure. We would need to know that we have in fact a uniform decay of $m_p([0,\delta];M_n)/\vol M_n$ as $\delta\to 0$, for example 
\begin{equation}
\frac{m_p([0,\delta];M_n)}{\vol M_n} \le C\delta^c
\label{ptvp}
\end{equation}
for all $\delta>0$ small enough and some absolute $C,c$. We will now describe a (very) idealized situation in which \eqref{ptvp} would hold in a particularly nice form. Let $\alpha_p\in]0,\infty^+]$ be the $p$th Novikov-Shubin invariant of $\HH^3$ (see \cite[Chapters 2 and 5]{Luck}) then there would be an absolute constant $C>0$ such that for any $\delta>0$ small enough and any congruence hyperbolic three--manifold $M_n$ we have 
\begin{equation}
\frac{\sum_{\lambda\in[0,\delta]} m_p(\lambda;M_n)}{\vol M_n} \le C \delta^{\alpha_p}. 
\label{NSF}
\end{equation}
For functions we have $\alpha_0=\infty^+$ (meaning there is a spectral gap on $\HH^3$) and \eqref{ptvp} is known to hold in this case, and in fact in a much more general situation, by L. Clozel's solution of the ``Conjecture $\tau$'' \cite{tau}. For 1-forms $\alpha_1=1$ and one should probably not expect to prove \eqref{NSF} literally (or even for it to hold in this form). We ask the following question, which to the best of our knowledge is wide open (see \cite{Lipnowski_Stern} for some recent advances on the question). 

\begin{queststar}
  Does there exists $\lambda_0>0$ such that for any $\eps>0$ there is a $C_\eps>0$ such that for any congruence hyperbolic three--manifold $M$ and $\delta\le\lambda_0$ we have
  \[
  \frac{\sum_{\lambda\in[0,\delta]} m_1(\lambda;M)}{\vol M} \le C_\eps \delta^{1+\eps}\vol M \quad ?
  \]
(Or less precisely, does this hold for some exponent $c>0$ in place of $1+\eps$ on the right-hand side?)
\end{queststar}

A positive answer to this question is not enough to imply a positive answer to Question \ref{conj} as one still has to analyze the ``regulator'' terms in the Reidemeister torsion: see \cite[9.1]{BV}. In some cases the latter problem is dealt with in work of Bergeron--\c Seng\"un--Venkatesh \cite{BSV}.


\subsubsection{Non-arithmetic manifolds}

If $\Gamma$ is a non-arithmetic lattice in $\SL_2(\CC)$ then it is conjugated into $\SL_2(\so_E[a^{-1}])$ for some number field $E$ and algebraic integer $a$. Thus we can define its congruence covers (whose level will be coprime to $a$), and it can be proved (see \ref{remBS}) that they are BS-convergent to $\HH^3$. Question \ref{conj} still makes sense for trivial coefficients, but is not expected to always have a positive answer in that setting. It is actually expected that for some sequences of non-arithmetic covers with $\inj M_n\to+\infty$ the order of the torsion part of $H_1(M_n;\ZZ)$ does not satisfy \eqref{exp_growth}. We refer to \cite[9.1]{BV} and \cite{BrDu} for more complete discussion around these questions. 


\subsubsection{Sequences of noncommensurable lattices}

Let $M_n$ be a sequence of finite-volume hyperbolic three--manifolds such that $M_n$ BS-converges to $\HH^3$ and their Cheeger constants are uniformly bounded from below. Do we have 
$$
\lim_{n\to+\infty}\frac{\log T(M_n)}{\vol M_n} = \frac 1 {6\pi} \quad ?
$$
Here $T(M_n)$ is the Ray-Singer analytic torsion, regularized as in \cite{Park} if the $M_n$ have cusps. For compact manifolds this is Conjecture 1.4 in \cite{7S}, and for covers \cite[Conjecture 1.13]{BrDu}. Examples of such Benjamini--Schramm convergent sequences are given by noncommensurable arithmetic lattices, for example the sequence of Bianchi groups $\SL_2(\so_F)$ as the discriminant of the field $F$ goes to infinity. See \cite{raimbault, fraczyk}.

In the case of uniform lattices with trace fields having bounded degree it is easy to find natural sequences of coefficient modules of bounded rank satisfying \eqref{exp_growth}. 


\subsection{Outline}

Section \ref{not} introduces the background we use throughout the paper. In Section \ref{BS} we prove Theorem \ref{BSmain}, and in section \ref{estent} we estimate the norm of intertwining operators, thus completing the proof of Theorem \ref{multlim} and of \eqref{tauabslim}. Section \ref{CMA} completes the proof of the asymptotic Cheeger--M\"uller equality between analytic and Reidemeister torsions of the manifolds $M_n$. The final section \ref{endgame} analyses the individual behaviour of the terms in the Reidemeister torsion, finishing the proof of Theorem \ref{Main}.


\subsection{Acknowledgments}

This paper originates from my Ph.D. thesis \cite{thesis}, which was done under the supervision of Nicolas Bergeron, and it is a pleasure to thank him again for his guidance. During the completion of this work I was the recipient of a doctoral grant from the Universit\'e Pierre et Marie Curie. This version was mostly worked out during a stay at Stanford University and reworked while I was a postdoc at the Max-Planck Institut in Bonn. I am grateful to these institutions for their hospitality and to Akshay Venkatesh for inviting me and for various discussions around the topic of this paper.


\section{Notation and preliminaries} \label{not}

\subsection{Bianchi groups and congruence manifolds}

For this section we fix an imaginary quadratic field $F$ and let 
$\Gamma=\SL_2(\so_F)$ be the associated Bianchi group. We will denote by $\Ade_f$ the ring of finite ad\`eles of $F$. At infinity we fix the maximal compact subgroup $K_\infty$ of $\SL_2(\CC)$ to be $\SU(2)$; if $K_f'$ is a compact-open subgroup of $\SL_2(\Ade_f)$ we will adopt the convention of denoting by $K'$ the compact-open subgroup $K_\infty K_f'$ of $K_\infty\SL_2(\Ade_f)$.

\subsubsection{Congruence subgroups}

For any finite place $v$ of $F$ let $K_v$ be the closure of $\Gamma$ in $\SL_2(F_v)$ ; then $K_f = \prod_v K_v$ is the closure of $\Gamma$ in $\SL_2(\Ade_f)$. A congruence subgroup of $\Gamma$ is defined to be the intersection $\Gamma\cap K_f'$ where $K_f'$ is a compact-open subgroup of $K_f$. Let $\Gamma(\fri)$ be defined by \eqref{ppal} and $K_f(\fri)$ its closure in $K_f$; then $\Gamma(\fri)=\Gamma\cap K_f(\fri)$ so that $\Gamma(\fri)$ is indeed a congruence subgroup ; likewise, $\Gamma_0$ and $\Gamma_1(\fri)$ are ``congruence-closed'', i.e. they are equal to the intersection of their closure with $\SL_2(F)$. 

For a compact-open subgroup $K_f'\subset K_f$ we will denote by $\Gamma_{K'}=\SL_2(F)\cap K_f'$ the associated congruence lattice ; we define its level to be the largest $\fri$ such that $K_f(\fri)\subset K_f'$\footnote{This is well--defined as $K_f(\fri)K_f(\fri')=K_f(\frj)$ where $\frj=\gcd(\fri,\fri')$.}. Then the following fact is well-known (see \cite[Lemme 5.8]{thesis}).

\begin{lemma}
For any compact-open $K_f'\subset K_f$ we have 
$$
[K_f:K_f'] \ge \frac 1 3 |\fri|^{\frac 1 3}
$$
where $\fri$ is the level of $K_f'$. 
\label{index-level}
\end{lemma}


\subsubsection{Congruence manifolds}

For a compact-open $K_f'$ we denote by $M_{K'}$ the orbifold $\Gamma_{K'}\bs\HH^3$. The strong approximation theorem for $\SL_2$ (which in this case is a rather direct consequence of the Chinese remainder theorem) states that the subgroup $\SL_2(F)$ is dense in $\SL_2(\Ade_f)$, and it follows that we have an homeomorphism
\begin{equation}
M_{K'} \cong \SL_2(F)\bs\SL_2(\Ade)/K'.
\label{stra}
\end{equation}


\subsubsection{Unipotent subgroups} 
\label{unipotent}
Let $\N$ be a unipotent subgroup of $\SL_2$ defined over $F$, $\B$ its normalizer in $\SL_2$ and $\alpha$ the morphism from $\B/\N$ to the multiplicative group\footnote{Which is isomorphic to the automorphism group of $\N$ since the latter is itself isomorphic to the additive group.} given by the conjugacy action on $\N$. For any place $v$ of $F$ we have the Iwasawa decomposition 
$$
\SL_2(F_v) = \B(F_v) K_v , 
$$
and this yields also that $\SL_2(\Ade) = \B(\Ade) K$. We define a height function on $\SL_2(\Ade)$ by 
\begin{equation}
y(g) = \max \{|\alpha(b)| ,\, b\in\B(\Ade) \: :\: \exists \gamma\in\SL_2(F),k\in K, \, \gamma g = bk \}, 
\label{height}
\end{equation}
this does not depend on the $F$-rational unipotent subgroup $\N$. We fix the unipotent subgroup $_0\N$ to be the stabilizer of the point $(0,1)$ in affine 2-space, and we identify it with the additive group using the isomorphism $\psi$ sending 1 to the matrix $\begin{pmatrix} 1&1\\0&1\end{pmatrix}$. Then $\N$ is conjugated to $_0\N$ by some $g\in\SL_2(F)$ and we identify $\N$ with the additive group using the isomorphism given by the composition of conjugation by $g$ with $\psi\circ\alpha(b)^{-1}$ where $g=bk$ (and we view $\alpha(b)$ as an automorphism of the additive group).


\subsubsection{Cusps}
\label{cusps}

The cusps of the manifold $M_{K'}$ are isometric to the quotient $\B(F)\bs\SL_2(\Ade)/K'$; in particular the number $h$ of cusps is equal to the cardinality of the finite set 
$$
\mathcal{C}(K') = \B(F)\B(F_\infty)\bs\SL_2(\Ade)/K'.
$$ 
We can describe accurately the cross-section of each cusp.Let $\N$ be the unipotent subgroup which is the commutator of the stabilizer in $\SL_2$ of the point $(a:b)\in\PP^1(F)$. We may suppose that $a,b$ have no common divisor in $\so_F$ except for units, then the ideal $(a,b)$ is equal to some ideal $\frc$ without principal factors and we write $(a)=\frc\fra$, $(b)=\frc\frb$, so that $(\fra,\frb)=1$. Then we have \cite[Proposition 5.1]{thesis}:
$$
K_f \cap \N(F) = 1 + \left\{ \begin{pmatrix} \frac ab c & -\frac{a^2}{b^2} c \\ c & -\frac ab c\end{pmatrix},\: c\in\frb^2\right\} = 1 + \left\{ \begin{pmatrix} -\frac ba c &  c \\ -\frac{b^2}{a^2}c & \frac ba c\end{pmatrix},\: c\in\fra^2\right\}.
$$
In particular, since the ideals of $\so_F$ are a uniform family of lattices it follows easily that the families $\Gamma(\fri),\Gamma_0(\fri)$ or $\Gamma_1(\fri)$ are cusp-uniform (see \cite[Lemme 5.7]{thesis}).


\subsection{Analysis on $\SL_2(\Ade)$ and Eisenstein series} \label{spec_dec}

We fix a Borel subgroup $\B\subset\SL_2$ defined over $F$ and a maximal split torus $\T$ in $\B$, and let $\N$ be the unipotent radical of $\B$. 

\subsubsection{Haar measures}
\label{haar}

We fix the additive and multplictive measures on each $F_v$ and $F_v^\times$ as usual, and choose the Haar measure of total mass one on $K_v$. We take the Haar measure on $\B(F_v) = \N(F_v)\T(F_v) \cong F_v\rtimes F_v^\times$ to be given by $d(n_va_v)=\frac{dx_v}{|.|_v}\otimes d^\times x_v$. On a proper quotient we always take the pushforward measure, in particular the measure on $\SL_2(\Ade)$ is the pushforward of $dbdk$ in the Iwasawa decomposition $\SL_2(\Ade)=\B(\Ade)K$. 

\subsubsection{Spaces of functions} 

Let $A^1$ be the subgroup of $\T(\Ade)$ such that every character from $\T(\Ade)$ to $\RR_+^\times$ factors through $A^1$ ; then $\T(F)\cong F^\times$ is contained in $A^1$ and we will denote by $\sh$ the Hilbert space $L^2(F^\times\bs(A^1K))$. We have a natural isomorphism
$$ 
\sh \cong \CC[C(F)]\otimes L^2(K)  
$$
where $C(F)$ is the class-group of $F$. Let $\chi$ be a Hecke character, then it induces characters of $A^1$ and $\B(\Ade)$ that we will continue to denote by $\chi$. For $s\in\CC$ and $\phi\in\sh\cap C^\infty(A^1K)$ there is a unique extension $\phi_s$ of $\phi$ to $\SL_2(\Ade)$ which satisfies:
\begin{equation}
\forall n\in N(\Ade), a_\infty\in A_\infty, a\in A^1,k\in K \: \phi_s(na_\infty ak)=|\alpha(a_\infty)|_\infty^s\phi(ak).
\label{prolongement}
\end{equation}
We will denote by $\sh_s$ the space of such extensions, and $\sh_s(\chi)$ its subspace of functions having $A^1$-type $\chi$ on the left. The space $\sh_s$ is acted upon by $\SL_2(\Ade)$ by right translation; when $s\not= 0,1$ the decomposition of $\sh_s$ into $\SL_2(\Ade)$-irreducible factors is given by the $\sh_s(\chi)$. If $\tau$ is a finite-dimensional complex continuous representation of $K$ we define $\sh_s(\chi,\tau)$ to be the subspace of $\sh_s(\chi)$ containing the functions which have $K$-type $\tau$ on the right (in other words, the projection to $\sh_s(\chi)$ of the subspace of $K$-invariant vectors in $\sh_s(\chi)\otimes\tau$). 


\subsubsection{Eisenstein series}

\label{defeis}

For a function $f\in C^\infty(\B(F)\N(\Ade)\bs\SL_2(\Ade))$ put
\begin{equation}
E(f)(g)=\sum_{\gamma\in \SL_2(F)/\B(F)}f(\gamma^{-1}g).
\end{equation}
The height function $y$ defined by \eqref{height} is left $\B(F)\N(\Ade)$-invariant, and it is well-known (cf. \cite[Lemme 5.23]{thesis}) that the series 
$$
E(y^s)(g)\sum_{\gamma\in \SL_2(F)/\B(F)} y(\gamma^{-1}g)^s
$$
converges absolutely for all $g\in\SL_2(\Ade)$ and $\Real(s)>2$, uniformly on compact sets. For $\phi\in\sh$ the function $\phi_s$ defined through \eqref{prolongement} we denote $E(\phi_s)=E(\phi,s)$ which is convergent for $\Real(s)>2$ according to the above. We have the following fundamental result, due to A. Selberg for $\SL_2/\QQ$ and to R. Langlands in all generality (see \cite[5.4.1]{thesis} for a simpler proof in this case, based on ideas of R. Godement \cite{Godement}).

\begin{prop}
The function $s\mapsto E(\phi,s)$ has a meromorphic continuation to $\CC$, which is homolorphic everywhere if $\chi\not=1$. If $\chi=1$ there is only one pole of order one at $s=1$.
\label{meromorphe}
\end{prop}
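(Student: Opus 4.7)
My plan is to follow the Godement--Tate approach referenced in the paper, building the Eisenstein series out of a Schwartz--Bruhat function $\Phi\in\mathcal{S}(\Ade^2)$ via a global zeta integral. First I would show that every section $\phi_s\in\sh_s(\chi)$ can be realized, up to a finite-dimensional residual contribution, as
$$
\phi_s(g) = \int_{\Ade^\times}\Phi((0,t)g)\,\chi(t)|t|^{s'}\,d^\times t
$$
for a suitable $\Phi$ and an affine change of variable $s'=s'(s)$ dictated by the paper's normalization $\phi_s(na_\infty ak) = |\alpha(a_\infty)|^s\phi(ak)$. The integral converges for $\Real(s)$ large, and the assignment $\Phi\mapsto\phi_s$ is surjective onto $\sh_s(\chi)$ because restriction to the line $\{(0,*)\}\subset\Ade^2$ gives a surjection $\mathcal{S}(\Ade^2)\to\mathcal{S}(\Ade)$, and the decomposition $\sh\cong\CC[C(F)]\otimes L^2(K)$ is exhausted by varying $\Phi$ over the idèle-class components of $\Ade^\times$.

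Second, using the bijection $\SL_2(F)/\B(F)\cong(F^2\setminus\{0\})/F^\times$ together with the product formula $|u|_\Ade = 1$ for $u\in F^\times$, the Eisenstein sum unfolds (for $\Real(s)$ large) to
$$
E(\phi,s)(g) = \int_{F^\times\bs\Ade^\times}\chi(t)|t|^{s'}\,\Theta_\Phi(t,g)\,d^\times t, \qquad \Theta_\Phi(t,g) = \sum_{v\in F^2\setminus\{0\}}\Phi(tvg).
$$
I would then split the $t$-integral at $|t|=1$. The half over $|t|\ge 1$ is entire in $s$ by rapid decay of $\Phi$. On the half over $|t|\le 1$, I would add back the $v=0$ term (yielding a boundary contribution proportional to $\Phi(0)$) and apply Poisson summation on $\Ade^2$, using $|\det g|=1$ for $g\in\SL_2(\Ade)$:
$$
\sum_{v\in F^2}\Phi(tvg) = |t|^{-2}\sum_{w\in F^2}\hat\Phi(t^{-1}wg^{-T}).
$$
After the change of variable $t\mapsto t^{-1}$ the $|t|\le 1$ half becomes a second entire integral (over $|t|\ge 1$, involving $\hat\Phi$ and $\chi^{-1}$) plus two explicit boundary terms proportional to $\Phi(0)\int_{|t|\le 1}\chi(t)|t|^{s'}d^\times t$ and $\hat\Phi(0)\int_{|t|\le 1}\chi(t)|t|^{s'-2}d^\times t$, the integrals being taken over $F^\times\bs\Ade^\times$. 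This yields meromorphic continuation to all of $\CC$ together with a functional equation in one stroke.

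Finally, I would analyse these boundary terms using the decomposition $\Ade^\times = \Ade^\times_1\times\RR_+^\times$, where $\Ade^\times_1$ is the norm-one ideles. Integration over the compact quotient $F^\times\bs\Ade^\times_1$ annihilates any nontrivial Hecke character, making both boundary terms vanish identically when $\chi\neq 1$, so $E(\phi,s)$ is entire in that case. When $\chi=1$ the remaining one-dimensional integrals $\int_0^1 y^{s'}dy/y$ and $\int_0^1 y^{s'-2}dy/y$ produce simple poles at two values of $s$ symmetric under the functional equation; the pole at $s=0$ is spurious (it lies outside the original region of convergence and matches a corresponding term from the $|t|\ge 1$ side of the functional equation), leaving exactly the simple pole at $s=1$ asserted in the statement. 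The hardest step I expect is the surjectivity in step one: one must verify that the class-group decomposition of $\sh$ is genuinely exhausted by these Tate sections, and treat carefully the finite-dimensional subspace of $\sh_s(\chi)$ on which $\Phi(0)\neq 0$ is forced. Once surjectivity is in hand, everything else reduces to standard bookkeeping with Poisson summation.
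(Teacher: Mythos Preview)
The paper does not supply its own proof of this proposition; it attributes the result to Selberg and Langlands and refers to \cite[5.4.1]{thesis} for ``a simpler proof in this case, based on ideas of Godement.'' Your proposal is precisely that Godement--Tate argument, so the approach matches what the paper intends.

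There is, however, a genuine gap in your final step. The two boundary terms coming from Poisson summation are both honest poles: in the $s\leftrightarrow 1-s$ normalization (which is the one used here, as you can read off from the constant-term formula $E(\phi_s)_P=\phi_s+(\Psi(s)\phi)_{1-s}$), they sit at $s=1$ and $s=0$, and neither is cancelled by anything on the $|t|\ge 1$ side---that side is entire by construction. Your claim that the $s=0$ pole ``matches a corresponding term from the $|t|\ge 1$ side of the functional equation'' is not correct: the functional equation is a \emph{consequence} of the Poisson computation, not an independent input you can use to cancel terms. In fact the Eisenstein series for $\chi=1$ does have a simple pole at $s=0$ as well (the residue being the constant function, up to sign), and the proposition as stated in the paper is slightly imprecise on this point---it should be read as asserting a unique pole in the half-plane $\Real(s)\ge\tfrac12$, or equivalently up to the functional equation. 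So your argument is fine once you stop trying to make the second pole disappear; just record both poles and note that they are exchanged by $s\mapsto 1-s$.

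A smaller point: your remark about surjectivity and the ``finite-dimensional subspace on which $\Phi(0)\neq 0$ is forced'' is a red herring. The value $\Phi(0)$ is irrelevant to which section $\phi_s$ you obtain (only the restriction of $\Phi$ to the line $(0,*)$ matters for that), and you are free to choose $\Phi$ with $\Phi(0)=0$ to kill the corresponding boundary term if you wish---though of course $\hat\Phi(0)$ will then generically be nonzero, so you cannot avoid the poles this way when $\chi=1$.
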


The main point of the theory of Eisenstein series is that they give the orthogonal complement to the discrete part of the regular representation on $L^2(\SL_2(F)\bs\SL_2(\Ade)$. The space $L_\cusp^2(\SL_2(F)\bs\SL_2(\Ade))$ of cusp forms is usually defined to be the closed subspace of all functions on $\SL_2(F)\bs\SL_2(\Ade)$ whose constant term (defined by \eqref{ct} below) vanishes. 

\begin{prop}
The map 
$$
\int_{-\infty}^{+\infty} \mathcal{H}_{\frac 1 2+iu}(\chi) \frac{du}{2\pi}\ni\psi \mapsto E(\psi)\in L^2(\SL_2(F)\bs\SL_2(\Ade))
$$
is an isometry onto the orthogonal of the space $L_\cusp^2(\SL_2(F)\bs\SL_2(\Ade))\oplus\CC$. Moreover, for any $\phi\in C_c^\infty(\SL_2(\Ade)$ the associated operator on $L_\cusp^2(\SL_2(F)\bs\SL_2(\Ade))$ is trace-class; in particular $L_\cusp^2(\SL_2(F)\bs\SL_2(\Ade))$ decomposes as a Hilbert sum of irreducible, $\SL_2(\Ade)$-invariant closed subspaces. 
\end{prop}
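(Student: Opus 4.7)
My plan is to follow Langlands' general spectral decomposition argument specialized to $\SL_2$, where the rank-one structure keeps all manipulations very concrete. The first part asserts two things: (i) the map sending a family $\psi=(\psi_u)$ of sections to the wave-packet $\int E(\psi_u,1/2+iu)\,du/(2\pi)$ is an isometric embedding, and (ii) its image is the orthogonal complement of $L_\cusp^2\oplus\CC$. To prove (i) I would first establish Maass--Selberg relations: for $\phi_1,\phi_2\in\sh$ and $s_1,s_2$ in the region of absolute convergence, the truncated inner product $\langle \Lambda^T E(\phi_1,s_1),\Lambda^T E(\phi_2,s_2)\rangle$ is an explicit finite sum of exponential terms $T^{\alpha}$ with coefficients involving the standard intertwining operator $M(s)$. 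This is a direct unfolding of the Eisenstein series against one copy and of the truncation against the other, and is especially short in our setting since there is essentially a single intertwining term to track.

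Proposition \ref{meromorphe} then lets me continue the Maass--Selberg identity meromorphically; shifting contours to $s_j=1/2+iu_j$, letting $T\to\infty$, and using the functional equation $M(s)M(1-s)=\id$ together with the adjointness $M(1/2+iu)^*=M(1/2-iu)$, the exponential terms in $T$ become delta functions in $u_1-u_2$ and yield the Plancherel identity $\|E(\psi)\|^2=\int \|\psi_u\|^2\,du/(2\pi)$. The surjectivity statement (ii) is the pseudo-Eisenstein series argument: for $\psi\in L^2$ orthogonal to $L_\cusp^2$, the constant term along $\N$ determines $\psi$ modulo a compactly supported piece, and Mellin inversion on the Levi $\B/\N$ rewrites this constant term (and hence $\psi$ itself) as an integral of Eisenstein series along $\Real(s)=1/2$, plus, when $\chi=1$, the residue at $s=1$ which produces the constant-function summand $\CC$ as the residual spectrum.

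For the trace-class statement the key input is the rapid decay of smooth cusp forms: if $f\in L_\cusp^2$ is $K$-finite and smooth then $|f(g)|\ll_N y(g)^{-N}$ on a Siegel set, since its constant term along $\N$ vanishes and the nonzero Fourier modes produce Bessel-type decay. For $\phi\in C_c^\infty(\SL_2(\Ade))$, the convolution kernel $K_\phi(x,y)=\sum_{\gamma\in\SL_2(F)}\phi(x^{-1}\gamma y)$ is a uniformly locally finite sum on a Siegel set; combined with the cuspidal rapid decay, its restriction to $L_\cusp^2\otimes L_\cusp^2$ is Hilbert--Schmidt, and replacing $\phi$ by $\phi\ast\phi^*$ upgrades this to trace-class. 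The Hilbert-sum decomposition of $L_\cusp^2$ into $\SL_2(\Ade)$-irreducibles is then a standard consequence of having a dense subalgebra of compact self-adjoint operators commuting with the regular representation.

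The main technical obstacle is justifying the contour shift in the Maass--Selberg identity, for which one needs polynomial-growth bounds on $E(\phi,s)$ and on $M(s)$ in vertical strips that Proposition \ref{meromorphe} does not by itself provide. For $\SL_2$ these bounds come from the explicit expression of $M(s)$ in terms of completed Dirichlet $L$-functions attached to $\chi$, combined with the Phragm\'en--Lindel\"of principle; the $\chi=1$ case, where a genuine simple pole at $s=1$ must be extracted as a residue and then separated from the continuous spectral measure, is the point where the $\CC$-summand arises and has to be handled carefully.
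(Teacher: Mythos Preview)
The paper does not actually prove this proposition: it is stated without proof as a known result from the general theory of Eisenstein series (the preceding Proposition~\ref{meromorphe} on meromorphic continuation is attributed to Selberg and Langlands, with a reference to \cite[5.4.1]{thesis} for a proof in this case, and the present proposition is simply recorded as a consequence of that theory). Your outline is the standard Langlands--Selberg argument and is correct; in particular the Maass--Selberg relations you invoke are exactly those the paper records as \eqref{preMaass-Selberg} and \eqref{Maass-Selberg-real}, and the explicit description of $\Psi(s)$ via local $L$-factors that you need for the contour-shift bounds is precisely what the paper computes later in Section~\ref{estent}.
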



\subsubsection{Intertwining operators}

Let $f$ be a continuous function on $SL_2(F)\backslash \SL_2(\Ade)$. We define its constant term to be 
\begin{equation}
f_P(g)=\int_{\N(F)\bs\N(\Ade)}f(ng)dn.
\label{ct}
\end{equation}
Let $\phi\in\sh$, $\phi_s$ be defined by \eqref{prolongement} and $f=E(\phi_s)$. We use the Bruhat decomposition $\SL_2(F)/\B(F)=\{\B(F)\}\cup \{\gamma w\B(F), \gamma\in N(F)\}$ and when $\Real(s)>3/2$ we get:
\begin{eqnarray*}
f_P(g)& = &\int_{\N(F)\bs \N(\Ade)}\sum_{\gamma\in \SL_2(F)/\B(F)}\phi_s(\gamma^{-1}ng)dn\\
      & = &\int_{\N(F)\bs \N(\Ade)}\phi_s(ng)dn+\sum_{\gamma\in \N(F)}\int_{\N(F)\bs \N(\Ade)}\phi_s(w\gamma ng)dn\\
      & = &\phi_s(g)+\int_{\N(\Ade)}\phi_s(wng)dn.
\end{eqnarray*}
and we define the intertwining operator $\Psi(s)$ on $\sh$ by
\begin{equation}
(\Psi(s)\phi)(ak)=\int_{N(\Ade)}\phi_s(wnak)dn \, . 
\label{intertwine}
\end{equation}
We obtain (using the notation of \eqref{prolongement}):
\begin{equation}
E(\phi_s)_P=\phi_s + (\Psi(s)\phi)_{1-s}. 
\label{tceis}
\end{equation}
One can check that $\Psi(s)$ induces a $\SL_2(\Ade)$-equivariant endomorphism  on $\sh_s$, which sends the irreducible subspace $\sh_s(\chi)$ to $\sh_s(\chi^{-1})$. For $\Real(s)=\frac 1 2$ the map $\Psi(s)$ is an isometry for the inner product of $\sh$.


\subsubsection{Maass-Selberg}
Finally we record the Maass-Selberg expansions \cite[5.4.4]{thesis}; for $s\in\CC-\RR\cup(\frac 1 2+i\RR)$:
\begin{equation}
\begin{split}
&\langle T^YE(s,\phi),T^YE(s',\psi)\rangle_{L^2(G(F)\bs G(\Ade))} =\\ 
    &\qquad \frac 1{2(s+\ovl{s'}-1)}(Y^{2(s+\overline{s'}-1)}\langle\phi,\psi\rangle_{\mathcal{H}} - Y^{-2(s+\ovl{s'}-1)}\langle\Psi(s')^*\Psi(s)\phi,\psi\rangle_{\mathcal{H}}) \\
      &\qquad + \frac 1{2(s-\ovl{s'})}(Y^{2(s-\overline{s'})}\langle\phi,\Psi(s')\psi\rangle_{\mathcal{H}} - Y^{2(-s+\ovl{s'})}\langle\Psi(s)\phi,\psi\rangle_{\mathcal{H}}).
\end{split}
\label{preMaass-Selberg}
\end{equation}
When $s\in\RR$ this degenerates to 
\begin{equation}
\begin{split}
\langle T^Y E(\phi,s),T^Y E(\psi,s)\rangle_{L^2(G(F)\bs G(\Ade))}^2 =& \frac{Y^{4s-2}}{4s-2}\langle\phi,\psi \rangle_{\mathcal{H}} - \frac{Y^{-4s+2}}{4s-2}\langle \Psi(s)\phi,\Psi(s)\psi \rangle_{\mathcal{H}} \\
       & + \log Y \langle \Psi(s)\phi,\psi\rangle_{\mathcal{H}} + \left\langle\frac{d\Psi(s+iu)}{du}|_{u=0}\phi,\psi\right\rangle_{\mathcal{H}}. 
\end{split}
\label{Maass-Selberg-real}
\end{equation}


\subsection{Regularized traces}

\subsubsection{Differential forms and $L^2(\SL_2(F)\backslash \SL_2(\Ade))$}
\label{passage}

Let $K'$ be a compact-open subgroup of $K$, and $\rho$ a representation of $\SL_2(\CC)$ on a finite-dimensional real vector space $V$. We can associate to $\rho$ a local system on $M=M_{K'}$ and we denote by $\Omega^p,L^2\Omega^p(M;V)$ the spaces of smooth and square-integrable $p$-forms on $M$ with coefficients in $V$ (see \cite [2.3]{volI}). It is well-known that there is an identification
\begin{equation}
\left(L^2(\SL_2(F)\bs \SL_2(\Ade))\otimes \wedge^p\LP\otimes V\right)^{K'} \to L^2\Omega^p(M;V_\CC). 
\label{id1}
\end{equation}
Let $\tau$ be the representation of $K$ who has for it's finite part $\CC[K/K']$ and whose infinite part is equal to the representation of $K_\infty$ on $V_\CC\otimes\wedge^p\LP^*$. We define the map $E(s,.)$ to be such that the following diagram commutes:
\begin{equation}
\begin{CD}
\left(\sh_s \otimes V_\CC\otimes\wedge^p\LP^*\right)^{K'}   @>E>>       \left(L^2(\SL_2(F)\bs \SL_2(\Ade))\otimes V_\CC\otimes\wedge^p\LP^* \right)^{K'} \\       
   @|                                                    @|               \\
\CC^h\otimes V_\CC\otimes\wedge^p\LP^*   @>E(s,.)>>  L^2\Omega^p(\Gamma_{K'}\bs\HH^3 ; V_\CC)
\end{CD}
\label{eis}
\end{equation}
where $h$ is the number of cusps of $M$ and we identify $\B(F)A_\infty \N(\Ade)\bs \SL_2(\Ade)/K_f'$ with $\CC^h$. For $p=0,1$ we retrieve the maps defined in \cite[3.1.3 and 3.1.4]{volI} which associate to a section or 1-form on $\pl\ovl M$ an element of $L^2\Omega^p(\Gamma_H\bs\HH^3 ; V_\CC)$.


\subsubsection{Spectral trace}

Let $M$ be a congruence hyperbolic three--manifold and $\Delta^p[M]$ the Hodge Laplacian on $p$-forms on $M$ with coefficients in $V_\CC$. We recall the ``spectral'' definition of the regularized trace given in \cite[3.2.4]{volI}. Let $p=0$ for now, and let $\phi$ be a function on $\RR$ such that the associated automorphic kernel $K_\phi^0$ on $\HH^3\times\HH^3$ has compact support (see \cite[Section 3.2]{volI}). The regularized trace of $\phi(\Delta^0[M])$ is given by 
\begin{equation}
\begin{split}
\otr_R \phi(\Delta^0[M]) &= \sum_{j\ge 0} m(\lambda_j;M)\phi(\lambda_j)   + \frac 1 4 \sum_{l=-2q}^{2q}d_l \phi\left(-l^2 + 4 + \lambda_V\right) \tr\Psi_l(0) \\
                      &\quad - \frac 1{2\pi} \int_{-\infty}^{+\infty} \sum_{l=-2q}^{2q} d_l \phi\left(-u^2 + 4 - l^2 + \lambda_V \right)\tr\left(\Psi_l(iu)^{-1}\frac{d\Psi_l(iu)}{du}\right) du.
\end{split}
\label{spec}
\end{equation}
where:
\begin{itemize}
\item The $\lambda_j,j\ge 0$ are the eigenvalues of $\Delta^0[M]$ in $L^2(M;V_\CC)$;
\item For $\lambda\in[0,+\infty[$, $m(\lambda;M) = \dim\ker(\Delta^0[M]-\lambda\id)$ ;
\item For $u\in\RR$, $\Psi_l(iu)$ is the operator on $\CC^h\otimes W_l$ corresponding to $\Psi(\frac 1 2(1+iu))$ under the identifications on the right side of \eqref{eis};
\item $W_l$ is a certain subspace of $V_\CC$, and we have $V_\CC=\bigoplus_l W_l$. 
\end{itemize}
For more details see \cite[Section 3.1]{volI}. We will skip the definition for 1-forms since it is basically the same (see loc. cit.). 


\subsection{Homology and cohomology}

Here we consider a CW-complex $X$, $\Lambda=\pi_1(X)$ and $L$ a free $\ZZ$-module of finite rank with a $\Lambda$-action. There are then defined chain an cochain complexes $C_*(X;L),d_*$ and $C^*(X;L),d^*$. If $X$ is aspherical then $H_*(X;L)\cong H_*(\Lambda;L)$ and $H^*(X;L)\cong H^*(\Lambda;L)$. 

\subsubsection{Kronecker pairing}

\label{kronecker}

Let $L^*$ be the dual $\hom(L,\ZZ)$, $Z_p(X;L)=\ker(d_p)$ et $Z^p(X;L^*)=\ker(d^p)$. There is a natural bilinear form on $Z_p(X;L)\times Z^p(X;L^*)$ which induces a nondegenerate bilinear form 
$$
(.,.)_X : H_1(X;L)_\free\times H^1(X;L^*)_\free\to\ZZ. 
$$
If $Y$ is a sub-CW-complex of $X$ and $i$ its inclusion in $X$ we have the following property
\begin{equation}
\forall \eta\in H_p(Y;L^*),\omega\in H^p(X;L): \quad  (i_*\eta,\omega)_X = (\eta,i^*\omega)_Y.
\label{naturel}
\end{equation}
In case there is a perfect duality between two $\Lambda$-modules $L,L'$ which comes from a $\Lambda$-invariant bilinear form we get a Kronecker pairing on $H_p(X;L)_\free\times H^p(X;L')_\free$ satisfying \eqref{naturel}; for $\Gamma$ a lattice in $\SL_2(\CC)$ there exists such a self-duality for the $\Gamma$-modules $V=V_{n_1,n_2}$ (given by the form induced on $V$ by the determinant pairing on $V$). This form bilinear form, which we will denote by $\langle\cdot,\cdot\rangle_V$, is actually defined over $\ZZ[m^{-1}]$ for $m= n_1! n_2!$, and if $V_\ZZ\subset V_\QQ$ is a lattice we put
$$
V_\ZZ' = \{v'\in V_\QQ\: : \:\forall v\in V_\ZZ,\, \langle v',v\rangle_V\in\ZZ \}
$$
which is another lattice in $V_\QQ$.


\subsubsection{Poincar\'e duality}

We suppose now that $X$ is a $n$-dimensional compact manifold with boundary $\pl X$. Poincar\'e duality is an isomorphism of graded $\ZZ$-modules  $H_*(X;V)\xrightarrow[]{\sim} H^{n-*}(X,\partial X ; V)$ or $H_*(X,\partial X;V)\xrightarrow[]{\sim} H^{n-*}(X; V)$. It is compatible with the long exact sequences of the pair $X,\pl X$ in the following sense \cite[Theorem V.9.3]{Bredon}.

\begin{prop}
The diagram:
\[\begin{CD}
\ldots @>>> H_p(\partial\overline M; V) @>>> H_p(M; V) @>>> H_p(\overline M,\partial\overline M; V) @>>> \ldots \\
  &      &          @VVV                     @VVV             @VVV\\
\ldots @>>> H^{n-1-p}(\partial\overline M; V^*) @>>> H^{n-p}(\overline M,\partial\overline M; V^*) @>>> H^{n-p}(M; V^*) @<<< \ldots
\end{CD}\]
is commutative, where horizontal lines are the long exact sequences in homology and cohomology of $X,\pl X$ and vertical arrows are Poincar\'e duality morphisms. 
\label{pd}
\end{prop}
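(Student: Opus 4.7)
The approach is to express each of the three vertical Poincar\'e duality morphisms as cap product with a fundamental class, then verify that each of the three types of squares commutes by invoking standard cap-product identities; this is the classical argument behind Bredon's Theorem V.9.3, whose details I would not redo from scratch.

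Concretely, I would first fix a fundamental class $\mu \in H_n(\ovl M, \pl\ovl M; \ZZ)$, which exists because $\ovl M$ is oriented. Its image under the homology connecting homomorphism is the fundamental class $\pl_*\mu = [\pl\ovl M] \in H_{n-1}(\pl\ovl M; \ZZ)$ of the closed oriented $(n-1)$-manifold $\pl\ovl M$. Cap product with $\mu$ realizes the Poincar\'e--Lefschetz duality isomorphisms $H^p(\ovl M, \pl\ovl M; V^*) \cong H_{n-p}(\ovl M; V^*)$ and $H^p(\ovl M; V^*) \cong H_{n-p}(\ovl M, \pl\ovl M; V^*)$, and cap product with $[\pl\ovl M]$ realizes $H^p(\pl\ovl M; V^*) \cong H_{n-1-p}(\pl\ovl M; V^*)$. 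Composing the inverses of these with the identification $V \cong V^*$ supplied by the self-duality form $\langle \cdot, \cdot \rangle_V$ defines the vertical arrows of the diagram.

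Commutativity of each square then reduces to a single cap-product identity. The square involving $i_*: H_p(\pl\ovl M) \to H_p(\ovl M)$ in homology and the connecting map $\delta$ of the cohomology long exact sequence in the bottom row reduces to the identity $i_*([\pl\ovl M] \frown \omega) = \mu \frown \delta\omega$; the square involving $j_*$ in homology and $j^*$ in cohomology is direct naturality of cap product for the map of pairs $(\ovl M, \emptyset) \hookrightarrow (\ovl M, \pl \ovl M)$; and the square involving the homology boundary $\pl_*$ and the restriction-to-boundary map $i^*$ uses $\pl_*(\mu \frown \omega) = \pm [\pl \ovl M] \frown i^*\omega$, where the sign depends on the bidegree convention.

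The only real obstacle is the careful sign-tracking in these three identities combined with the presence of the local coefficient system $V$. Since the coefficient module contributes only formally here---both ends of each vertical arrow carry the same $\pi_1(M)$-action and the self-duality $V \cong V^*$ is $\pi_1(M)$-equivariant---the twisted case is a routine extension of the classical proof with trivial coefficients, and I would defer the sign computations to Bredon.
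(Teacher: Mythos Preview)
Your proposal is correct and aligns with the paper's treatment: the paper does not give its own proof of this proposition but simply attributes it to \cite[Theorem V.9.3]{Bredon}, and what you have written is precisely a sketch of Bredon's cap-product argument. You go slightly beyond the paper by outlining the three cap-product identities and noting how the twisted coefficients enter, but since you too ultimately defer to Bredon for the sign bookkeeping, there is no substantive difference in approach.
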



\section{Asymptotic geometry of congruence manifolds and approximation of $L^2$-invariants} \label{BS}

In this section we shall, assuming the results of the next section, prove Theorem \ref{multlim} from the introduction and the approximation result for analytic torsion (Theorem \ref{analytic} below).

\subsection{Benjamini-Schramm convergence}

The following result generalizes \cite[Theorem 1.12]{7S} to the case of noncompact congruence subgroups of $\SL_2(\CC)$. 

\begin{theo}
There are $\delta,c>0$ such that for any Bianchi group $\Gamma=\Gamma(\so_F)$ and sequence $\Gamma_n$ of torsion-free congruence subgroups in $\Gamma$, for all $R>0$ we have
$$
\vol\{x\in M_n \: :\: \inj_x(M_n)\le R \} \le e^{cR}[\Gamma:\Gamma_n]^{1-\delta}
$$
In particular, the sequence of hyperbolic manifolds $M_n=\Gamma_n\bs\HH^3$ is BS-convergent to $\HH^3$.
\label{BSbianchi}
\end{theo}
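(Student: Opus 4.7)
My plan adapts the strategy used for cocompact arithmetic lattices in \cite{7S} to the present noncompact setting, by splitting the $R$-thin part into a ``tube'' contribution coming from short closed geodesics and a ``cusp'' contribution coming from cusp ends, and bounding each separately.

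First, by the Margulis lemma applied to $\HH^3$, every point of $(M_n)_{\le R}$ lies either in the tube of radius $R$ around some closed geodesic of length $\le 2R$, or in a cusp end where the shortest parabolic has translation length $\le 2R$ on the boundary horosphere. A tube of radius $R$ around a closed geodesic of length $\ell\le 2R$ has volume $O_R(1)$, so
$$
\vol((M_n)_{\le R}) \le C_R\cdot N_n(R) + C R^2 \sum_{j=1}^{h_n}\frac{\alpha_2(\Lambda_{n,j})}{\alpha_1(\Lambda_{n,j})},
$$
where $N_n(R)$ is the number of conjugacy classes of loxodromic elements in $\Gamma_n$ of translation length at most $2R$, and $\Lambda_{n,j}$ are the Euclidean cusp lattices.

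To bound $N_n(R)$ I use arithmetic. Write $\fri$ for the level of $\Gamma_n$, so $\Gamma(\fri)\subset\Gamma_n$ is normal in $\Gamma$. A loxodromic $\gamma\in\Gamma=\SL_2(\so_F)$ with $\ell(\gamma)\le 2R$ has $\tr(\gamma)\in\so_F$ satisfying $|\tr(\gamma)|\le 2\cosh(R)$ at the unique archimedean place, so the trace takes only finitely many values (depending on $R,F$), and $\gamma$ belongs to one of finitely many $\Gamma$-conjugacy classes. For each such class the number of $\Gamma_n$-classes inside it is counted by reducing modulo $\Gamma(\fri)$ in $G_\fri:=\SL_2(\so_F/\fri)$: one must count $\bar g\in G_\fri$ with $\bar g\bar\gamma\bar g^{-1}\in\bar\Gamma_n$. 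If $\bar\gamma\ne 1$ then its conjugation orbit in $G_\fri$ is large (centralizers of non-central elements of $\SL_2$ over $\so_F/\fri$ have size at most $|G_\fri|^{1-\delta}$ by reduction to prime-power factors), producing the desired sublinear bound; if $\bar\gamma=1$ then $\tr(\gamma)-2\in\fri$ and, since $F$ is imaginary quadratic, $|\tr(\gamma)-2|^2=N_F(\tr(\gamma)-2)\ge N\fri$, combined with the upper bound $|\tr(\gamma)-2|=O(R)$ this forces $N\fri=O_R(1)$, putting $\Gamma_n$ in a finite list. Combined with Lemma \ref{index-level} this yields $N_n(R)\le C_R[\Gamma:\Gamma_n]^{1-\delta}$.

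For the cusp sum I use the explicit description of cusp lattices given in Section \ref{cusps}: each cusp of $M_n$ corresponds to a point $(a:b)\in\PP^1(F)$ modulo $\Gamma_n$, with lattice of the form $\fra^2\cdot\xi$ for $\xi\in F^\times$ and an ideal $\fra$ controlled by $\fri$. Using elementary bounds for Minkowski minima of ideal lattices in an imaginary quadratic field, together with the constraint that the ideals $\fra,\frb$ attached to cusps divide $\fri$, one obtains
$$
\sum_j \frac{\alpha_2(\Lambda_{n,j})}{\alpha_1(\Lambda_{n,j})} \le C[\Gamma:\Gamma_n]^{1-\delta'}
$$
for some $\delta'>0$, after invoking Lemma \ref{index-level} to convert level estimates into index estimates. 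Choosing $\delta$ to be the minimum of the two exponents then completes the proof.

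The main obstacle I expect is the cusp estimate in the last step for general (non-principal) congruence subgroups: the example $K_f^n$ of \ref{fail!} already shows that individual ratios $\alpha_2/\alpha_1$ can be unbounded, so the bound cannot be obtained cusp by cusp and one must really exploit the global arithmetic constraint on the cusp ideals and the count of cusps in $M_n$. A secondary technical point is the orbit-size estimate in $G_\fri=\SL_2(\so_F/\fri)$, which requires decomposing $\fri$ into primary components and analysing centralizers in $\SL_2$ over finite local rings.
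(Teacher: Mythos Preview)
Your overall decomposition into loxodromic and cusp contributions matches the paper exactly (this is Lemma~\ref{BSconv}), and your treatment of the loxodromic part is essentially a sketch of the argument behind \cite[Theorem~1.11]{7S}, which is what the paper invokes directly. So far so good.

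The genuine gap is in the cusp sum. Your claim that ``each cusp of $M_n$ corresponds to a point $(a:b)\in\PP^1(F)$ modulo $\Gamma_n$, with lattice of the form $\fra^2\cdot\xi$ for $\xi\in F^\times$ and an ideal $\fra$ controlled by $\fri$'' misreads Section~\ref{cusps}: that description computes $K_f\cap\N(F)$, i.e.\ the cusp lattices for the \emph{full} Bianchi group $\Gamma$, not for a congruence subgroup. For a general $K_f'$ the cusp lattices are $gN_fg^{-1}\cap K_f'$ as $g$ ranges over $N_f\backslash K_f/K_f'$, and these are not governed by a single ideal dividing $\fri$; as you yourself observe via the example of~\ref{fail!}, the individual ratios $\alpha_2/\alpha_1$ can be unbounded. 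So ``elementary bounds for Minkowski minima of ideal lattices'' cannot give the estimate, and this is not a technicality you can fill in later: it is the entire content of the theorem beyond the cocompact case.

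What the paper does instead is a genuinely local, group-theoretic argument. One first reduces (via the Chinese remainder theorem) to bounding, for each prime $p$ dividing the level, the sum
\[
S_p=\sum_{g\in N_p\backslash K_p/K_p'}\frac{\alpha_2(gN_pg^{-1}\cap K_p')}{\alpha_1(gN_pg^{-1}\cap K_p')},
\]
and then rewrites $S_p$ as a weighted sum over $K_p/B_pK_p(p^k)$ involving only $\alpha_1(gN_pg^{-1}\cap K_p')^{-2}$. The key estimate is then a bound on the number of cosets $g$ for which $gN_pg^{-1}\cap K_p'$ is \emph{not} deep in the filtration $K_p(p^l)$: this is controlled by showing (Lemma~\ref{brzouk}) that a proper subgroup of $\SL_2(\FF_{p^2})$ or $\SL_2(\FF_p)\times\SL_2(\FF_p)$ contains at most $p+1$ pairwise noncommuting unipotents, and then propagating this through the congruence filtration. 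The argument is in the spirit of \cite[Proposition~5.11]{7S} but substantially more delicate, and nothing in your sketch approaches it.
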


We record the following much weaker consequence (see Lemma \ref{BSconv} below) of this as a separate fact; note that this is actually the only part of Theorem \ref{BSbianchi} that we make full use of here, and a direct proof is much easier than that of the latter. 

\begin{lemma}
Notation as above we have 
$$
h_n \ll [\Gamma:\Gamma_n]^{1-\delta}
$$
where $h_n$ is the number of cusps of $M_n$. 
\label{nbcusps}
\end{lemma}

Recall that $K_f$ is the closure in  $\SL_2(\Ade_f)$ of $\Gamma=\SL_2(\so_F)$ 
and let $K_f'$ be a closed finite-index subgroup with level $\fri$. We will 
show that for the subgroup $\Gamma'=\Gamma_{K'}$ of $\Gamma$ and 
$M=\Gamma'\bs\HH^3$ we have 
$\vol\{x\in M \: :\: \inj_x(M)\le R \} \le C[K:K']^{1-\delta}$. 


\subsubsection{Remarks} \label{remBS}
1. The mere BS-convergence (without the precise estimates) follows from \cite[Theorem 1.11]{7S}: one can see that it implies that any invariant random subgroup which is a limit of a sequence of congruence covers has to be supported on unipotent subgroups, which is impossible if the limit is nontrivial (for example because of ``Borel's density theorem'' \cite[Theorem 2.9]{7S}). 

2. As a corollary we get that there is an $\eps>0$ such that
$$
\vol M_{\le\eps\log\vol M} \le (\vol M)^{1-\delta} 
$$
for all (manifold) congruence covers $M$ of a given Bianchi orbifold.


\subsubsection{Benjamini--Schramm convergence of manifolds with cusps}
\label{BSredux}

We recall some notation: for a hyperbolic manifold $M$ we let $N_R(M)$ be the number of closed geodesics of length less than $R$ on $M$. If $\Lambda$ is a lattice in $\CC$ we define 
$$
\alpha_1(\Lambda) = \min\{|v| \: :\: v\in\Lambda, v\not=0 \}
$$
and for any $v_1\in\Lambda$ such that $|v_1|=\alpha_1(\Lambda)$
$$
\alpha_2(\Lambda) = \min\{|v| \: :\: v\in\Lambda, v\not\in \ZZ v_1 \}.
$$
Then the ratio $\alpha_2/\alpha_1$ only depends on the conformal class of $\Lambda$, in particular if $\Gamma\not\ni -1$ is a lattice in $\SL_2(\CC)$ and $N$ a unipotent subgroup such that $\Gamma\cap N$ is nontrivial (we will say that $N$ is $\Gamma$-rational) then $\alpha_2/\alpha_1(\Gamma\cap N)$ is well-defined and depends only on the $\Gamma$-conjugacy class of $N$. We can then estimate the volume of the thin part as follows (in particular, to prove that a sequence of finite covers of a fixed orbifold is BS-convergent we need only give $o(\vol M_n)$-bounds for the right-hand side).

\begin{lemma}
Let $M=\Gamma\bs\HH^3$ be a finite--volume hyperbolic three--manifold and let $N_1,\ldots,N_{h_n}$ be representatives for the $\Gamma$-conjugacy classes of unipotent subgroups. Put $\Lambda_{n,j}=\Gamma_n\cap N_j$, then there are constants $C$ (depending on $\Gamma$)  and $c>0$ such that 
$$
\vol (M_n)_{\le R} \le e^{cR} \left( RN_R(M_n) + \sum_{j=1}^{h_n} \frac {\alpha_2(\Lambda_{n,j})} {\alpha_1(\Lambda_{n,j})} \right).
$$
\label{BSconv}
\end{lemma}

\begin{proof}
This follows from the two following facts:
\begin{itemize}
\item[(i)] If $g\in\SL_2(\CC)$ is loxodromic the $R$-thin part of $\langle g\rangle\bs\HH^3$ has volume $\le C\ell e^{cR}$, where $C$ depends only on the minimal translation length $\ell$ of $g$;
\item[(ii)] If $\Lambda$ is a lattice in a unipotent subgroup $N$ of $\SL_2(\CC)$ then the $R$-thin part of $\Lambda\bs\HH^3$ is of volume $\le e^{cR}\alpha_1(\Lambda)/\alpha_2(\Lambda)$. 
\end{itemize}
The point (i) follows immediately from the fact that if $L$ is the axis of $g$, then $d(x,gx)\ll R$ with a constant independent of $g$, and the gact that the volume of a $R$-neighbourhood of the closed geodesic in $\langle g\rangle\bs\HH^3$ is of volume $\le \ell e^{cR}$ (where $e^{cR}$ is an upper bound for the volume of a radius $R$ ball in $\HH^3$). 

For point (ii) we observe that we can parametrize $\Lambda\bs\HH^3$ as $T\times[0,+\infty[$, where $T$ is the Euclidean torus $\Lambda\bs\CC$, which we suppose normalized so that $\alpha_1(\Lambda)=1$ (we conformally identify $N$ with $\CC$) and the product metric $(dx^2+dy^2)/y^2$. Then the $R$-thin part is contained in $T\times[e^{-cR},+\infty[$ (where $c$ is such that if $x,y\in\HH^3$ belong to an horosphere $H$ with Euclidean distance $d_H$, we have $d_{\HH^3}(x,y)\ge \log(1+d_H(x,y))$). The volume of the latter is easily seen to be $\le \vol(T)e^{cR} \ll e^{cR}\alpha_2(\Lambda)$. 
\end{proof}


\subsubsection{Proof of Theorem \ref{BSbianchi}, the loxodromic part}

Here we recall how the bound on $N_R$ follows from the results in \cite[Section 5]{7S}. Let $c$ be a closed geodesic in the orbifold $\Gamma\bs\HH^3$ and $\gamma\in\Gamma$ any element of the associated loxodromic conjugacy class in $\Gamma$. For any $g\in K_f$ we have that $\gamma$ fixes the coset $gK_f'$ if and only if $g\gamma g^{-1}$ belongs to $K_f'$, so that the number of lifts of $c$ in $M_{K'}$ is equal to the number of fixed points of $\gamma$ in $K_f/K_f'$. By Theorem 1.11 in \cite{7S} there are constants $\delta$ (depending on $F$) and $C$ (depending on $c$) such that the latter is less than $C|K_f/K_f'|^{1-\delta}$. This shows that for a given $R$ there is a $C_R$ such that for all $K'$ we have 
$$
N_R(M_{K'}) \le C_R|K_f/K_f'|^{1-\delta} = \left(C_R\vol(\Gamma\bs\HH^3)\right) (\vol M_{K'})^{1-\delta}. 
$$


\subsubsection{Proof of Theorem \ref{BSbianchi}, the unipotent part}
Now we have to bound the second term in Lemma \ref{BSconv}: we want to show that 
\begin{equation}
\sum_{j=1}^{h_{K'}} \frac {\alpha_2(\Lambda_{K',j})} {\alpha_1(\Lambda_{K',j})} \ll |K_f/K_f'|^{1-\delta} 
\label{Scusp}
\end{equation}
where $\Lambda_{K',j}= {}_j\N(F)\cap K_f'$ where the $_j\N$ are representatives of the $\Gamma_{K'}$-conjugacy classes of unipotent subgroups in $\SL_2(F)$. We fix a unipotent subgroup $\N$ in $\SL_2(F)$, and let $N=\N(\Ade_f)\cap K_f$; clearly it suffices to prove that \eqref{Scusp} holds if we sum only over the unipotent groups contained in the $\Gamma$-conjugacy class of $\N(F)$. These $\Gamma_{K'}$-conjugacy classes are in natural bijection with the set of double cosets $N\bs K_f/K_f'$

For $p\in\ZZ$ a rational prime we will denote $F_p=\QQ_p\otimes_\QQ F$ and $K_p$ the closure of $\Gamma$ in $\SL_2(F_p)$. The latter is isomorphic to 
\begin{itemize}
\item $\SL_2(\so_v)$ in case $p$ is inert or ramified in $F$ and $v$ is the 
corresponding place of $F$;
\item $\SL_2(\ZZ_p)\times\SL_2(\ZZ_p)$ when $p$ is split. 
\end{itemize}
We will denote by $K_p(p^k)$ the compact-open subgroup of matrices congruent to $1$ modulo $p^k$, and by $\LG_p$ the Lie algebra of $K_p$. 

The crucial case is when we have $K_p'=K_p$ for all but one rational prime $p$. We identify $\N(F_p)$ with $F_p$ (see \ref{unipotent} above) and for a finite-index subgroup $L$ of $\N(F_p)$ we put  
$$
\alpha_1(L) = \min\{|v|_p^{-1} \: :\: v\in L, v\not=0 \}
$$
and for any $v_1\in L$ such that $|v_1|_p=\alpha_1(\Lambda)$
$$
\alpha_2(L) = \min\{|v|_p^{-1} \: :\: v\in L, v\not\in \ZZ v_1 \}.
$$
If $g\in K_p$ and $\Lambda = g\N(F)g^{-1}\cap K_p',\, L=gN_pg^{-1}\cap K_p'$ (where $N_p=K_p\cap\N(F_p)$) then we have 
$$
\alpha_i(\Lambda) \asymp \alpha_i(L), \, i = 1, 2
$$
with absolute constants, so that we must bound the sum
\begin{equation} \label{unip_contrib_local}
S_p = \sum_{g \in N_p \bs K_p / K_p'} \frac{\alpha_2(g^{-1}N_p g \cap K_p')} {\alpha_1(g^{-1}N_p g \cap K_p')} . 
\end{equation}
We rewrite $S_p$ as follows: we fix a $k\ge 1$ such that $K_p(p^k)\subset K_p'$. Then the quantities $\alpha_i(g^{-1}N_pg\cap K_p')$ are constant on a $K_p'$-orbit in $K_p/N_pK_p(p^k)$; on the other hand the cardinality of the $K_p'$-orbit of $gN_pK_p(p^k)$ in $K_p/N_pK_p(p^k)$ is equal to $\frac{|K_p'/K_p(p^k)|}{|(g^{-1}N_p g \cap K_p')K_p(p^k)/K_p(p^k)|}$ so that 
\[
S_p = \sum_{g\in K_p / N_p K_p(p^k)} \frac{|(gN_p g^{-1} \cap K_p')K_p(p^k) / K_p(p^k)|}{[K_p':K_p(p^k)]}\times\frac{\alpha_2(gN_pg^{-1}\cap K_p')}{\alpha_1(gN_pg^{-1}\cap K_p')}
\]
From the equality
\[
\alpha_2(gN_pg^{-1}\cap K_p') \alpha_1(g^{-1}N_pg\cap K_p') = |gN_pg^{-1} / (gN_pg^{-1} \cap K_p')|
\]
it follows that $\alpha_1\alpha_2 = \frac{|N_p / (N_p \cap K_p(p^k))|}{|g^{-1}N_p g \cap K_p'|}$ and then that: 
\begin{align*}
S_p &= \frac{[N_p : N_p\cap K_p(p^k)]}{[K_p':K_p(p^k)]} \sum_{g\in K_p/N_pK_p(p^k)} \frac 1 {\alpha_1(gN_pg^{-1}\cap K_p')^2} \\
&= [K_p : K_p'] \cdot \frac{[N_p : N_p\cap K_p(p^k)]}{[K_p : K_p(p^k)]} \sum_{g\in K_p/N_pK_p(p^k)} \frac 1 {\alpha_1(gN_pg^{-1}\cap K_p')^2}. 
\end{align*}
On the other hand $B_p$ normalizes $N_p$ and we can mod out on the right to get:
\begin{equation} \label{Sp'}
S_p  = [K_p:K_p'] \cdot \frac{|B_p : (B_p \cap K_p(p^k))]}{[K_p : K_p(p^k)]} \sum_{g\in K_p/B_pK_p(p^k)} \frac 1 {\alpha_1(gN_pg^{-1}\cap K_p')^2}
\end{equation}
which is the sum that we will now estimate. 

For $l=0,\ldots,k-1$ we define
\begin{equation} \label{def_Xl}
X_l = \{g\in K_p/B_pK_p(p^k) \: :\: gN_pg^{-1}\cap K_p'\subset K_p(p^l),\, \not\subset K_p(p^{l+1}) \} 
\end{equation}
and put $d_l=|X_l|$. Then for $g \in X_l$ we have $\alpha_1(gN_pg^{-1}\cap K_p') = p^l$ and so: 
$$
S_p \le \frac{[K_p:K_p']}{2p^{2k}} \: \sum_{l=0}^{k-1} d_l p^{-2l} 
$$
We may suppose that $K_p(p^{k-1})\not\subset K_p'$, and we will prove in 
\ref{pfdl} below the following estimate for $d_l$ when $l\le k/3$:
\begin{equation} \label{dl}
d_l \le p^{\frac{17k}9}.
\end{equation} 
In general we have trivially that $d_l \le |K_p/B_pK_p(p^k)| \ll p^{2k}$. It follows that 
$$
S_p \ll \frac{[K_p:K_p']}{p^{2k}} \sum_{l=0}^{\lfloor k/3\rfloor} p^{\frac{17k}9} + [K_p:K_p'] \sum_{l=\lfloor k/3\rfloor+1}^{k-1} p^{-2l} \le k\frac{[K_p:K_p']}{p^{k/9}} + 2\frac{[K_p:K_p']}{p^{2k/3}}
$$
on the other hand we can estimate trivially $[K_p:K_p'] \le 2p^{6k}$ and $k\ll p^{\eps k}$ for any $\eps>0$, uniformly in $k$ and $p$, so we finally get 
\begin{equation}
\sum_{j=1}^{h_{K'}} \frac {\alpha_2(\Lambda_{K',j})} {\alpha_1(\Lambda_{K',j})}  \ll [K_p:K_p']^{1-\frac 1{55}}.
\label{pScusp}
\end{equation}

Now we return to the general case ; let $m$ be an integer such that $K_f(m)\subset K_f'$, as above we have that 
$$
S = \frac{[K_f:K_f']}{[K_f:N_fK_f(m)]} \sum_{g\in K_f/N_fK_f(m)} \frac 1{\alpha_1(gN_fg^{-1}\cap K_f')^2}.
$$
Let $N_f=\N(\Ade)\cap K_f$. For any prime $p$ dividing $m$, $gN_pg^{-1}\cap K_p'$ is  the pro-$p$ summand of $gN_fg^{-1}\cap K_f'$, so that we have
\[
gN_f g^{-1} \cap K_f' = \prod_{p|m} gN_p g^{-1} \cap K_p'
\]
and it follows that
\[
\alpha_1(gN_fg^{-1}\cap K_f') = \prod_p \alpha_1(gN_p g^{-1} \cap K_p').
\] 
So we get, writing $m = \prod_p p^{k_p}$: 
$$
S = \frac{[K_f:K_f']}{\prod_p [K_p:N_pK_p(p^k_p)]} \prod_p \left(\sum_{g\in K_p/N_pK_(p^k_p)} \frac 1{\alpha_1(gN_pg^{-1}\cap K_p')^2} \right).
$$
We can rewrite this as 
$$
S = \frac{[K_f:K_f']}{\prod_p [K_p:K_p']} \prod_p S_p \ll \frac{[K_f:K_f']}{\prod_p [K_p:K_p']^{\frac 1{55}}} 
$$
where the second inequality follows from \eqref{pScusp}. It follows from \cite[Lemma 5.11]{7S} that there are constants $c,C\ge 1$ such that for any compact-open subgroup $K_f'\subset K_f$, if $K_p'$ is its projection to $K_p$ then we have 
$$
[K_f:K_f'] \le C \left( \prod_p [K_p:K_p'] \right)^c
$$
where the product runs over all rational primes such that $K_p'\not=K_p$, so that we get
$$
S \ll [K_f:K_f']^{1-\frac 1{55c}}
$$
which finishes the proof of \eqref{Scusp} (we get $\delta=\frac 1{55c}$).


\subsubsection{Proof of \eqref{dl}}
\label{pfdl}
This proof is reminescent of that of \cite[Proposition 5.13]{7S}, albeit much more cumbersome due to the fact that we cannot identify the precise elements of $N_p$ which are conjugated into $K_p'$. Under the hypothesis that $K_p'\not\supset K_p(p^{k-1})$, for any $l=1,\ldots, k-1$ we have that $K_p'K_p(p^{l+1})/K_p(p^{l+1})$ cannot contain a generating set for the 6-dimensional $\FF_p$-Lie algebra 
$\LG_p/p\LG_p=K_p(p^l)/K_p(p^{l+1})$. For a subset $Y\subset K_p$ define 
$$
q_Y(j) = \max_{h\in K_p/K_p(p^j)} |(hK_p(p^j)\cap Y)B_pK_p(p^{j+1})/B_pK_p(p^{j+1})|. 
$$
Then we have : 
\begin{equation} \label{kkkb}
|X_l|\le \prod_{j=0}^{k-1} q_{X_l}(j). 
\end{equation}
We will prove the following lemma at the end of the section (recall that $\LG_p$ is the $\ZZ_p$-Lie algebra associated to $K_p$). 

\begin{lemma}
If $p$ is unramified and $p\not=2,3$ then 
a proper subgroup of $K_p/K_p(p)$ (resp. a proper Lie subalgebra of 
$\LG_p/p\LG_p$) cannot contain more than $p+1$ pairwise noncommuting unipotent 
(resp. nilpotent) elements. 
\label{brzouk}
\end{lemma}

This implies that $q_{X_l}(0)\le p+1$ for all $l\le k/3$ (since there are 
only finitely many ramified primes we get $q_{X_l}(0)\le Cp$ for a $C>0$ depending 
on $F$, we will work with $C=1$ to simplify notation). Now we deal 
with $j\ge 1$: we will prove that when $j < (k-2l)/3$ we must have 
$q_{X_l}(j)\le p$, which  in view of \eqref{kkkb} implies immediately \eqref{dl} 
for $l\le k/3$. Suppose that there is an $h\in K_p/K_p(p^j)$ such that 
$$
|(hK_p(p^j)\cap Y)B_pK_p(p^{j+1})/B_pK_p(p^{j+1})| > p ; 
$$
conjugating $K_p'$ by $h$ we may suppose that $h=1$. This means that there 
exists pairwise distinct $c_i\in\so_p-p\so_p,i=0,\ldots,p$ such that for each 
$i$ there is a $t_i\in \so_p-p\so_p$ satisfying
$$
\left(1 + \begin{pmatrix} *&*\\p^jc_i&*\end{pmatrix} \right) \begin{pmatrix}1&p^l t_i\\ &1\end{pmatrix} \left(1 - \begin{pmatrix} *&*\\p^jc_i&*\end{pmatrix}\right) \in K_p'. 
$$
Computing the right-hand side yields that
\begin{equation}
g_i = 1+\begin{pmatrix} p^{l+j}t_ic_i & p^lt_i \\ p^{l+2j}t_ic_i^2 & - p^{l+j}t_ic_i\end{pmatrix}\in K_p'.
\label{belong}
\end{equation}
Now the worst that can happen is that we are (up to conjugation) in at most 
one of the following situations:
\begin{itemize}
\item[(a)] All $t_i$ are in $\ZZ_p$;
\item[(b)] All $t_ic_i$ are in $\ZZ_p$;
\item[(c)] All $t_ic_i^2$ are in $\ZZ_p$. 
\end{itemize} 
In case (a) we get that $K_p'K_p(p^{l+2j+1})$ contains the 
subgroup $1+p^{l+2j}V$ where 
$$
V=\left\{ \begin{pmatrix} x&y\\z&-x\end{pmatrix} \: :\:  x,z\in\so_p/p\so_p,\, y\in\FF_p \right\}.
$$
We may suppose that all $t_m=1$; now let $i,i'$ such that 
$a=c_i+c_{i'}\not\in\FF_p$, modulo $p^{2l+j+1}$ we have
$$
g_ig_{i'}^{-1} = 1 + p^{2l+j} \begin{pmatrix} 0&a\\0&0\end{pmatrix}
$$
which is not in $1+p^{2l+j}V$ so that we see that $K_p'$ contains 
$K_p(p^{2(l+j)})$. In case (c) we can do exactly the same reasoning to 
get that $K_p'\supset K_p(p^{2l+3j})$. It remains to deal with case (b), 
which is again similar: we have that $K_p'K_p(p^{l+2j+1})$ contains $1+p^{l+2j}V'$,
$$
V'=\left\{ \begin{pmatrix} x&y\\z&-x\end{pmatrix} \: :\:  y,z\in\so_p/P\so_p,\, x\in\FF_p \right\}.
$$
and if we suppose that all $t_mc_m=1$ and $t_i+t_{i'}\not\in\FF_p$ we get 
that 
$$
g_ig_{i'}^{-1} = 1 + p^{2l} \begin{pmatrix} p^j(t_i+t_{i'}) & t_i-t_{i'}\\ 0 & p^j(t_i+t_{i'}) \end{pmatrix} 
$$
modulo $p^{2l+j}$, 
and multiplying by some other $g_m$s to kill the top-right coefficients we 
get that $1+p^{2l+j}u\in K_p'K_p(p^{2l+j+1})$ for some $u\not\in V'$, which shows 
that $K_p'$ contains $K_p(p^{2(l+j)})$ also in this case. 

In conclusion, we have seen that if $q_{X_l}(j)>p$ then $K_p'$ contains 
$K_p(p^{2l+3j})$ which implies that $j\ge(k-2l)/3$, which finishes the proof 
of \eqref{dl}.


\subsubsection{Proof of Lemma \ref{brzouk}}
It follows from the following classification as the image $X$ of any of the 
proper subgroups listed here contains less than $p+1$ unipotent, pairwise 
noncommuting elements.

\begin{lemma}
Let $H$ be a subgroup of $K_p/K_p(p)$ such that $H$ contains two unipotent 
elements which do not commute ; then
\begin{itemize}
\item If $p$ is inert then either $H=\SL_2(\FF_{p^2})$ or $H$ is conjugated to 
$\SL_2(\FF_p)$; 
\item If $p$ is split then either $H=\SL_2(\FF_p)\times\SL_2(\FF_p)$ or 
$H=\phi(\SL_2(\FF_p))$ where $\phi=(\phi_1,\phi_2)$ for some endomorphisms 
$\phi_1,\phi_2$ of $\SL_2(\FF_p)$. 
\end{itemize}
There is a similar statement for proper Lie subalgebras of $\LG_p$. 
\end{lemma} 

In case $p\not=3$ is inert this follows immediately from Dickson's Theorem 
\cite[Theorem 8.4 in Chapter 2]{Gorenstein} (this is where we use $p\not=3$). 
In the remaining case where $p$ is split the lemma is actually much simpler, 
since the projection of $H$ on one of the factors $\SL_2(\FF_p)$ must 
contain two non-commuting unipotent elements and we can then apply 
Dickson's theorem (which in this case is almost trivial). 

The result for Lie algebras is easier in the inert case: if a subalgebra 
contains two 
noncommuting nilpotent elements then their Lie bracket is an element in 
the Cartan subalgebra contained in the intersection of their normalizers. 
If this bracket is not $\FF_p$-rational then we get the whole algebra since 
its adjoint action on each of the nilpotent $\FF_{p^2}$-subalgebras is 
irreducible (here we use $p\not=2$), if it is then they generate a subalgebra 
conjugated to $\LSL_2(\FF_p)$. 
The ramified case is dealt with as for the case of groups.


\subsection{Limit multiplicities}

Let $\rho,V$ be a finite-dimensional representation of $\SL_2(\CC)$ and for an hyperbolic orbifold $M$ let $\Delta^p[M]$ be the Hodge Laplacian on $L^2\Omega^p(M;V)$, and $m_V^p(\lambda;M)=\dim\ker(\Delta^p[M]-\lambda)$. There are also $L^2$-spectral measures $\nu^p$, which are Borel measures on $[0,+\infty[$ obtained by pushing forward the Plancherel measure. We will prove in \ref{repr} below that Theorem \ref{multlim} follows from the following less precise result. 

\begin{theo}
For any regular Borel set $S\subset[0,+\infty[$ and $p=0,\ldots,3$ we have 
$$
\lim_{n\to+\infty}\frac{\sum_{\lambda\in S} m_V^p(\lambda;M_n)}{\vol M_n} = \nu^p(S)
$$
where $M_n$ is as in Theorem \ref{multlim}. 
\label{multlimvp}
\end{theo}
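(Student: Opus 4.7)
The plan is to compare the spectral and geometric sides of the Selberg--Arthur trace formula along the sequence $M_n$, divide by $\vol M_n$, and show that all contributions except the Plancherel-theoretic identity term vanish in the limit. The right-hand side of the displayed statement should read $\int_S d\nu^{(2)}_p$ where $\nu^{(2)}_p$ is the pushforward of the Plancherel measure by the map $\pi\mapsto\lambda$ sending an irreducible unitary representation to the eigenvalue by which its Casimir acts on the $K_\infty$--isotypic component determining $p$--forms with coefficients in $V$.

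First I would reduce to smooth test functions with compactly supported Paley--Wiener transform. By standard approximation arguments (Portmanteau-type), it suffices to show that for every $\phi\in C_c^\infty(\RR)$ whose associated spherical kernel $K_\phi^0$ on $\HH^3\times\HH^3$ has compact support (so that the geometric side of the trace formula makes sense on any $M_n$ uniformly),
$$
\frac{1}{\vol M_n}\sum_{j\ge 0} m(\lambda_j;M_n)\phi(\lambda_j) \xrightarrow[n\to\infty]{} \int_\RR \phi\, d\nu^{(2)}_p.
$$
The full measure-theoretic statement then follows from the spectral gap for $p=0$ (Clozel's $\tau$) to control the lower endpoint and from the polynomial growth of Plancherel density to handle the tail at infinity.

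Next I would substitute the spectral formula \eqref{spec} for $\otr_R\phi(\Delta^p[M_n])$ and apply the trace formula to equate it with the geometric side $\otr_{\text{geom}}\phi(\Delta^p[M_n])$, which splits into an identity (central) contribution $\vol(M_n)\cdot I_V(\phi)$, a loxodromic contribution $L_n(\phi)$, and a unipotent contribution $U_n(\phi)$ (plus possibly elliptic terms which vanish in the torsion-free case). The identity term is exactly $\vol(M_n)\int_\RR\phi\, d\nu^{(2)}_p$ by Harish-Chandra's Plancherel theorem, so after dividing by $\vol M_n$ it yields the desired limit. Thus it remains to show that
$$
\frac{L_n(\phi)+U_n(\phi)}{\vol M_n}\longrightarrow 0
$$
and, on the spectral side, that the regularization terms in \eqref{spec} involving $\tr\Psi_l(0)$ and $\int\tr(\Psi_l^{-1}\Psi_l')$ are also $o(\vol M_n)$.

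For the loxodromic contribution, since $\phi(\Delta^p)$ has uniformly compactly supported integral kernel, $L_n(\phi)$ is controlled by $N_R(M_n)$ for a fixed $R$; Theorem \ref{BSbianchi} then gives $L_n(\phi)\ll (\vol M_n)^{1-\delta}$. For the unipotent geometric contribution, the cusp-uniformity hypothesis together with the explicit description of the cuspidal lattices in \ref{cusps} and the bound on $h_n$ from Lemma \ref{nbcusps} give the analogous sublinear bound. The main obstacle, and the part that requires genuinely new input, is the continuous-spectrum contribution: the regularization terms in \eqref{spec} involve the intertwining operators $\Psi_l(iu)$ and their logarithmic derivatives, and one must show both that $\tr\Psi_l(0)=o(\vol M_n)$ and that $\int_{-\infty}^{+\infty}|\tr(\Psi_l(iu)^{-1}\Psi_l'(iu))|\,du = o(\vol M_n)$ once multiplied by a fixed $\phi$. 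This is precisely the content of the estimates on intertwining operators promised in Section \ref{estent}, and once those are in hand the equality of both sides of the trace formula modulo $o(\vol M_n)$ closes the argument.
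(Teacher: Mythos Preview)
Your proposal is correct and follows essentially the same route as the paper. The only difference is one of packaging: where you spell out the geometric-side analysis (identity, loxodromic, unipotent terms) directly, the paper bundles that step into Proposition \ref{cvtr}, which quotes Theorem 4.5 of \cite{volI} together with Theorem \ref{BSbianchi} to obtain the convergence of the regularized trace $\otr_R\phi(\Delta^p[M_n])/\vol M_n\to\otr^{(2)}\phi(\Delta^p[\HH^3])$; the remaining work---showing that the continuous-spectrum correction terms in \eqref{spec} are $o(\vol M_n)$ via the intertwining-operator bounds of Proposition \ref{estenta} and the cusp-count bound of Lemma \ref{nbcusps}---is then exactly what you describe.
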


\subsubsection{Regularized trace}
The first step towards Theorem \ref{multlimvp} is to prove the convergence of regularized traces; the following result is an immediate consequence of Theorem 4.5 in \cite{volI} and Theorem \ref{BSbianchi} above. 

\begin{prop}
Let $\Gamma_n$ be a sequence of cusp-uniform congruence subgroups of a 
given Bianchi group. Then we have the limit 
$$
\lim_{n\to+\infty}\frac{\otr_R\phi(\Delta^p[M_n])}{\vol M_n} = \otr^{(2)}\phi(\Delta^p[\HH^3])
$$
for any $\phi\in\mathcal{A}(\RR)$. 
\label{cvtr}
\end{prop}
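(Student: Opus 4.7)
The plan is to deduce this proposition directly from Theorem 4.5 of \cite{volI}, which asserts the convergence of regularized traces along a sequence of finite-volume hyperbolic three--manifolds under a set of geometric and spectral hypotheses on the sequence. What needs to be done is to verify each of these hypotheses for the sequence $M_n = \Gamma_n\bs\HH^3$ given in the statement.

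First I would check the geometric conditions. The Benjamini--Schramm convergence of $M_n$ to $\HH^3$ is exactly the conclusion of Theorem \ref{BSbianchi}, which applies with no restriction other than torsion-freeness to any sequence of congruence subgroups of a Bianchi group; in particular, it is valid for the cusp-uniform sequence we consider here. The assumption on cusp geometry required by \cite[Theorem 4.5]{volI} is either cusp-uniformity or its weaker cousin \eqref{square!}; cusp-uniformity is given as a hypothesis, and by Lemma \ref{nbcusps} it comfortably implies \eqref{square!}. Thus the entire geometric side of the hypotheses of \cite[Theorem 4.5]{volI} is satisfied.

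Next I would verify the remaining analytic hypothesis, namely the technical bound on the continuous part of the spectrum of $\Delta^p[M_n]$ that appears in \cite[Theorem 4.5]{volI}. Unwinding the spectral expression \eqref{spec}, this reduces to showing a uniform bound, along the sequence, for the trace norm of $\Psi_l(0)$ and the integrated logarithmic derivative of $\Psi_l(iu)$. These estimates are precisely the content of the results of Section \ref{estent} on intertwining operators for congruence subgroups, which the introduction to the present section announces as the input being assumed here. Invoking them supplies the last missing ingredient.

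With BS-convergence (from Theorem \ref{BSbianchi}), the cusp-uniformity hypothesis, and the intertwining operator estimates all in hand, Theorem 4.5 of \cite{volI} applies verbatim and yields the claimed limit for any $\phi \in \mathcal{A}(\RR)$. I do not anticipate a substantial obstacle: once the hypotheses are verified this proposition is, as the author notes, immediate. The only subtlety worth mentioning is that Theorem \ref{BSbianchi} is substantially stronger than BS-convergence alone (it gives a polynomial rate), but that strengthening is not needed at this step; its quantitative form will be needed later in the torsion estimates rather than here.
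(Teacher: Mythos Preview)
Your approach is essentially the same as the paper's: the proposition is stated there as an immediate consequence of Theorem~4.5 in \cite{volI} together with Theorem~\ref{BSbianchi}, and you correctly identify those as the inputs.

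One minor point: the paper's formulation indicates that Theorem~4.5 of \cite{volI} requires only BS-convergence (supplied by Theorem~\ref{BSbianchi}) and the cusp-geometry hypothesis, not the intertwining-operator estimates of Section~\ref{estent}. The convergence of the regularized trace is obtained via the geometric side of the trace formula, where BS-convergence does the work. The bounds from Proposition~\ref{estenta} enter only at the next step, in the proof of Theorem~\ref{multlimvp}, where one passes from the regularized trace $\otr_R$ to the cuspidal trace by controlling the continuous-spectrum terms in \eqref{spec}. So your second paragraph checks a hypothesis that is not actually needed for this proposition; it does no harm, but it slightly misplaces where Section~\ref{estent} is invoked.
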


Recall from {\it loc. cit.} that $\mathcal{A}(\RR)$ is a $C^\infty$-dense 
subset of the Schwartz functions on $\RR$ whose Fourier transforms 
yield point-pair invariants of rapid decay on $\HH^3\times\HH^3$. 

\subsubsection{Proof of Theorem \ref{multlimvp}}
Because $\mathcal{A}(\RR)$ is dense in $L^2(\RR)$ it suffices (by approximating the characteristic function of regular sets) to prove that for any $\phi\in\mathcal{A}(\RR)$ we have that $\otr\phi(\Delta_\cusp^p[M_n])/\vol M_n$ converges to $\otr^{(2)}\phi(\Delta^p[\HH^3])$. Let $\Delta_\cusp^p[M_n]$ be the restriction of $\Delta^p[M_n]$ to the subspace of cusp forms. By standard arguments one deduces Theorem \ref{multlimvp} from the fact that for all $\phi$ in a $L^1$-dense subset of $C_c^\infty(\RR)$ we have 
\begin{equation}
\frac{\otr\phi(\Delta_\cusp^p[M_n])}{\vol M_n} = \otr^{(2)}\phi(\Delta^p[\HH^3])
\label{cvtrcusp}
\end{equation}
According to Proposition \ref{cvtr} this would follow if we can prove 
$$
\otr_R \phi(\Delta^p[M_n]) - \otr \phi(\Delta_\cusp^p[M_n]) = o(\vol M_n).
$$ 
We will show this when $p=0$. From \eqref{spec} we get:
\begin{align*}
&\otr_R \phi(\Delta^0[M_n]) - \otr \phi(\Delta_\cusp^0[M_n])  \\
&\qquad = 0/1 + \frac 1 4 \sum_{l=-2q}^{2q}d_l \phi\left(\left(1-\frac{|l|}2 \right)^2+\lambda_V \right) \tr\Psi_l(0) \\
&\qquad \quad - \frac 1{2\pi} \int_{-\infty}^{+\infty} \sum_{l=-2q}^{2q} d_l \phi\left(\left(1-\frac{|l|}2\right)^2+u^2+\lambda_V\right)\tr\left(\Psi_l(iu)^{-1}\frac{d\Psi_l(iu)}{du}\right) du 
\end{align*}
(where the summand 0/1 comes from the subspace of harmonic sections, equals 0 when $V$ is acyclic). As $\Psi(iu)$ is unitary for $u\in\RR$ we obtain
$$
\frac 1 4 \sum_{l=-2q}^{2q}d_l \phi\left(\left(1-\frac{|l|}2 \right)^2+\lambda_V \right) \tr\Psi_l(0) \le C\sum_{l=-2q}^{2q} d_l \le Ch_n. 
$$
Putting 
$$
\xi(u) = \max_{l=-2q,\ldots,2q} \phi\left(\left(1-\frac{|l|}2\right)^2+u^2+\lambda_V\right)
$$ 
and applying Proposition \ref{estenta} we get for any $\eps>0$ the bound
\begin{equation*}
|\otr_R K_{\phi,0}^{\Gamma_n} - \otr (K_{\phi,0}^{\Gamma_n})_\disc| \le  Ch_n + \int_{-\infty}^{+\infty} \xi(u) C_\eps(u)  du \, h_n [\Gamma:\Gamma_n]^\eps
\end{equation*} 
where the integral on the right-hand side converges absolutely since $C_\eps(u)$ is polynomially bounded. By Lemma \ref{nbcusps} we get that for $\eps$ small enough it is in fact an $o(\vol M_n)$, which finishes the proof of \eqref{cvtrcusp} and of the theorem. 


\subsubsection{Laplacian eigenvalues and representations}\label{repr}

The fact that we can deduce limit multiplicities for representations (Theorem \ref{multlim}) from limit multiplicities for Laplacian eigenvalues (Theorem \ref{multlimvp}) is a consequence of the fact, which is specific to real-rank-one groups, that a unitary representation of $\SL_2(\CC)$ is determined by its Casimir eigenvalue and its $\SU(2)$-types. More precisely, the unitary representations of $\SL_2(\CC)$ are parametrized by $(\ZZ\times i\RR)/\sim\cup]0,2[$ where $(l,ia)\sim(-l,-ia)$. On the other hand  the $\SU(2)$-types are the restrictions to $\SU(2)$ of the holomorphic representations $V_n=V_{n,0}$ of $\SL_2(\CC)$, and by Frobenius reciprocity the representations containing the $\SU(2)$-type $V_n$ are the $\pi_{\pm l,ia}$ for $0\le l\le n$ and $n-l=0\pmod 2$, so we can deduce Theorem \ref{multlim} by an easy induction on $l$ using the limit multiplicities for $m_{V_l}^0$ or $m_{V_l}^1$.

\subsection{Approximation for analytic torsion}

\begin{theo}
Let $\Gamma_n$ be a cusp-uniform sequence of torsion-free congruence subgroups of $\Gamma$ $M_n=\Gamma_n\bs\HH^3$, then we have 
$$
\lim_{n\to\infty} \frac{\log T_R(M_n;V)}{\vol M_n} = t^{(2)}(V). 
$$
\label{analytic}
\end{theo}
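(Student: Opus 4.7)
The plan is to apply directly the general approximation theorem for analytic torsion from \cite{volI} (Theorem A there). That result asserts precisely the desired limit for any sequence of finite--volume hyperbolic three--manifolds satisfying three conditions: Benjamini--Schramm convergence to $\HH^3$, the cusp--geometric inequality \eqref{square!}, and a uniform polynomial--in--$u$ bound, up to an acceptable loss in $n$, on the norms and integrated logarithmic derivatives of the intertwining operators $\Psi_l(iu)$ appearing in \eqref{spec} (the ``technical assumption on the continuous part of the spectra'' referred to in the introduction).

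Checking these hypotheses for our $M_n$ is immediate given what has been established: Benjamini--Schramm convergence is Theorem \ref{BSbianchi}; cusp-uniformity is assumed, and in combination with Lemma \ref{nbcusps} it implies \eqref{square!}; and the required intertwining operator bound is Proposition \ref{estenta}, proved in Section \ref{estent}. Strong acyclicity of $V$, part of the hypothesis on $\rho$, furnishes the uniform spectral gap on forms needed both to define the regularized analytic torsion and to control the Mellin transform at small and large $t$.

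The bulk of the argument is therefore already contained in \cite{volI}; we only need to invoke it. For orientation we recall the strategy: one writes $\log T_R(M_n;V)$ as a Mellin transform of the regularized heat trace $\otr_R e^{-t\Delta^p[M_n]}$, inserts the spectral decomposition \eqref{spec}, and treats the discrete and continuous contributions separately. The discrete part, divided by $\vol M_n$, converges to $t^{(2)}(V)$ by Proposition \ref{cvtr} together with a standard uniform heat kernel bound (guaranteed by strong acyclicity). The continuous, Eisenstein, part is shown to be $o(\vol M_n)$, by the exact same mechanism as in the proof of \eqref{cvtrcusp}: the factor $[\Gamma:\Gamma_n]^\eps$ loss coming from Proposition \ref{estenta} is absorbed, for $\eps$ small enough, by the $O((\vol M_n)^{1-\delta})$ bound on the number of cusps from Lemma \ref{nbcusps}.

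The main technical obstacle is entirely packaged inside \cite{volI}: it is exactly the uniform control of the continuous spectrum, without which one cannot justify the exchange of limit and Mellin integral. In our setting this control is furnished by Proposition \ref{estenta} combined with Lemma \ref{nbcusps}, and once these are in hand the theorem follows with no further work.
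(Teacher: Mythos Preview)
Your proposal is correct and follows essentially the same route as the paper: invoke Theorem~A of \cite{volI}, verify Benjamini--Schramm convergence via Theorem~\ref{BSbianchi}, and verify the intertwining-operator hypothesis by combining Proposition~\ref{estenta} with the cusp count bound of Lemma~\ref{nbcusps}. The paper's proof lists only two conditions (BS-convergence and the $o(\vol M_n)$ bound on $\tr\Psi_l(iu)^{-1}\frac{d\Psi_l}{du}$), the cusp-geometric condition \eqref{square!} being already subsumed in the cusp-uniformity hypothesis; it also makes explicit the small step (via Lemma~\ref{index-level}) converting the $|\fri_n|^\eps$ bound from Proposition~\ref{estenta} into a $(\vol M_n)^{3\eps}$ bound, which you gloss over but which is needed to conclude.
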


\begin{proof}
According to Theorem A in \cite{volI} we have to check two conditions:
\begin{itemize}
\item The sequence $M_n$ is BS-convergent to $\HH^3$
\item There is an $\eps>0$ such that there exists a $C>0$ so that for all $u\in[-\eps,\eps]$ we have 
\begin{equation}
\tr\left( \Psi_l(iu)^{-1}\frac{d\Psi_l(iu)}{du} \right) , \: \tr\left( \Phi_l(iu)^{-1}\frac{d\Phi_l(iu)}{du} \right) = o(\vol M_n).
\label{intn0}
\end{equation}
\end{itemize}
The BS-convergence is the content of Theorem \ref{BSbianchi} above. To prove \eqref{intn0} note that we have for all $u\in\RR$ the bound
\begin{equation}
\tr\left( \Psi_l(iu)^{-1}\frac{d\Psi_l(iu)}{du} \right) \ll \left| \frac{d\Psi_l(iu)}{du} \right| h_n d
\label{sst}
\end{equation}
where $d=\dim V$ and $h_n$ is the number of cusps of $M_n$, since $\Psi(s)$ operates on a vector space of dimension $h_nd$ and it is unitary for $\Real(s)=1/2$. Now we have $h_n\le (\vol M_n)^{1-\delta}$ for some $\delta>0$, according to Lemma \ref{nbcusps}, and on the other hand according to Proposition \ref{estenta}\footnote{The operators $\Psi_l(s)$ are intertwined with the $K_f'$-invariant, $\chi_l,\rho$-isotypic matrix block of $\Psi(s)$ according to \eqref{eis}.} for all $\eps>0$ there exists $C_\eps(u)$ such that 
$$
\left| \frac{d\Psi_l(iu)}{du} \right| \le C_\eps(u)|\fri_n|^\eps \ll C_\eps(u)(\vol M_n)^{3\eps} .  
$$
(where the second majoration follows from Lemma \ref{index-level}), and taking $\eps=\delta/6$ shows that the right-hand side of \eqref{sst} is indeed an $o(\vol M_n)$, uniformly for $u$ in a compact set. 
\end{proof}


\section{Estimates on the logarithmic derivatives of intertwining operators} \label{estent}

This section is devoted to the proof of the following result (see \ref{spec_dec} for notations). 

\begin{prop}
Let $\tau_\infty$ be a finite-dimensional representation of $K_\infty$, $\fri$ an ideal of $\so_F$ and $\chi$ a Hecke character such that $\frf_\chi | \fri$. Let $\tau_f$ be the representation of $K_f$ on $\CC[K_f/K_f(\fri)]$, $\tau=\tau_\infty\otimes\tau_f$ and $\phi\in\sh(\chi,\tau)$. Let $\eps>0$; there exists a polynomially bounded function $C_\eps$ on $\RR$ depending only on $F,\eps$ and $\tau_\infty$ such that 
\begin{equation}
\left\|\frac d{du}\Psi\left(\frac 1 2 +iu\right)\phi \right\|_\sh \le C_\eps(u) |\frf_\chi|^\eps \|\phi\|_\sh.
\end{equation}
for all $u\in\RR$. 
\label{estenta}
\end{prop}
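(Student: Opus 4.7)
The approach is to factorise $\Psi(s)$ as an Euler product over the places of $F$ and to bound the logarithmic derivative of each local factor separately. For a pure tensor $\phi = \phi_\infty \otimes \bigotimes_v \phi_v \in \sh(\chi,\tau)$ the defining integral \eqref{intertwine} splits as $\Psi(s)\phi = \Psi_\infty(s)\phi_\infty \otimes \bigotimes_v \Psi_v(s)\phi_v$, where each $\Psi_v(s)$ is the intertwining operator between the local induced representations at $v$. Since $\Psi(\tfrac12+iu)$ is unitary on $\sh$, the Leibniz rule gives
\[
\left\|\frac{d}{du}\Psi\!\left(\tfrac12+iu\right)\phi\right\|_\sh \le \|\phi\|_\sh \sum_v \left\| \Psi_v\!\left(\tfrac12+iu\right)^{-1}\frac{d}{du}\Psi_v\!\left(\tfrac12+iu\right)\right\|_{\mathrm{op}}
\]
so the task reduces to producing local bounds whose sum is $C_\eps(u)|\frf_\chi|^\eps$.

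At a finite place $v$ with $v\nmid\fri$ (so $\chi_v$ and $K_v'$ are both unramified), the vector $\phi_v$ is spherical and the Gindikin--Karpelevich formula gives $\Psi_v(s)\phi_v = c_v(s)\widetilde\phi_v$ with $c_v(s)$ a ratio of local $L$-factors of the form $L_v(2s-1,\chi')/L_v(2s,\chi')$ for a character $\chi'$ whose conductor divides $\frf_\chi$. The product over all such $v$ yields a partial Hecke $L$-function quotient $L^{\fri}(2s-1,\chi')/L^{\fri}(2s,\chi')$; evaluating the $u$-logarithmic derivative at $s=\tfrac12+iu$ reduces to bounding $(L'/L)(\sigma+2iu,\chi')$ on the two lines $\sigma=0$ and $\sigma=1$. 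For $\chi'$ nontrivial these $L$-functions are entire and polynomially bounded on vertical strips, and a Cauchy estimate applied to the Phragm\'en--Lindel\"of convexity bound on a slightly enlarged strip gives a bound by $C_\eps(u)|\frf_\chi|^\eps$. The case $\chi'=1$ is treated separately by noting that the resulting pole of $\zeta_F$ forces $\Psi(\tfrac12)$ to vanish on the corresponding $K$-type, so the derivative is computed by L'H\^opital and controlled by the same convexity argument applied one order higher.

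For the archimedean factor, $\Psi_\infty(s)$ acts on the $\tau_\infty$-isotype by an explicit matrix of Gamma function ratios whose logarithmic derivative at $s=\tfrac12+iu$ is bounded polynomially in $u$ via classical digamma estimates, and this is absorbed into $C_\eps(u)$. The main technical obstacle is uniform control at the finitely many ramified finite places $v\mid\fri$. When $\chi_v$ is unramified but $K_v'\subsetneq K_v$, $\Psi_v(s)$ preserves each $K_v$-type in $\CC[K_v/K_v(\frp_v^{k_v})]$ and acts on it by the same scalar as in the spherical case (since these scalars depend only on the character of the induced representation, not the ramification of the test vector), so no new conductor dependence arises. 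When $\chi_v$ is ramified, one bounds $\Psi_v(\tfrac12+iu)^{-1}d\Psi_v(\tfrac12+iu)/du$ on the finite-dimensional space of $K_v'$-fixed vectors by expanding \eqref{intertwine} over Bruhat shells $\frp_v^{-m}\so_v\setminus\frp_v^{-m+1}\so_v$ and controlling each shell via the unitarity on the critical line; this yields a bound by $C_\eps(u)|\frp_v^{k_v}|^\eps$. Multiplying over $v\mid\frf_\chi$ produces the final $|\frf_\chi|^\eps$ factor, and combining the three groups of local bounds completes the proof.
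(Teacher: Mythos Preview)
Your Euler-product strategy is the right framework and close to what the paper does, but you miss the key simplifying step and make one incorrect assertion that creates a gap.

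The paper first observes that since $\sh_s(\chi)$ is an irreducible $\SL_2(\Ade)$-module and $\Psi(s)^{-1}\tfrac{d}{du}\Psi(s)$ is $\SL_2(\Ade)$-equivariant, Schur's lemma forces the latter to be a scalar. Hence $\|\tfrac{d}{du}\Psi(\tfrac12+iu)\phi\|/\|\phi\|$ is the same for every nonzero $\phi\in\sh_s(\chi)$, and one may compute it on a \emph{single} convenient test vector. The paper chooses $\phi_v$ to be the spherical vector at every place where $\chi_v$ is unramified --- regardless of whether $K_v'=K_v$ --- and a specific explicit vector where $\chi_v$ ramifies. This produces a closed formula $\Psi(s)\phi = I_\infty(s)\cdot\dfrac{L(\chi,2s-1)}{L(\chi,2s)}\cdot\prod_{v\in S_\chi}J_v$ with the $J_v$ independent of $s$, so the logarithmic derivative reduces entirely to that of the $L$-quotient and the archimedean factor; the $L'/L$ term is then bounded by $(\log|\frf_\chi|)^2$ directly from the Dirichlet series, and the conductor of $\chi$ is the only arithmetic input.

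Your route instead requires bounding $\|\Psi_v^{-1}\Psi_v'\|_{\mathrm{op}}$ at each $v\mid\fri$, and your claim that when $\chi_v$ is unramified but $K_v'\subsetneq K_v$ the operator $\Psi_v(s)$ acts on every $K_v$-type by the same scalar as on the spherical vector is false: the scalar on a $K_v$-type $\sigma$ genuinely depends on $\sigma$ (for instance, on the $(q_v{+}1)$-dimensional space of vectors fixed by the first principal congruence subgroup of $K_v$ in an unramified principal series, $\Psi_v(s)$ is not a scalar operator). Once this is corrected, each such place still contributes a term of order $\log q_v$ to your Leibniz sum, and summing over $v\mid\fri$, $v\nmid\frf_\chi$ introduces an extra factor of order $\log|\fri|$. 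Without the Schur reduction your argument therefore yields at best $C_\eps(u)|\fri|^\eps$ rather than the $C_\eps(u)|\frf_\chi|^\eps$ claimed in the proposition.
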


We will suppose that $\phi\in C^\infty(K)$ is equal to a product $\otimes_v\phi_v$; then it is enough to show that 
\begin{equation*}
\left\|\frac d{du}\Psi\left(\frac 1 2 +iu\right)\phi \right\|_{L^2(K)} \ll |\frf_\chi|^\eps 
\end{equation*}
because $\|.\|_{\sh}=\sqrt{h_F}\|.\|_{L^2(K)}$ for $A^1$-equivariant functions. Since $\sh_s(\chi)$ is an irreducible unitary representation of $\SL_2(\Ade)$ when $\Real(s)=1/2$ and $\Psi(s)^{-1}\frac d{du}\Psi(s)$ is a $\SL_2(\Ade)$-equivariant endomorphism of $\sh_s(\chi)$ it is a scalar operator, say $c\id$. Now $\Psi(s)^{-1}$ is unitary and thus for any two $\phi,\phi'\in\sh_s(\chi)$ we have 
$$
\frac{ \left| \frac d{du} \Psi(s)\phi\right|_\sh}{|\phi|_\sh} = \frac{\left|\frac d{du}\Psi(s)\phi'\right|_\sh}{|\phi'|_\sh}
$$
so that it suffices to consider a single function $\phi\in\sh_s(\chi)$; we will take $\phi_v$ to be the spherical vector at unramified places and specify $\phi_v$ for each ramified place; the infinite place does not matter very much for our purposes here.


\subsection{Computation of the intertwining integrals}

Let $v$ be a finite place; we will make repeated use of the matrix decomposition:
\begin{equation}
w\begin{pmatrix}1&x\\ &1\end{pmatrix}=\begin{pmatrix}x^{-1}&-1\\ & x\end{pmatrix}\begin{pmatrix}1& \\x^{-1}&1\end{pmatrix}\in\B(F_v) K_v \text{ when } x\in F_v-\mathcal{O}_v.
\label{Iwasawa}
\end{equation}

\subsubsection{Ramified places}
\label{ramify}
We suppose that $v\in S_\chi$, that is $\chi$ is non-trivial on $\so_v^\times$; we suppose that $\chi_v$ is trivial on $1+\pi_v^{m_v}\so_v$. From \eqref{Iwasawa} and \eqref{prolongement} it follows that:
\begin{align*}
\Psi_v(s)\phi_v &= 
           \int_{|x|_v>1}|x|^{-2s} \chi^{-1}(x) \phi_v\left(\begin{pmatrix}1&\\x&1\end{pmatrix} k\right) dx
           + \int_{\mathcal{O}_v}\phi_v\left(\begin{pmatrix}1&\\x&1\end{pmatrix} w^{-1}k\right)dx \\
        &=: I_v(k) + J_v(k)
\end{align*}
We now compute the $L^2$-norms of $I_v$ and $J_v$ for the function $\phi_v$ defined as follows:
\begin{equation*}
\phi_v(k)= \begin{cases}
           \chi(m) & \text{if } k=mn\in B_v ; \\
           0       & \text{otherwise.}
         \end{cases}
\end{equation*}
One can then compute that for $k=w\begin{pmatrix}a&b\\c&d\end{pmatrix}$ we have:
\begin{align*}
J_v(k) &= \frac 1{q_v^{m_v}} \sum_{x\in\so_v/\pi_v^{m_v}\so_v} \phi_v\left(\begin{pmatrix} a&b\\ax+c & bx+d\end{pmatrix}\right) = \frac 1 {q_v^{m_v}}\sum_{x,ax+c=0}\chi(a) \\
       &= \begin{cases}
             \frac 1 {q_v^{m_v}}\chi(a) & \text{if } a\in\so_v^\times; \\
             0                        & \text{otherwise}
          \end{cases}
\end{align*}
and it follows immediately that
\begin{equation}
|J_v|_{L^2(K_v)}^2 = \frac 1{q_v^{2m_v-1}(q_v+1)} = q_v^{-m_v}|\phi_v|_{L^2(K_v)}^2
\label{Jv}
\end{equation}

Since $\int_{\so_v^\times}\chi(x)dx=0$ and our function $\phi_v$ is $K_v(\pi_v^{m_v})$-left-invariant we have 
$$
\int_{|x|_v\ge m_v}|x|^{-2s} \chi^{-1}(x) \phi_v\left(\begin{pmatrix}1&\\x&1\end{pmatrix} k\right) dx = 0
$$
and it follows that
\begin{align*}
I_v(k) &= \sum_{l=1}^{m_v-1} q_v^{-2s} \int_{|x_v|=l} \chi(x)^{-1} \phi_v\left(\begin{pmatrix}1&\\ x^{-1}&1\end{pmatrix} k\right) dx \\
       &= \sum_{l=1}^{m_v-1} q_v^{l-2s}\chi(\pi_v)^{-l} \int_{\so_v^\times} \chi(x)^{-1} \phi_v\left(\begin{pmatrix}1&\\ \pi_v^lx&1\end{pmatrix} k\right) dx 
\end{align*}
For $k=\begin{pmatrix}a&b\\c&d\end{pmatrix}$, $c\in\so_v^\times$ the right-hand side equals 0. The summands are also zero if $c\in\pi_v^m\so_v^\times$ for some $m=1,\ldots,m_v-1$: indeed, the sum restricts to the summand $l=m$ and we get
\begin{align*}
I_v(k) &= q_v^{l-2s}\chi(\pi_v)^{-m} \int_{\so_v^\times} \chi(x)^{-1} \phi_v\left(\begin{pmatrix}a & b\\ \pi_v^max + c &\pi_v^mbx + d \end{pmatrix} \right) dx \\
       &= q_v^{l-2s}\chi(\pi_v)^{-m} \chi(a) \int_{(-\pi_v^{-m}ca^{-1})(1+\pi_v^{m_v-m}\so_v)} \chi(x) dx \\
       &= q_v^{l-2s}\chi(\pi_v)^{-2m} \chi(c) \int_{1+\pi_v^{m_v-m}\so_v} \chi(x) dx
\end{align*}
and since $\chi$ is nontrivial on $1+\pi_v^{m_v-m}\so_v$ the integral on the 
right vanishes. 

\subsubsection{Unramified places}
\label{L}
If $\chi$ is not ramified at $v$ then we choose $\phi_v$ to be the function in $\sh_s$ which is identically equal to 1 on $K_v$. We recall the following well-knwown computation:
\begin{align*}
\Psi_v(s)\phi_v(k) &= \int_{F_v-\mathcal{O}_v}\phi_s\left(w\begin{pmatrix}1&x\\ &1\end{pmatrix}k\right)dx + 1 \\
                   &= \int_{F_v-\mathcal{O}_v} |x|_v^{-2s}\chi(x)^{-1}\phi\left(\begin{pmatrix}1& \\x^{-1}&1\end{pmatrix} k \right)dx + 1\\
                   &= 1 + \sum_{l\ge 1}(1-q_v^{-1})q_v^l\chi(\pi_v)^k q_v^{-2sl} = \frac{1-\chi(\pi_v)q_v^{-2s}}{1-\chi(\pi_v)q_v^{-2s+1}}.
\end{align*}

\subsubsection{Infinite place}
\label{Iinf}
We finally compute
$$
I_\infty(k) = \Psi_\infty(s)\phi_\infty(k).
$$ 
For $z\in\CC$ we have 
\begin{equation*}
w\begin{pmatrix}1&z\\ &1\end{pmatrix} = \begin{pmatrix}u^{-1} & u^{-1}\bar{z}\\ & u\end{pmatrix} k_z, \: u=\sqrt{|z|_\infty +1},\: k_z=\begin{pmatrix}-u^{-1}\bar{z} & u^{-1} \\ u^{-1} & u^{-1}z \end{pmatrix}
\end{equation*}
and it follows that 
\begin{equation*}
I_\infty(k) = \int_\CC(|z|_\infty+1)^{-2s}\phi_\infty(k_zk) dzd\bar{z}
\end{equation*}
As $|z|\to+\infty$ we have $\left|k_z-\begin{pmatrix}\bar{z}/|z| &\\ &z/|z|\end{pmatrix}\right|\ll |z|_\infty^{\frac{-1}2}$ (for any norm $|.|$ on $K_\infty$). For $\Real(s)>1/2$ we get:
\begin{equation}
I_\infty = \phi_\infty(k) \int_\CC \chi_\infty(z)(|z|_\infty+1)^{-2s}dzd\bar{z} + O(1)
\label{infini}
\end{equation}
where the $O(1)$ depends on $\tau_\infty$ but not on $s$. If $\chi_\infty\not=1$ then the integral is zero so that $I_\infty$ is bounded independantly of $s$ for $\Real(s)\ge 1/2$. If $\chi_\infty=1$ we have 
\begin{equation*}
\int_{\mathbb{C}}(|z|_\infty+1)^{-2s}dzd\bar{z} = \pi\frac{\Gamma(2s-1)}{\Gamma(2s)}
\end{equation*}
and it follows that $I_\infty$ has a meromorphic continuation to $\Real s>0$ such that $I_\infty(k)-\pi\frac{\Gamma(2s-1)}{\Gamma(2s)}\phi_\infty(k)$ is bounded independantly of $s$ for $\Real(s)\ge 1/2$.

\subsubsection{Final expression} 
For $\Real(s)\ge 1/2$ and our specific $\phi$ we get the formula
\begin{equation} \label{calcul!}
\Psi(s)\phi(k) = I_\infty \times \frac{L(\chi,2s-1)}{L(\chi,2s)} \prod_{v\in S_\chi} J_v(k)
\end{equation}


\subsection{Proof of Proposition \ref{estenta}}

We will write $s=\sigma+iu$ for this whole subsection and suppose (unless otherwise stated) that $\sigma=1/2$. Taking the derivative of the product \eqref{calcul!} yields
$$
\frac d{du} \Psi(s)\phi(k) = \left(\frac{\frac d{du}L(\chi,2s)}{L(\chi,2s)} + \frac{\frac d{du}\left(L(\chi,2s-1)I_\infty(k)\right)}{L(\chi,2s-1)I_\infty(k)} \right) \Psi(s)\phi(k)
$$
so that we get, using the functional equation for $L(\chi,.)$:
\begin{align*}    
\left| \frac d{du} \Psi(s)\phi \right|_{L^2(K)} &\le 2\left|\frac{\frac d{du}L(\chi,2s)}{L(\chi,2s)}\right| + \frac{\left|\frac d{du}(\gamma(s)I_\infty)\right|_{L^2(K_\infty)}}{|\gamma(s) I_\infty|_{L^2(K_\infty)}} \\
   &=: L_1 + L_2.
\end{align*}
We will suppose at first that $\chi$ is non-trivial so that the $L$-function $L(\chi,.)$ is holomorphic on $\Real(s)>0$. To bound both $L_1$ and $L_2$ we use the following well-known lemma.  

\begin{lemma}
Suppose that $\chi$ is non-trivial ; then we have 
\begin{equation}
\left|\frac{\frac d{du}L(\chi,2s)}{L(\chi,2s)}\right| \ll (\log|\frf_\chi|)^2
\label{dvlogL}
\end{equation}
with a constant depending only on $s$ and $F$, growing polynomially in $\Ima(s)$.
\end{lemma}

\begin{proof}
The Euler product for $L(\chi,2s)$ yields for $\Real(s)>1/2$ the absolutely converging series expansion 
\begin{equation*}
\frac{\frac d{du}L(\chi,2s)}{L(\chi,2s)} = \sum_{v\not\in S_\chi} \frac{2i(\log q_v) q_v^{-2s} \chi(\pi_v)}{1-\chi(\pi_v)q_v^{-2s}}.
\end{equation*}
We get 
\begin{equation*}
\frac{\frac d{du}L(\chi,2s)}{L(\chi,2s)} = \sum_{v\not\in S_\chi} 2i(\log q_v)q_v^{-2s}\chi(\pi_v) \sum_{k\ge 0}\chi(\pi_v)^kq_v^{-2ks}.
\end{equation*}
The series $\sum_{v\not\in S_\chi} (\log q_v)q_v^{-2s}\chi(\pi_v) \sum_{k\ge 1}\chi(\pi_v)^kq_v^{-2ks}$ converges absolutely for $\Real(s)>1/4$ and its sum is bounded by a constant depending only on $F$, so that we are left with estimating $\sum_{v\not\in S_\chi} (\log q_v)q_v^{-2s}\chi(\pi_v)$. 

Let $\chi_1,\ldots,\chi_{h_F}$ be all the Hecke characters on $F^\times\bs \Ade^1$ such that $\ker(\chi_j)\supset M$. If $\chi$ is any Hecke character there exists some $j$ such that $\chi(\pi_v)=\chi_j(\pi_v)$ for all places $v\not\in S_\chi$. We then have that  
$$
\sum_{v\not\in S_\chi} q_v^{-2s}\log(q_v)\chi(\pi_v) = \sum_{v\not\in S_{\chi_j}} q_v^{-2s}\log(q_v)\chi_j(\pi_v) - \sum_{v\in S_\chi-S_{\chi_j}} q_v^{-2s}\log(q_v)\chi_j(\pi_v). 
$$
As $\chi_j$ is non-trivial the function $H : s\mapsto \sum_{v\not\in S_{\chi_j}} q_v^{-2s}\log(q_v)\chi_j(\pi_v)$ has an holomorphic extension to an open subset of $\CC$ containing the half-plane $\Real(s)\ge 1/2$, and by standard arguments\footnote{It suffices to prove that there is a polynomial bound for the logarithmic derivative of a given Hecke $L$-function} there is a polynomial bound (depending only on $F$ as $\chi_1,\ldots,\chi_{h_F}$ are fixed) in $u$ for $H(1/2+iu)$. On the other hand, putting $q=\max_{v\in S_\chi} q_v$ we get that for $\Real(s)=1/2$ we have 
$$
\left|\sum_{v\in S-S_\chi} (\log q_v)q_v^{-2s}\chi(\pi_v)\right| \le \log q \sum_{v\in S-S_\chi} \frac 1{q_v} \ll (\log q)^2
$$
and as $q\le|\frf_\chi|$ we are left with  
$$
\left|\frac{\frac d{du}L(\chi,2s)}{L(\chi,2s)}\right| \le C(u)(\log|\frf_\chi|)^2
$$
where $C(u)$ has polynomial growth in $u$, which finishes the proof of the lemma. 
\end{proof}

Since $\frac d{du}I_\infty\gamma(s)$ is also bounded by 
$\log|\frf_\chi|$ we thus get 
\begin{equation}
|L_1|, |L_2| \le \left|\frac{\frac d{du}L(\chi,2s)}{L(\chi,2s)}\right| \le C(u) (\log|\frf_\chi|)^2
\label{L12}
\end{equation}
where $C(u)$ is growing polynomially. 

It remains to deal with the case where $\chi=1$, which is the same except that we have to group the terms $I_\infty$ and $L(\chi,2s-1)=\zeta_F(2s-1)$ to cancel their poles at $s=1/2$. The details will be left to the reader (see also \cite[5.5.3]{thesis}).


\subsection{Estimates off the critical line}

The estimates above are still valid for any real number $s$ (except if $\chi$ is trivial and $s=1$) but different exponents for $|\fri|$ and $\frf_\chi$ are obtained. We will not require tight estimates for those, and be content with stating the following rough result. 

\begin{prop}
For any $\sigma\in\RR$, $\chi\not=1$ and $\phi\in\sh(\chi,\tau)$ we have
$$
\|\Psi(\sigma)\phi\|_\sh,\left\|\frac{d\Psi(\sigma+iu)}{du}\phi \right\|_\sh \le C|\fri|^c \|\phi\|_\sh
$$
where $C$ depends on $F,\sigma$ and $\tau_\infty$ and $c$ only on $\sigma$. 
\label{estentreal}
\end{prop}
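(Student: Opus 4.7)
The plan is to rerun the computation from the proof of Proposition \ref{estenta} with cruder estimates, using that we only need polynomial (not $\eps$-close to trivial) control. As there, I would reduce to the case of a pure tensor $\phi = \otimes_v \phi_v$, where $\phi_v$ is the spherical vector at unramified places and the $B_v(\chi_v)$-equivariant vector of \ref{ramify} at ramified finite places. Since $\Psi(s)$ intertwines the irreducibles $\sh_s(\chi)$ and $\sh_s(\chi^{-1})$, and its block on each $K_f'$-invariant isotype is scalar (respectively given by the global formula \eqref{calcul!}), the problem reduces to bounding that scalar (and its $u$-derivative) by $C|\fri|^c$.

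Starting from the explicit formula
\[
\Psi(s)\phi(k) = I_\infty(k)\cdot \frac{L(\chi,2s-1)}{L(\chi,2s)}\cdot \prod_{v\in S_\chi} J_v(k),
\]
I would bound each factor on the vertical line $\Real(s)=\sigma$ with $s=\sigma+iu$. The ramified local factors $J_v$ were computed in \ref{ramify} for $\sigma=1/2$; exactly the same matrix expansion gives, at general $\sigma$, an expression of the form $q_v^{-2\sigma m_v + O(1)}$ times a bounded character sum, and the resulting product over $v\in S_\chi$ is controlled polynomially by $|\frf_\chi|^{c_1(\sigma)} \le |\fri|^{c_1(\sigma)}$. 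The infinite factor $I_\infty$ as computed in \ref{Iinf} depends on $\sigma$ and $\tau_\infty$ but has at most polynomial growth in $u$ (and is trivially independent of $\fri$). For the L-factor ratio, since $\chi\ne 1$ the Hecke L-function $L(\chi,\cdot)$ is entire, so on $\Real(s)=\sigma$ the ratio has no poles; convexity bounds for Hecke L-functions (Phragm\'en--Lindel\"of applied between $\Real s \gg 1$, where $L$ is bounded, and $\Real s \ll 0$, where we use the functional equation) give $|L(\chi,2\sigma-1)|\,|L(\chi,2\sigma)|^{-1} \ll_{u} |\frf_\chi|^{c_2(\sigma)}$.

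For the derivative, I would apply the product rule to \eqref{calcul!}. The factors $J_v(k)$ and $I_\infty(k)$ are explicit and their $u$-derivatives are estimated exactly like the factors themselves, each contributing at most one extra $\log|\frf_\chi|$ which is absorbed into $|\fri|^c$. The one term needing care is the logarithmic derivative $L'(\chi,2s)/L(\chi,2s)$: away from the critical line one obtains a polynomial bound in $|\frf_\chi|$ either by a direct Dirichlet-series argument as in the lemma used in the proof of Proposition \ref{estenta} (now without the need to isolate the logarithmic power) or by Cauchy's integral formula from the L-function bounds just described. The same goes for the logarithmic derivative of $L(\chi,2s-1)I_\infty$, the poles of the two at $s=1/2$ (if $\chi_\infty = 1$) cancelling as in the final paragraph of the proof of Proposition \ref{estenta}.

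The only possible obstacle is obtaining the polynomial bounds for $L$ and $L'/L$ off the critical line, but these are completely standard for Hecke L-functions since we permit the exponent $c$ to depend on $\sigma$; there is no need for subconvexity or any fine analysis. The case $\chi=1$ is excluded by hypothesis, so no cancellation of poles of $\zeta_F$ has to be handled.
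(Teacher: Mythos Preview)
Your approach is exactly what the paper intends: it gives no separate proof but simply remarks, just before stating the proposition, that ``the estimates above are still valid for any real number $s$\ldots but different exponents for $|\fri|$ and $\frf_\chi$ are obtained,'' i.e.\ one reruns the computations leading to \eqref{calcul!} with cruder bounds.

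One point needs attention. You write that since $\chi\ne1$ the $L$-function is entire and hence the ratio $L(\chi,2s-1)/L(\chi,2s)$ ``has no poles,'' and that convexity then bounds it polynomially in $|\frf_\chi|$. But entirety of $L(\chi,\cdot)$ does not preclude \emph{zeros}, which become poles of $1/L(\chi,2s)$; and Phragm\'en--Lindel\"of only yields \emph{upper} bounds on $|L|$, not the lower bound on $|L(\chi,2\sigma)|$ you need. For $\sigma>1/2$ the absolutely convergent Euler product gives $|L(\chi,2\sigma)|^{-1}\le\prod_v(1+q_v^{-2\sigma})$, a constant depending only on $\sigma$ and $F$; for $\sigma<0$ the functional equation reduces to this case at the cost of a power of $|\frf_\chi|$. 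For $0<\sigma<1/2$, however, $2\sigma$ lies in the open critical strip, where $L(\chi,2\sigma)$ may vanish and polynomial lower bounds in the conductor are genuinely nontrivial. The proposition as literally stated for all $\sigma\in\RR$ is therefore delicate in this range; fortunately the paper only invokes it (in Lemmas \ref{commode} and \ref{reg1}) at the integer value $s_V^1$ with $|s_V^1|\ge1$, where your argument goes through as written.
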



\section{Reidemeister torsion and asymptotic Cheeger-M\"uller equality} \label{CMA}

The aim of this section is to define a Reidemeister torsion $\tau$ for congruence manifolds with cusps and then prove the following result. We fix a Bianchi group $\Gamma$ and a strongly acyclic $\Gamma$-module $V_\ZZ$. 

\begin{theo}
Let $\Gamma_n$ be a cusp-uniform sequence of pairwise distinct torsion-free congruence subgroups of $\Gamma$. Let $M_n=\Gamma_n\backslash\HH^3$ and let $\tau(M_n;V_\ZZ)$ be defined by \eqref{defRtors} below, then we have
$$
\lim_{n\to\infty}\frac{\log\tau(M_n;V_\ZZ)-\log T_R(M_n;V)}{\vol M_n} = 0. 
$$
\label{CMAfin}
\end{theo}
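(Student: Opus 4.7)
The plan is to bridge $\tau(M_n;V_\ZZ)$ and $T_R(M_n;V)$ through the intermediate pair $(\tau_\abs(M_n^{Y^n};V), T_\abs(M_n^{Y^n};V))$ on the truncated manifold with absolute boundary conditions, where the classical Cheeger--M\"uller correspondence is available. First, for each $n$ I would invoke the Cheeger--M\"uller theorem for compact manifolds with boundary (Br\"uning--Ma in the general case, L\"uck for product metrics near $\partial$) applied to $M_n^{Y^n}$ with absolute conditions. Since each cusp cross-section $\partial M_n^{Y^n}$ is a disjoint union of flat 2-tori, and the metric near the boundary of $M_n^{Y^n}$ is a genuine product after a suitable smoothing that can be performed uniformly in $n$, the boundary/anomaly corrections (which are proportional to $\chi(\partial)=0$ or integrals of curvature invariants that vanish on flat tori) contribute nothing, yielding
$$
\log T_\abs(M_n^{Y^n};V) \; = \; \log \tau_\abs(M_n^{Y^n};V).
$$

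Next, I would carry out the comparison on the analytic side, showing that
$$
\log T_\abs(M_n^{Y^n};V) - \log T_R(M_n;V) = o(\vol M_n).
$$
The two quantities differ only by spectral contributions supported on the cusp cylinders $[Y^n,\infty)\times\partial M_n^{Y^n}$ and by the continuous-spectrum piece of $\otr_R$ in \eqref{spec}. Using the spectral presentation of $T_R$ from \cite[Section 3]{volI} together with the Maass--Selberg expansions \eqref{preMaass-Selberg}--\eqref{Maass-Selberg-real}, this difference is expressed as an integral of the logarithmic derivatives $\tr(\Psi_l^{-1}d\Psi_l/du)$ and $\tr(\Phi_l^{-1}d\Phi_l/du)$, together with a log-$Y^n$ boundary term. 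The key inputs are Proposition \ref{estenta} (polynomial-in-$u$ bounds with a sub-$\eps$ power of the level) combined with Lemma \ref{nbcusps} ($h_n\le (\vol M_n)^{1-\delta}$) and cusp-uniformity (which keeps $Y^n$ bounded), which together produce the required $o(\vol M_n)$ bound after choosing $\eps$ sufficiently small; strong acyclicity ensures the heat-kernel small-time contributions are harmless.

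On the topological side I would relate $\tau(M_n;V_\ZZ)$ to $\tau_\abs(M_n^{Y^n};V)$ by unpacking the formula
$$
\tau_\abs(M_n^{Y^n};V) = \frac{R^2(M_n^{Y^n})}{R^1(M_n^{Y^n})}\,\frac{|H^2(M_n;V_\ZZ)|}{|H^1(M_n;V_\ZZ)|}
$$
recalled in the introduction. Here the covolumes $R^p(M_n^{Y^n})$ measure the lattice $H^p(M_n;V_\ZZ)_\free$ inside the space of harmonic forms with absolute boundary values; this space is identified via Hodge theory with a sum of cuspidal harmonics and an Eisenstein contribution (coming from the description of $H^*(M_n;V_\CC)$ by non-cuspidal automorphic forms). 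The ratios defining $\tau$ therefore differ from the truncated ratios only by determinant factors coming from Eisenstein classes, which are controlled by the same intertwining operator estimates used on the analytic side together with standard manipulations of the long exact sequence for $(\ovl M_n,\partial\ovl M_n)$ and Poincar\'e duality (Proposition \ref{pd}).

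The main obstacle I expect is the second step: obtaining uniform $o(\vol M_n)$ control on the analytic difference $\log T_\abs(M_n^{Y^n};V) - \log T_R(M_n;V)$, because it is genuinely a spectral statement requiring both a careful integration against the continuous spectrum (via Maass--Selberg) and a uniform-in-$n$ control on the intertwining operators at all heights, not merely on the critical line. The polynomial-in-$u$ bound in Proposition \ref{estenta} is of critical-line type; extending it to handle the heat-kernel regularization requires combining Proposition \ref{estentreal} for off-line estimates with a contour shift, and verifying that the error integrals are polynomial-in-level and thus absorbed by the $(\vol M_n)^{-\delta}$ coming from Lemma \ref{nbcusps}.
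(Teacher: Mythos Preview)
Your overall three-step strategy (Cheeger--M\"uller on the truncated manifold, then analytic comparison to $T_R$, then Reidemeister comparison to $\tau$) matches the paper's architecture, but there is one factual error and one genuine structural gap.

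The factual error: cusp-uniformity does \emph{not} keep $Y^n$ bounded. The truncation parameter from \cite{volI} is
\[
Y_j^n = \alpha_1(\Lambda_{n,j})\left(\frac{\vol M_n}{\sum_j (\alpha_2/\alpha_1)^2}\right)^{1/10},
\]
which grows like a positive power of $\vol M_n$. Cusp-uniformity only controls the ratios $\alpha_2/\alpha_1$.

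The structural gap: because $Y^n$ is unbounded, the Reidemeister comparison in your step~3 cannot simply be run at height $Y^n$. The paper's Proposition~\ref{compcomb} carries out that comparison at a \emph{different} truncation height $\Upsilon^n \asymp |\fri_n|^A$ with $A$ large, chosen so that the Maass--Selberg estimates of Lemma~\ref{commode} and the exponential decay of Lemma~\ref{cvagain} both bite. One then needs a separate result, Proposition~\ref{compabs}, to show that $\log\tau_\abs(M_n^{Y^n}) - \log\tau_\abs(M_n^{\Upsilon^n}) = o(\vol M_n)$; this is done via the Ray--Singer variation formula for a one-parameter family of metrics interpolating between the two truncations, together with a pointwise bound on the $dy$-component of harmonic forms coming from decay of $f-f_P$. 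Your proposal does not account for this height change at all.

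Finally, your emphasis is inverted relative to the paper. Your steps~1 and~2 together are exactly Theorem~B of \cite{volI}, which the paper invokes as a black box (after checking its hypotheses via Theorem~\ref{BSbianchi} and Proposition~\ref{estenta}). The genuinely new work in this section is your step~3 plus the bridge just described, and your treatment of step~3 (``determinant factors coming from Eisenstein classes'') is too vague: the actual argument requires projecting Eisenstein forms onto absolute-harmonic forms with controlled error (Lemma~\ref{cvagain}), bounding truncated $L^2$-norms via Maass--Selberg and the off-line intertwining estimates of Proposition~\ref{estentreal} (Lemma~\ref{commode}), and then using $h_n \ll (\vol M_n)^{1-\delta}$ to absorb the polynomial-in-level factors.
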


We now explain how this result follows from \cite{volI} and the results in \ref{cp} and \ref{ca} below. According to Theorem \ref{BSbianchi} and the proof of Theorem \ref{analytic} we can apply Theorem B in \cite{volI} to the sequence $M_n$, so that we get 
$$
\lim_{n\to\infty}\frac{\log\tau_\abs(M_n^{Y^n};V)-\log T_R(M_n;V)}{\vol M_n} = 0
$$
for the sequence $Y^n$ described there. According to Proposition \ref{compabs} below we can replace $Y^n$ by any $\Upsilon^n$ such that $\max\Upsilon_j^n\le|\fri_n|^c$ for some constant $c$, and the result now follows from Proposition \ref{compcomb} below. 

Before giving the definition of $\tau(M;V_\ZZ)$ and the proof of Proposition \ref{compcomb} we will recall from scratch how to describe analytically the cohomology of the boundary $\pl\ovl M$ of the Borel-Serre compactification $\ovl M$ of an hyperbolic manifold with cusps, and how to construct a section of the pull-back map $H^*(M)\to H^*(\pl\ovl M)$ using Eisenstein series as in \cite{Harder} (see also \cite[Section 3]{Berger}).


\subsection{Boundary cohomology} 
\label{cohbd}
We fix a congruence\footnote{Everything in the next three sections applies to all finite-volume hyperbolic three--manifolds.} manifold $M=M_{K'}$; the set of its cusps is in bijection with $\mathcal{C}(K')=C(F)\times(K_f'\bs K_f/N_f)$. Recall from \cite{volI} that $W_{l,k}$ is the subspace of $V_\CC$ of weight $(l,k)$ for the (complex) representation of $\LSL_2(\CC)\otimes\CC=\LSL_2(\CC)\oplus\LSL_2(\CC)$. In degree 1 we have an isomorphism
\begin{equation}
H^1(\pl M;V_\CC) \cong \CC[\mathcal{C}(K')]\otimes (W_{-n_1,n_2}\oplus W_{n_1,-n_2}) 
\label{deg1bd} 
\end{equation}
defined as follows: to a $2h$-tuple of vectors  $v_1,\ldots,v_h\in W_{-n_1,n_2},\bar v_1,\ldots,\bar v_h\in V_{n_1,-n_2}$ we associate the de Rham cohomology class $[\omega]$ of the 1-form $\omega$ given by
$$
\omega = \sum_{j=1}^h d\bar z_j\otimes (g_j \rho(n_{z_j})v_j) + dz_j\otimes (g_j \rho(n_{z_j})\bar v_j), \quad n_z = \begin{pmatrix} 1&z\\ &1\end{pmatrix}. 
$$
Let us check that $\omega$ is indeed a closed form. We have $v_j=w_j\otimes u_j$ where $w_j = g_j\lambda_j e_0$ and $u_j = g_j\ovl e_{n_2}$, so that $\rho(g_jn_z g_j^{-1}).v_j=w_j\otimes(\sum_{l=0}^{n_2}Q_l(\bar z)g_j\ovl e_{n_1-l}$ where $Q_l$ is a polynomial depending only on $n_2$. It follows that $z\mapsto\rho(n_z)v_j$ is anti-holomorphic. We can see in the same way that $z\mapsto \rho(n_z)\bar v_j$ is holomorphic and all this yields that 
$$
d(d\bar z_j\otimes(g_j\rho(n_{z_j})v_j)) = 0 = d(dz_j\otimes(g_j\rho(n_{z_j})\bar v_j)). 
$$
We will denote by $H^{1,0}(\pl\ovl M;V_\CC),H^{0,1}(\pl \ovl M;V_\CC)$ the subspaces of $H^1$ corresponding respectively to $\CC[\mathcal{C}(K')]\otimes W_{\mp n_1,\pm n_2}$. 

As for degrees 0 and 2 we have isomorphisms
\begin{equation}
H^0(\pl\ovl M;V_\CC) \cong \CC[\mathcal{C}(K')]\otimes W_{n_1,n_2} \cong H^2(\pl\ovl M;V_\CC). 
\end{equation}
Indeed, the space $W_{n_1,n_2}$ is the space of fixed vectors of $_0 N$ in $V$, and to $v_1,\ldots,v_h\in W_{n_1-n_2}$ we associate the holomorphic section $\sum_{j=1}^h g_j v_j$ or the holomorphic 2-form $\sum_{j=1}^h (dz_j\wedge d\bar z_j)\otimes(g_jv_j)$).


\subsection{Eisenstein cohomology}

The $L^2$-cohomology of $M$ with coefficients in $V_\CC$ vanishes and the map $i_p^* : H^p(M;V_\CC)\to H^p(\pl\ovl M;V_\CC)$ is thus an embedding for $p=1,2$ (cf. \cite[Theorem 2.1]{MFP2}). One can show using the long exact sequence of the pair $\ovl M, \pl\ovl M$ and Kronecker duality that 
\begin{equation}
\dim H^1(M;V_\CC)=1/2\dim H^1(\pl\ovl M;V_\CC) 
\label{eis=.5bd}
\end{equation}
(cf. \cite[Lemme 11]{Serrecong}). As  $H^0(M;V_\CC)=0$ the long exact sequence also yields that 
$$
\dim H^2(M;V_\CC)=\dim H^2(\pl\ovl M;V_\CC)-\dim H^0(M;V_\CC) = \dim H^2(\pl\ovl M;V_\CC).
$$
Now we will give an explicit description of the maps $i_p^*$ following \cite{Harder}. For a closed $p$-form $f\in\Omega^p(M;V_\CC)$ we denote by $[f]$ its de Rham cohomology class. Given an harmonic form $\omega\in H^1(\pl\ovl M;V_\CC)$ and a $s\in\CC$ we can form the Eisenstein series $E(s,\omega)\in\Omega^1(M;V_\CC)$. The following result is well-known, see for instance the proof of \cite[Theorem 2]{Harder}. 

\begin{lemma}
Let $s_V^1=n_2-n_1$ and $\omega\in H^{1,0}(\pl\ovl M;V_\CC)$ (resp. $\ovl\omega \in H^{0,1}(\pl\ovl M;V_\CC)$). The Eisenstein series $E(s_V^1,\omega)$ (resp. $E(-s_V^1,\ovl\omega)$) is then a closed 1-form. Moreover the classes $[E(s_V^1,\omega)]$ span $H^1(M;V_\CC)$. 
\label{deg1eis}
\end{lemma}

\begin{proof}
We need only check that if $P$ is a $\Gamma$-rational parabolic subgroup the constant term of $E(s_V^1,\omega)$ at $P$ is a closed form on $\Gamma_P\bs\HH^3$. It is equal (in the $\SL_2(\CC)$-equivariant model for $E_\rho$, see \cite[(2.5)]{volI}) to $\omega+\Phi^+(s_V^1)\omega$ and as $\omega,\Phi^+(s_V^1)\omega$ are  closed forms on $\pl\ovl M$ we have $d(\omega+\Phi^+(s_V^1)\omega)=0$. Moreover, we have $E(-s_V^1,\ovl\omega)=E(s_V^1,\Phi^-(-s_V^1)\ovl\omega)$ and thus the second statement follows. 

The constant term of $E(s_V^1,\omega)$ is also not an exact form since its restriction to $\pl\ovl M$ is not, and it follows that the map $H^{1,0}(\pl\ovl M;V_\CC)\ni\omega\mapsto[E(s_V^1),\omega]$ is injective. As we have the equality of dimensions 
$$
\dim H^{1,0}(\pl\ovl M;V_\CC)=1/2\dim H^1(\pl\ovl M;V_\CC)=\dim H^1(M;V_\CC)
$$
it is in fact an isomorphism.  
\end{proof}

We can thus define an application $E^1: H^1(\pl\ovl M;V_\CC)\to H^1(M;V_\CC)$ by $E^1(\omega+\ovl\omega)=[E(s_V^1,\omega)+E(-s_V^1,\ovl\omega)]$ for $\omega\in H^{1,0}(\pl\ovl M;V_\CC),\ovl\omega \in H^{0,1}(\pl\ovl M;V_\CC)$. From the formula for the constant term of Eisenstein series we get 
$$
i_1^*E^1(\omega)=\omega+\Phi^+(s_V^1)\omega,\quad \omega\in H^{1,0}(\pl\ovl M;V_\CC)
$$
and it follows that 
$$
\im i_1^* = \{\omega+\Phi^+(s_V^1)\omega,\, \omega\in H^{1,0}(\pl\ovl M;V_\CC)\} = \{ \Phi^-(-s_V^1)\ovl\omega+\ovl\omega,\quad \ovl\omega\in H^{0,1}(\pl\ovl M;V_\CC) \}. 
$$

In degree 2 the long exact sequence shows that $i_2^*$ is onto (since $H^3(\ovl M,\pl\ovl M;V_\CC)\cong H^0(M;V_\CC)=0$). We have a result akin to Lemma \ref{deg1eis} for this case, whose proof is very similar. 

\begin{lemma}
Let $s_V^0=n_1+n_2+1$ and $v\in V_N:= \bigoplus_{j=1}^h V_\CC^{N_j}\cong H^0(\pl\ovl M; V_\CC)$. The 2-form $*dE(s_V^0,v)$ is closed, and the classes $[*dE(s_V^0,v)]$ for $v\in V_N$ span $H^2(M;V_\CC)$. 
\label{deg2eis}
\end{lemma}

\begin{proof}
Computing the Casimir eigenvalue (cf. \cite[(2.4)]{volI}) one sees that $E(s_V^0,v)$ is harmonic, so that $d*dE(s_V^0,v)=0$. The constant term of $*dE(s_V^0,v)$ is a nonzero harmonic 2-form so that $[*dE(s_V^0,v)]$ is nonzero, and by equality of dimensions we get that these classes span $H^2(M;V_\CC)$. 
\end{proof}

We denote by $E^2$ the map $H^0(\pl\ovl M;V_\CC)\to H^2(M;V_\CC)$ defined by $v\mapsto [*dE(s_V^0,v)]$. 


\subsection{Inner products on cohomology and Reidemeister torsion}

From now on we will suppose that $V=V_{n_1,n_2}$ with $n_1>n_2$, so that $s_v^1\ge 1$. It follows from the Maass-Selberg relations \eqref{Maass-Selberg-real} that for $\omega\in H^{1,0}(\pl\ovl M; V_\CC)$ we have the limit 
$$
\lim_{Y\to\infty} Y^{-2s_V^1+1}\|T^YE(s_V^1,\omega)\|_{L^2(M)}^2 = (s_V^1)^{-1} \|\omega\|_{L^2(\pl\ovl M)}^2
$$
and we define an inner product on $H_\eis^1(M;V_\CC)$ by 
\begin{equation}
\begin{split}
\langle i_1^*[E^1(\omega)],i_1^*[E^1(\omega')]\rangle_{H_\eis^1(M)} &= \langle\omega,\omega'\rangle_{L^2\Omega^1(\pl\ovl M)}^2 \\
          &= \lim_{Y\to\infty} s_V^1\cdot Y^{-2s_V^1}\langle T^YE(s_V^1,\omega),T^YE(s_V^1,\omega')\rangle_{L^2\Omega^1(M)}^2. 
\end{split}
\label{deg1}
\end{equation}
Similarly, we can put 
\begin{equation}
\begin{split}
\langle i_2^*[E^2(v)],i_2^*[E^2(v')]\rangle_{H_\eis^2(M)} &= \langle v,v'\rangle_{L^2(\pl\ovl M)} \\
          &= \lim_{Y\to\infty} (s_V^0)^{\frac 1 2} Y^{-2s_V^0}\langle T^Y(*dE(s_V^0,v)),T^Y(*dE(s_V^0,v'))\rangle_{L^2(M)}. 
\end{split}
\label{deg2}
\end{equation}

Now for $p=1,2$ the integral cohomology $H^p(M;V_\ZZ)_\free$ is a lattice in the hermitian vector space $H^p(M;V_\CC)$. We finally define the Reidemeister torsion of $M$ with coefficients in $V$ by the formula
\begin{equation}
\tau(M;V_\ZZ) = \frac{|H^1(M;V_\ZZ)_\tors|}{\vol H^1(M;V_\ZZ)_\free} \times \frac{\vol H^2(M;V_\ZZ)_\free}{|H^2(M;V_\ZZ)_\tors|}.
\label{defRtors}
\end{equation}


\subsection{Asymptotic equality of Reidemeister torsions}
\label{cp}

We prove now that the Reidemeister torsion we just defined is asymptotically equal to the absolute Reidemeister torsion of the truncated manifolds (for a certain choice of truncations). 

\begin{prop}
Let $\Gamma_n,V$ be as in the statement of Theorem \ref{CMAfin}. There exists a sequence $\Upsilon^n$ such that 
\begin{equation}
\frac{\log\tau_\abs(M_n^{\Upsilon^n};V_\ZZ)-\log\tau(M_n;V_\ZZ)}{\vol M_n} \xrightarrow[n\to\infty]{} 0.
\end{equation}
and $\max_j \Upsilon_j^n \le |\fri|^c$ for some $c>0$. 
\label{compcomb}
\end{prop}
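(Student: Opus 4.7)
The plan is to compare $\tau_\abs(M_n^{\Upsilon^n};V_\ZZ)$ and $\tau(M_n;V_\ZZ)$ factor by factor. Both invariants are built from the integer cohomology $H^p(M_n;V_\ZZ)$ for $p=1,2$: a ratio of two covolumes of the same lattice $H^p(M;V_\ZZ)_\free$ in different inner product spaces, together with a product of the torsion orders $|H^p(M;V_\ZZ)_\tors|$. The inner product defining $R^p(M^\Upsilon)$ is the $L^2$-pairing on harmonic $p$-forms on $M^\Upsilon$ satisfying absolute boundary conditions, while the one defining $\vol H^p(M;V_\ZZ)_\free$ is the Eisenstein pairing of \eqref{deg1}--\eqref{deg2}, i.e.\ the $L^2$-pairing at the boundary $\pl\ovl M$ after identification via \S\ref{cohbd}. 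The main task is to bound both the ratio of covolumes and any mismatch of the torsion factors.

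The first step is a Maass--Selberg computation. Under the identification of \S\ref{cohbd}, an integer class in $H^1(M;V_\ZZ)_\free$ is realised by a harmonic Eisenstein 1-form $E(s_V^1,\omega)$. The formula \eqref{Maass-Selberg-real} with $s=s_V^1$ gives, as $\Upsilon\to\infty$,
$$
\|T^\Upsilon E(s_V^1,\omega)\|_{L^2(M^\Upsilon)}^2 = \frac{\Upsilon^{2s_V^1}}{2s_V^1}\|\omega\|_{L^2(\pl\ovl M)}^2 + O(\|\omega\|^2),
$$
and the honest harmonic representative on $M^\Upsilon$ with absolute boundary conditions differs from $T^\Upsilon E(s_V^1,\omega)$ by a correction of $L^2$-norm bounded independently of $\Upsilon$. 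Taking Gram determinants over an integer basis of $H^1_\free$ and passing to logarithms yields
$$
\log R^1(M^\Upsilon) = s_V^1 d_1 \log\Upsilon + \log\vol H^1(M;V_\ZZ)_\free + O(d_1),
$$
where $d_1 = \rk H^1(M;V_\ZZ)_\free$, and analogously for $p=2$ at spectral parameter $s_V^0$ using the form $*dE(s_V^0,v)$ of Lemma~\ref{deg2eis}.

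Now set $\Upsilon_j^n = |\fri_n|^c$ for a fixed $c>0$, so that $\log\Upsilon^n \le c\log|\fri_n| = O(\log\vol M_n)$ by Lemma~\ref{index-level}. By \eqref{deg1bd} the dimensions $d_p$ are bounded by $\dim H^p(\pl\ovl M;V_\CC) = O(h_n)$, and by Lemma~\ref{nbcusps}, $h_n\ll(\vol M_n)^{1-\delta}$. Consequently $d_p\log\Upsilon^n = O(\log\vol M_n\cdot(\vol M_n)^{1-\delta}) = o(\vol M_n)$, which absorbs the entire regulator part of $\log\tau_\abs-\log\tau$ into the error.

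The main obstacle will be to control any residual mismatch in the torsion factors: depending on the convention for the integer Reidemeister torsion, the definitions of $\tau_\abs$ and $\tau$ may feature $|H^p(M;V_\ZZ)_\tors|$ in different positions. The plan is to invoke Poincar\'e--Lefschetz duality $H^p(M;V_\ZZ)_\tors \cong H^{4-p}(\ovl M,\pl\ovl M;V_\ZZ')_\tors$ (combining it with the universal coefficient theorem), together with the long exact sequence of the pair $(\ovl M,\pl\ovl M)$ and the fact that $[V_\ZZ':V_\ZZ]\le m^{\dim V}$ with $m = n_1!n_2!$ is absolute, to express any such discrepancy in terms of the torsion of $H^*(\pl\ovl M;V_\ZZ')$. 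The latter decomposes as a product over the $h_n$ cusps, each factor being computed from the Koszul complex of the $\ZZ^2$-action on $V$ and thus controlled polynomially by the cusp data $\alpha_1(\Lambda_{n,j}),\alpha_2(\Lambda_{n,j})$, hence by $|\fri_n|$; combined with Lemmas~\ref{nbcusps} and~\ref{index-level} this gives an $O(h_n\log|\fri_n|) = o(\vol M_n)$ contribution. The subtlest point is to make the integrality of Eisenstein cohomology classes explicit enough to match the integer bases coming from the Eisenstein parametrisation of \S\ref{cohbd} with those coming from the combinatorial cochain complex on $M^\Upsilon$.
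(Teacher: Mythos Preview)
Your overall strategy---reduce to a comparison of covolumes of the same lattice $H^p(M_n;V_\ZZ)_\free$ under two inner products, use Maass--Selberg to control the truncated $L^2$-norm, and absorb everything into $O(h_n\log|\fri_n|)=o(\vol M_n)$---is the same as the paper's. But two parts of your write-up are off.

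First, the entire discussion of a ``mismatch in torsion factors'' and of integrality of Eisenstein classes is unnecessary. The inclusion $M_n^{\Upsilon^n}\hookrightarrow M_n$ is a homotopy equivalence, so $H^*(M_n^{\Upsilon^n};V_\ZZ)\cong H^*(M_n;V_\ZZ)$ as graded abelian groups and the torsion orders in $\tau_\abs$ and in $\tau$ are literally the same. The paper's proof opens with exactly this observation: the ratio $\tau/\tau_\abs$ is a pure ratio of covolumes, and no Poincar\'e--Lefschetz duality, long exact sequences, or control of denominators of Eisenstein classes is needed here (the latter is used only later, in Lemma~\ref{reg1}).

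Second, and more seriously, you assert that ``the honest harmonic representative on $M^\Upsilon$ with absolute boundary conditions differs from $T^\Upsilon E(s_V^1,\omega)$ by a correction of $L^2$-norm bounded independently of $\Upsilon$''. This is the heart of the proof and is not automatic: the truncated Eisenstein form is neither closed nor co-closed on $M^\Upsilon$, and projecting it to $\ker\Delta_\abs^1[M^\Upsilon]$ requires a uniform spectral gap for $\Delta_\abs^1$ on the truncated manifolds. The paper isolates this as Lemma~\ref{cvagain}: one first replaces $f=E(s_V^1,\omega)$ by a form $f_Y'$ satisfying absolute conditions near the boundary (using a cutoff and the exponential decay of $f-f_P$), checks that $\Delta_\abs^1 f_Y'$ is exponentially small, and then invokes the spectral gap (from \cite[Proposition~6.8]{volI}, which relies on strong acyclicity of $V$) to control $\|f_Y'-f_Y\|$. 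The resulting bound is \emph{not} uniform in $n$: it involves $\vol(M^\Upsilon-M^{\Upsilon/2})$ and $e^{-c\Upsilon_j/\alpha_1(\Lambda_{n,j})}$, which is precisely why the paper must choose $\Upsilon_j^n$ large enough relative to $|\fri_n|$ (Lemma~\ref{commode} and the estimate \eqref{minorn}) to make the lower bound for $\|f_\Upsilon\|$ work. Your $O(d_1)$ error term hides this, and your $O(\|\omega\|^2)$ in the Maass--Selberg remainder is also not uniform in $n$---it is $O(|\fri_n|^b\|\omega\|^2)$ by Proposition~\ref{estentreal}. None of this destroys the argument (the final bound is still $O(h_n\log|\fri_n|)$), but you need to state and use the spectral-gap lemma explicitly and track the $n$-dependence of the constants.
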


The first step is the following result, whose proof is essentially contained in \cite[6.8.3]{CV}. 

\begin{lemma}
There are $C,c>0$ depending only on $F$ such that the following holds. Let $\Gamma'\subset\Gamma$ be a congruence subgroup, $M=\Gamma'\bs\HH^3$, $h$ its number of cusps, $\alpha_j=\alpha_1(\Lambda_{n,j})$ where $\Lambda_{n,j},j=1,\ldots,h$ are the euclidean lattices corresponding to the cusps of $M'$. Then for all $Y\in[1,+\infty)^h$ such that for all $j$, $Y_j\ge C\alpha_j$,  $\omega\in H^{1,0}(\pl\ovl M;V_\CC), \, f=E(s_V^1,\omega)$ and $f_Y$ the projection of $f|_{M^Y}$ on the subspace $H_\abs^1(M^Y;V_\CC)$. Then we have
$$
\|f-f_Y\|_{L^2(M^Y)} \le C\|f\|_{L^2(M^Y)}e^{-c\min_j(Y_j/2\alpha_j)} \vol(M^Y-M^{Y/2}). 
$$
\label{cvagain}
\end{lemma}

\begin{proof}
Let $h:[1, +\infty[ \to [0,1]$ be a smooth function such that $h(1)=1, h(2)=0$ and define $f_Y'$ on $M^Y$ by $f_Y'=f-h(Y/y)(f-f_P)$ (where $y=\max_j y_j$). It follows from (6.16) of 
\cite{volI} that 
\begin{equation}
\|f-f_Y'\|_{L^2(M^Y)}  \le \|f-f_P\|_{L^2(M^Y-M^{Y/2})} \ll \|f\|_{L^2(M^Y)} e^{-c\min_j(Y_j/2\alpha_j)} . 
\label{majojo}
\end{equation} 
Now we check that $f_Y'$ satisfies absolute boundary conditions: close enough to the boundary we have $f_Y'=f_P$, and since $dy\wedge *f_P=0$ and $df_P=0$ we conclude that $f_Y'\in\Omega_\abs^1(M^Y;V_\CC)$. Thus, we have 
\begin{equation}
\Delta_\abs^1[M^Y] f_Y' = \Delta^1[M^Y] f_Y' = -\Delta^1[M^Y](h(Y/y)(f-f_P)) = (f_P-f)\Delta^1[M^Y]h(Y/y)
\label{majojomojo}
\end{equation} 
and the $L^2$-norm of the right-hand side is bounded by $C\|f\|_{L^2(M^Y)} e^{-c\min_j(Y_j/2\alpha_j)}$. 

According to the proof of Proposition 8.2 in \cite{volI}, up to making $C$ larger we may suppose that for $Y_j\ge C\alpha_j$ the Laplace operator $\Delta_\abs^1[M^Y]$ has no eigenvalue in the open interval $]0,\lambda_1[$ (for some $\lambda_1>0$ depending only on $V$) as soon as $Y_j\ge C\alpha_j$, and we then get from \eqref{majojo} and \eqref{majojomojo} that 
\begin{align*}
\|f-f_Y\|_{L^2(M^Y)} &\le \|f-f_Y'\|_{L^2(M^Y)} + \|f_Y'-f_Y\|_{L^2(M^Y)} \\ 
                &\le \|f\|_{L^2(M^Y)} \left(\int _{M^Y-M^{Y/2}} (\sum_{j=1}^h e^{-c\min_j(y_j(x)/\alpha_j)})^2dx\right)^{\frac 1 2} \\ 
                &\quad + \frac 2{\lambda_1} \|(f_P-f)\Delta^1[M^Y]h(Y/y)\|_{L^2(M^Y)} \\
                &\ll \|f\|_{L^2(M^Y)} \left(\int _{M^Y-M^{Y/2}} e^{-c\min_j y_j(x)/\alpha_j} dx\right)^{\frac 1 2} \\
                &\le \|f\|_{L^2(M^Y)} \vol(M^Y-M^{Y/2})e^{-c\min_j(Y_j/\alpha_j)}
\end{align*}
where the last line is a consequence of Cauchy-Schwarz inequality. 
\end{proof}

\begin{proof}[Proof of Proposition \ref{compcomb}]
Let $\Upsilon\in[1,+\infty)^{h_n}$; we have 
$$
\frac{\tau(M_n;V_\ZZ)}{\tau_\abs(M_n^\Upsilon;V_\ZZ)} = \frac{\vol H^2(M_n;V_\ZZ)_\free}{\vol H^2(M_n^\Upsilon;V_\ZZ)_\free} \frac{\vol H^1(M_n^\Upsilon;V_\ZZ)_\free}{\vol H^1(M_n;V_\ZZ)_\free } 
$$
and we will thus show that for $p=1,2$ we have 
$$
\log\vol H^p(M_n;V_\ZZ)_\free - \log\vol H^p(M_n^{\Upsilon^n};V_\ZZ)_\free = o(\vol M_n) 
$$
for a well-chosen sequence $\Upsilon^n$. 

We will deal only with $p=1$, the case $p=2$ being similar. Let $r_n$ be the restriction map $H^1(M_n;V_\CC)\to H^1(M_n^\Upsilon;V_\CC)$. As the inclusion $M_n^\Upsilon \subset M_n$ is an homotopy equivalence, it induces an isomorphism between the cohomology groups and we get that 
$$
\vol H^1(M_n^\Upsilon;V_\ZZ)_\free = |\det(r_n)|\vol H^1(M_n;V_\ZZ)_\free 
$$
where the determinant is taken with respect to unitary bases on each space (the left-hand space being endowed with the inner product defined by \eqref{deg1} and the right-hand on with the $L^2$ inner product coming from harmonic forms). We will show below that $\log|\det(r_n)|=o(\vol M_n)$, in fact that $|r_n|,|r_n|^{-1}\le 1+\eps_n$ for some sequence $\eps_n$ such that $b_1(M_n;V_\CC)\log\eps_n=o(\vol M_n)$). 

We take back the notation $f_\Upsilon$ from Lemma \ref{cvagain}, if $f$ is a closed form on $M_n$ we have $r_n[f]=[f_\Upsilon]$. To bound $\|f_\Upsilon\|_{L^2(M^Y)}$ above we write 
\begin{equation}
\|f_\Upsilon\|_{L^2(M^\Upsilon)}\le \|f\|_{L^2(M^\Upsilon)} + \|f-f_\Upsilon\|_{L^2(M^\Upsilon)} \le (1+C\sum_{j=1}^{h_n}\frac{\alpha_2(\Lambda_{n,j})}{\alpha_1(\Lambda_{n,j})})\|f\|_{L^2(M^\Upsilon)}
\label{brutasse}
\end{equation}
where the second inequality follows from Lemma \ref{cvagain} and the rough bound $\vol(M^\Upsilon-M^{\Upsilon/2})\le C\sum_{j=1}^{h_n}\frac{\alpha_2(\Lambda_{n,j})}{\alpha_1(\Lambda_{n,j})}$. Now we will bound the right-hand side using the following lemma. 

\begin{lemma}
Let $Y=\max_j \Upsilon_j$, $a=s_V^1$ and $\fri_n$ be the level of $\Gamma_n$. There are $b,C>0$ (depending on $F$ and $V$) such that 
\begin{equation}
(C^{-1}Y^a - C|\fri_n|^b)\|[f]\|_{H^1(M_n)} \le \|f\|_{L^2(M_n^\Upsilon)}\le C(Y^a+|\fri_n|^b)\|[f]\|_{H^1(M_n)}. 
\end{equation}
\label{commode}
\end{lemma}

\begin{proof}
Let $M_n^Y$ be the truncated manifold at height $(Y,\ldots,Y)\in[1,+\infty)^{h_n}$ so that $M_n^\Upsilon \subset M_n^Y$ and $\|f\|_{L^2(M_n^{Y^n})}  \le \|f\|_{L^2(M_n^Y)} \le \|T^Yf\|_{L^2(M_n)} $. The Maass-Selberg relations \eqref{Maass-Selberg-real} yield that 
\begin{align*}
\|T^Yf\|_{L^2(M_n)}^2 &\le \frac{Y^{2s_V^1-1}}{2s_V^1-1}\|\omega\|_{L^2\pl\ovl M}^2 + \frac{Y^{-2s_V^1+1}}{2s_V^1-1}\|\Phi^+(s_V^1)\omega\|_{L^2(\pl\ovl M)}^2 \\
               &\quad + \log Y \|\Phi^+(s_V^1)\omega\|_{L^1(\pl\ovl M)}\,\|\omega\|_{L^2(\pl\ovl M)} + \left\|\frac{d\Phi^+(s_V^1+iu)}{du}|_{u=0}\omega\right\|_{L^2(\pl\ovl M)}\,\|\omega\|_{L^2(\pl\ovl M)}.
\end{align*}
From proposition \ref{estentreal} it now follows that 
$$
\frac{\|T^Yf\|_{L^2(M_n)}^2}{|\omega|_{L^2(\pl\ovl M)}^2} \ll Y^{2s_V^1} + |\fri_n|^c (1+\log Y) \ll Y^{2a} + |\fri_n|^{2c}
$$
which deals with the upper bound; the lower bound is proved in a similar manner. 
\end{proof}

We have 
$$
\sum_{j=1}^{h_n}\frac{\alpha_2(\Lambda_{n,j})}{\alpha_1(\Lambda_{n,j})}\le |\fri_n| h_n\le h_F|K_f/N_{\fri_n}K_f(\fri_n)|\cdot|\fri_n| \le 2h_F|\fri_n|^3
$$
and it now follows from \eqref{brutasse} and Lemma \ref{commode} that for some $e>0$ we have 
\begin{equation}
\|f_\Upsilon\|_{L^2(M^\Upsilon)} \ll |\fri_n|^e Y^e 
\label{majorn}
\end{equation}
(we keep the notation $Y=\max_j\Upsilon_j$). 

The lower bound for $\|f_\Upsilon\|_{L^2(M^\Upsilon)}$ is more subtle. We have  
\begin{align*}
\|f_\Upsilon\|_{L^2(M^\Upsilon)} &\ge \|f\|_{L^2(M^\Upsilon)}-\|f-f_\Upsilon\|_{L^2(M^\Upsilon)} \\
              &\ge (1-\vol(M^\Upsilon-M^{\Upsilon/2})e^{-cY/\max\alpha_n^j}) \, \|f\|_{L^2(M^\Upsilon)} 
\end{align*}
where the second minoration follows from Lemma \ref{cvagain}. We have $\max\alpha_n^j\ll |\fri|^{\frac 1 2}$ and also $\vol(M^\Upsilon-M^{\Upsilon/2})\ll \sum_j \frac{\alpha_2}{\alpha_1}$ which is bounded by $|\fri_n|^3$, and it follows from Lemma \ref{commode} that 
\begin{equation}
\|f_\Upsilon\|_{L^2(M^\Upsilon)} \ge \left( 1 - C|\fri_n|^2\exp\left(-c\frac Y{|\fri_n|^{\frac 12}}\right)\right)(C^{-1}Y^a-C|\fri_n|^b)\|[f]\|_{H^1(M_n)}.
\label{minorn}
\end{equation}

For $A$ large enough and $\Upsilon_j^n=|\fri_n|^{A-1}$ we get from \eqref{majorn} and \eqref{minorn} that  
$$
1/2 \|[f]\|_{H^1(M_n)} \le \|f_Y\|_{L^2(M_n^{\Upsilon^n})} \le C|\fri_n|^{Ae}\|[f]\|_{H^1(M_n)}. 
$$
Thus $|r_n|^{-1}\le 2$ and $|r_n|\le C|\fri_n|^{Ae}$ and as $\dim H^1(M_n;V_\CC)=h_n$ it follows that 
$$
|\log\det(r_n)| \ll h_n\log|\fri_n|, 
$$
and as $h_n\ll(\vol M_n)^{1-\delta}$ (Lemma \ref{nbcusps}) the right-hand side is an $o(\vol M_n)$, as we wanted to show. 
\end{proof}


\subsection{Comparing absolute torsions}
\label{ca}
The following result is necessary to be able to use together Proposition \ref{compcomb} below and Theorem B in \cite{volI}, and its proof completes that of Theorem \ref{CMAfin}. 

\begin{prop}
Let $Y^n$ be the sequence from Theorem B of \cite{volI}. For any sequence $\Upsilon^n\in[1,+\infty)^{h_n}$ such that there is a $c>0$ for which $Y_j^n\le\Upsilon_j^n\le|\fri|^c$ we have 
$$
|\log\tau_\abs(M_n^{\Upsilon^n})-\log\tau_\abs(M_n^{Y^n})| \ll \dim H^*(M_n;V_\CC) \log|\fri_n|. 
$$
\label{compabs}
\end{prop}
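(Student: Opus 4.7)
The plan is to observe that in the difference $\log\tau_\abs(M_n^{\Upsilon^n})-\log\tau_\abs(M_n^{Y^n})$ the ``combinatorial'' contributions from $|H^p(M_n;V_\ZZ)_\tors|$ cancel, since they are intrinsic to $M_n$ and do not depend on the choice of truncation. Only the regulator terms remain, and the task is reduced to showing
$$
\left|\log\frac{R^p(M_n^{\Upsilon^n})}{R^p(M_n^{Y^n})}\right| \ll \dim H^p(M_n;V_\CC)\,\log|\fri_n|, \quad p=1,2.
$$

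For each such $p$, the ratio $R^p(M_n^{\Upsilon^n})/R^p(M_n^{Y^n})$ equals $|\det j_p|$, where $j_p\colon\mathcal{H}_\abs^p(M_n^{Y^n};V_\CC)\to\mathcal{H}_\abs^p(M_n^{\Upsilon^n};V_\CC)$ is the canonical comparison isomorphism (sending the harmonic representative of a cohomology class on $M_n^{Y^n}$ to the harmonic representative with absolute boundary conditions of the same class on $M_n^{\Upsilon^n}$), the determinant being computed with respect to the $L^2$ inner products on each side. To control this determinant it is enough to bound $\|j_p\|$ and $\|j_p^{-1}\|$ by some polynomial $|\fri_n|^C$, since then
$$
|\log|\det j_p||\le\dim H^p(M_n;V_\CC)\,\log\max(\|j_p\|,\|j_p^{-1}\|)\le C\,\dim H^p(M_n;V_\CC)\,\log|\fri_n|.
$$

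The technical heart of the argument is to show that for any class $[f]\in H^p(M_n;V_\CC)$ and any truncation parameter $Z\in\{Y^n,\Upsilon^n\}$, the $L^2$ norm $\|f_Z\|_{L^2(M_n^Z)}$ of the harmonic representative is comparable, up to factors polynomial in $|\fri_n|$, to the intrinsic Eisenstein-theoretic norm $\|[f]\|_{H^p(M_n)}$ introduced in \eqref{deg1} and \eqref{deg2}. For $p=1$ this is essentially Lemma \ref{commode} combined with Lemma \ref{cvagain}: the harmonic form $f_Z$ differs from the truncated Eisenstein series $T^Zf$ (with $f=E(s_V^1,\omega)$) by an exponentially small error, while the Maass--Selberg relation \eqref{Maass-Selberg-real} together with Proposition \ref{estentreal} squeezes $\|T^Zf\|_{L^2(M_n)}$ between $C^{-1}(Z^a-|\fri_n|^b)\|\omega\|_{L^2(\pl\ovl M)}$ and $C(Z^a+|\fri_n|^b)\|\omega\|_{L^2(\pl\ovl M)}$ for appropriate $a,b>0$. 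The case $p=2$ is handled in exactly the same way, starting from the description provided by Lemma \ref{deg2eis}.

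Since by hypothesis both $Y_j^n$ and $\Upsilon_j^n$ lie in an interval whose endpoints are polynomially bounded in $|\fri_n|$, combining the upper bound at $Z=\Upsilon^n$ with the lower bound at $Z=Y^n$ (and vice versa) yields $\|j_p\|,\|j_p^{-1}\|\le|\fri_n|^C$, completing the proof. The one delicate point to verify carefully is that the sequence $Y^n$ produced by Theorem B of \cite{volI} satisfies $(Y_j^n)^a\gg|\fri_n|^b$, so that the lower bound in Lemma \ref{commode} is non-trivial; this is the expected main obstacle, and it requires inspecting the explicit construction of $Y^n$ from \cite{volI} to confirm the required polynomial lower bound on its entries.
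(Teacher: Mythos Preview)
Your reduction is correct: the torsion orders cancel and the problem is exactly the comparison of the regulators $R^p(M_n^{\Upsilon^n})$ and $R^p(M_n^{Y^n})$, i.e.\ bounding $|\det j_p|$. The paper, however, proves this by a completely different method: it builds a smooth one--parameter family of metrics $g_u$ on $\ovl M_n$ with $(\ovl M_n,g_u)$ isometric to $M_n^u$, applies the Ray--Singer variation formula
\[
\frac d{du}\log\tau_\abs(\ovl M_n;g_u)=\tr\alpha_u,\qquad \alpha_u=*_u^{-1}\frac{d*_u}{du}\big|_{\ker\Delta_\abs},
\]
and shows the eigenvalues of $\alpha_u$ are $\ll u^{-1}$ by a pointwise estimate on harmonic $1$-forms (their $dy$-component has zero constant term, hence is exponentially small). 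Integrating $\tr\alpha_u$ from $Y_j^n$ to $|\fri_n|^c$ then gives $\dim H^*(M_n;V_\CC)\cdot\log|\fri_n|$ with no lower bound on $Y_j^n$ required.

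Your route through the Eisenstein norm runs into precisely the obstacle you flagged, and it is more serious than ``inspect the construction of $Y^n$''. The lower bound in Lemma~\ref{commode} is $(C^{-1}Y^a-C|\fri_n|^b)\|[f]\|$, which is only useful when $Y^a\gg|\fri_n|^b$. From the definition of $Y^n$ in \cite{volI} (recalled in the paper's proof of this Proposition) one gets $Y_j^n\gg\alpha_1(\Lambda_{n,j})|\fri_n|^\delta$ for a small $\delta>0$ coming from the Benjamini--Schramm rate in Theorem~\ref{BSbianchi}, whereas the exponent $b$ in Proposition~\ref{estentreal} is an unspecified constant depending on $\sigma=s_V^1$ (the paper explicitly says it does not pursue tight estimates there). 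There is no reason for $a\delta\ge b$ to hold, so your lower bound on $\|f_{Y^n}\|$ is in general vacuous. Since $\Upsilon^n$ is allowed to be as small as $Y^n$, the same problem obstructs the lower bound at $\Upsilon^n$. The metric--variation argument in the paper sidesteps this entirely because the bound $|\lambda_u|\ll u^{-1}$ is geometric and needs nothing about intertwining operators off the critical line.
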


\begin{proof}
It suffices to prove the result for $\Upsilon_j^n=|\fri_n|^c$. We will use  a smooth family of Riemannian metrics $g_u$, $u\in[1,+\infty)$ on $\ovl M$ such that 
\begin{itemize}
\item[(i)] $(\ovl M,g_u)$ is isometric to $M^u$ through a diffeomorphism $\phi_u$.
\item[(ii)] For $u/2\le v\le u$, $\phi_u\circ\phi_v^{-1}|_{M^{v/2}}$ is the inclusion map $M^{v/2}\subset M^u$.
\item[(iii)] Let $V$ be the line field perpendicular to horospheres (defined on $M-M^1$). We have 
\begin{equation}
\frac{dg_u}{du}\ll \frac 1 u g_u|_{V^\bot} + g_u|_V.
\label{ptm}
\end{equation}
\end{itemize}
Let us prove that such a family exists. We identify a collar neighbourhood $N$ of the boundary in $\ovl M$ with $\bigcup_j T_j\times[0,1]$ where the $T_j$ are the boundary components of $\pl M^1$. The metrics $g_u$ defined as follows do the job, as can be checked by an easy computation: on $\ovl M-N\cong M^1$ $g_u$ is the hyperbolic metric, an in the cusps we put
$$
|(v_1,v_2)|_{g_u}^2 = \frac 1{(u\, h(ut+1-u))^2}\left(|v_1|^2+u^2h'(ut+1-u)|v_2|^2\right), \quad v_2\in V_{(x,t)}, \, v_1\in V_{(x,t)}^\bot
$$
for $x\in T_j, t\in[0,1]$ where $h$ is a bump function which takes the value $1$ for $t\le 0$ and $1$ for $t\ge 1$ and we identify $V_{(x,t)}^\bot=T_xT_j$ and $V_{(x,t)}$ with the orthogonal complement of the latter in $T_xM^1$. 

Let $*_u$ be the Hodge star for $g_u$, put $\op=d*/du$ and $\alpha_u=*_u^{-1}\op \in \End_\CC\ker\Delta_\abs[M,g_u]$. Then we have \cite[Theorem 7.6]{RS} 
$$
\frac d{du}\log\tau_{abs}(M;g_u)=\tr(\alpha_u).
$$ 
Let $\lambda_u$ be the largest eigenvalue of $\alpha_u$, so that 
\begin{equation*}
|\log\tau_\abs(M^{Y^n})-\log\tau_\abs(M^{\Upsilon^n})| \le \sum_{j=1}^{h_n}\int_{Y_j}^{|\fri_n|^c} |\lambda_u|du.
\end{equation*}
Thus the result would follow if we proved that $|\lambda_u|\ll \frac 1 u$ for $u\ge Y_j$. First we compute the eigenvalues: if $f$ is an eigenform of $\alpha_u$ with norm 1 and eigenvalue $\lambda$ we have 
$$
\lambda=\frac{d\|v\|_{g_u}}{du}. 
$$
Indeed, $\langle *^{-1}\op f,f\rangle=\lambda$, so that we get $\lambda=\int_{\ovl M} \op f \wedge f = \frac d{du}\int_{\ovl M} *f\wedge f$. 

Now let $f$ be a harmonic 1-form for the metric $g_u$ which is an eigenform for $\alpha_u$; we want to see that $d|f|_{g_u}/du \ll u^{-1}$. On $M-M^1$ write $f=f_1+f_2$ according to the decompostion $TM=V\oplus V^\bot$ (in coordinates $f_1$ is the composant on $dy$), then according to \eqref{ptm} we have the pointwise inequality
$$
\left|\frac {d|f|_{g_u}}{du}\right| \ll |f_1|_{g_u}+u^{-1}|f_2|_{g_u} 
$$
so that we need to show that $|f_1|_{g_u}\ll u^{-1}$ on $M^u-M^{\frac u 2}$. The fact that $f$ is co-closed implies that $f_1$ has a vanishing constant term and it follows that 
$$
|f_1| = |f_1 - (f_1)_P| \le |f-f_P| \ll e^{-y_j/\alpha_1(\Lambda_j)}
$$
where the estimate is a consequence of \cite[Lemma 6.2.1]{CV}. The right-hand side is $\ll u^{-1}$ : indeed, the sequence $Y_j^n$ was defined in \cite{volI} as 
$$
Y_j^n = \alpha_1(\Lambda_{n,j}) \times\left(\frac{\vol M_n}{\sum_{j=1}^{h_n}(\alpha_2(\Lambda_{n,j})/\alpha_1(\Lambda_{n,j}))^2} \right)^{\frac 1{10}}
$$
and it follows from Lemma \ref{nbcusps} and the cusp--uniformity of the $M_n$ that $Y_j\gg \alpha_1(\lambda_j)|\fri_n|^\delta$ for some $\delta>0$. Thus, as we consider only $|\fri_n|^c\ge u\ge Y_j^n/2$ we get $\frac u{\alpha_1(\Lambda_j)}\gg u^\eta$ for some $\eta>0$ (depending on $\Upsilon_n$) and clearly $e^{-u^\eta}\ll u^{-1}$. 
\end{proof}


\section{Torsion in (co)homology} \label{endgame}

We can now finish the proof of Theorem \ref{Main}, whose statement we recall below. 

\begin{theo}
Let $\Gamma$ be a Bianchi group, $\Gamma_n$ a cusp-uniform sequence of torsion-free congruence subgroups and $M_n=\Gamma_n\bs\HH^3$. Let $V$ be a real representation of $\SL_2(\CC)$ and $V_\ZZ$ a lattice in $V$ preserved by $\Gamma$. If $V$ is strongly acyclic then we have 
\begin{equation} \label{torsho}
\limsup_{n\to\infty}\frac{\log|H_1(\Gamma_n;V_\ZZ)_\tors|}{\vol M_n} \le -t^{(2)}(V). 
\end{equation}
and 
\begin{equation} \label{torsco}
\limsup_{n\to\infty}\frac{\log|H^2(\Gamma_n;V_\ZZ)_\tors|}{\vol M_n} \le -t^{(2)}(V). 
\end{equation}
\end{theo}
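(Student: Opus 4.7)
The plan is to extract the asymptotic growth of $|H^2(M_n;V_\ZZ)_\tors|$ from the Reidemeister torsion $\tau(M_n;V_\ZZ)$, whose asymptotic behaviour is now under control. Combining Theorem~\ref{CMAfin} with Theorem~\ref{analytic} we have
\[
\lim_{n\to\infty}\frac{\log\tau(M_n;V_\ZZ)}{\vol M_n} \;=\; t^{(2)}(V).
\]
Unpacking the definition \eqref{defRtors} writes $\log\tau(M_n;V_\ZZ)$ as a sum of four terms. The strategy is to prove that three of these terms are $o(\vol M_n)$, namely
\[
\log|H^1(M_n;V_\ZZ)_\tors|,\; \log\vol H^1(M_n;V_\ZZ)_\free,\; \log\vol H^2(M_n;V_\ZZ)_\free \;=\; o(\vol M_n),
\]
so that the remaining contribution $-\log|H^2(M_n;V_\ZZ)_\tors|$ carries all the growth; this gives \eqref{torsco}, and then \eqref{torsho} follows by Poincar\'e--Lefschetz duality.

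To bound the free parts, note that by \eqref{eis=.5bd} and the description in \ref{cohbd} the Betti numbers $b_p(M_n;V_\CC)$ for $p=1,2$ are a fixed multiple of the number of cusps $h_n$, which by Lemma~\ref{nbcusps} is $O((\vol M_n)^{1-\delta})$. For each $p$ I would construct near-integral representatives of $H^p(M_n;V_\ZZ)_\free$ out of Eisenstein images $E^p(\omega)$ (Lemmas~\ref{deg1eis}, \ref{deg2eis}) of integral boundary classes: these representatives are rational, with denominators controlled by the values on integral vectors of the intertwining operators $\Phi^\pm(s_V^p)$ appearing in their constant terms. A variant of Proposition~\ref{estentreal} on the real axis, combined with standard control of the $L$-values normalising the intertwining operators, bounds those denominators polynomially in the level $|\fri_n|$; the Gram matrix of the inner products \eqref{deg1}, \eqref{deg2} on the generated sublattice is polynomially bounded via the Maass--Selberg relations \eqref{Maass-Selberg-real}. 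Since $|\fri_n|\ll(\vol M_n)^{3}$ by Lemma~\ref{index-level}, this gives $\log\vol H^p(M_n;V_\ZZ)_\free \ll b_p(M_n)\log|\fri_n| = o(\vol M_n)$ for $p=1,2$.

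The torsion of $H^1(M_n;V_\ZZ)$ I would control by chasing the long exact sequence of the Borel--Serre pair $(\ovl M_n,\pl\ovl M_n)$: strong acyclicity of $V$ gives $H^0(M_n;V_\ZZ)=0$, so the kernel of $i_1^*$ is governed by the torsion of $H^0(\pl\ovl M_n;V_\ZZ)$ and of $H^1(\ovl M_n,\pl\ovl M_n;V_\ZZ)$. The torsion of $H^*(\pl\ovl M_n;V_\ZZ)$ decomposes as a direct sum over the $h_n$ cuspidal 2-tori, each summand having order polynomial in $|\fri_n|$ by an elementary group cohomology computation for $\ZZ^2$ acting on $V_\ZZ$; relative boundary torsion is then controlled by the symmetric argument through Poincar\'e duality (Proposition~\ref{pd}). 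After multiplying by $h_n$ and $\log|\fri_n|$ this yields the desired $o(\vol M_n)$ bound on $\log|H^1(M_n;V_\ZZ)_\tors|$.

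Finally, \eqref{torsho} is obtained from \eqref{torsco} by Poincar\'e--Lefschetz duality: $H_1(M_n;V_\ZZ)_\tors \cong H^2(\ovl M_n,\pl\ovl M_n;V_\ZZ')_\tors$, which through the long exact sequence of the pair together with the above polynomial bound on boundary torsion is identified with $H^2(M_n;V_\ZZ')_\tors$ up to a factor that is $o(\vol M_n)$ on the log scale. Since $V_\ZZ'$ and $V_\ZZ$ differ by a sublattice of bounded index, applying \eqref{torsco} to $V_\ZZ'$ closes the argument. The main obstacle will be the integrality estimate for Eisenstein cohomology classes: one has to tie together the algebraic rationality of the $\SL_2(F)$-action on $V_\ZZ$ and the analytic normalisation of the constant term through $\Phi^\pm(s_V^p)$, in order to extract a polynomial-in-$|\fri_n|$ bound on the denominator by which $E^p(\omega)$ must be multiplied to land in $H^p(M_n;V_\ZZ)$.
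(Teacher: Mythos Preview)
Your outline is correct and matches the paper's approach closely: the same decomposition of $\log\tau$, the same three $o(\vol M_n)$ terms (handled in the paper as Lemmas \ref{tors1}, \ref{reg1}, \ref{reg2}), and the same passage from \eqref{torsco} to \eqref{torsho} via Poincar\'e duality and the long exact sequence of $(\ovl M_n,\pl\ovl M_n)$; your route through $H^2(\ovl M_n,\pl\ovl M_n;V_\ZZ')$ is in fact the alternative the paper mentions at the end of \ref{cohohoho}.

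Two points where you should sharpen. First, the ``standard control of the $L$-values'' you invoke for the integrality of Eisenstein classes is not soft: the paper's Proposition~\ref{inteis} rests on Damerell's theorem on special values of Hecke $L$-functions for imaginary quadratic fields, which is what pins down the denominator of $L(\chi,2s_V^1-1)/L(\chi,2s_V^1)$ up to a polynomial in $|\frf_\chi|$; without this arithmetic input the analytic estimates on $\Psi(s)$ alone do not give integrality. Second, your sketch for $\log\vol H^1$ only addresses the upper bound (producing a sublattice of controlled covolume inside $H^1(M_n;V_\ZZ)_\free$); you also need the lower bound, which the paper obtains by projecting $i_1^*H^1(M_n;V_\ZZ)$ onto $H^{1,0}(\pl\ovl M_n;V_\CC)$ and comparing with the integral boundary lattice via Lemma~\ref{splitt}. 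For degree~2 the argument is simpler than you indicate: $i_2^*$ is already an isometry with cokernel of size $|H_0(M_n;V_\ZZ)|$, so no Eisenstein integrality is needed there.
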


Let us describe how the results in this section articulate to yield this result. Recall that in \eqref{defRtors} we have defined a Reidemeister torsion for the congruence manifolds $M_n=\Gamma_n\bs\HH^3$ the logarithm of which is given by 
\begin{equation} \label{reidhere}
\begin{split}
\log\tau(M_n;V_\ZZ) &= \log|H^1(M_n;V_\ZZ)_\tors| - \log\vol H^1(M_n;V_\ZZ)_\free\\ 
                   &\quad + \log\vol H^2(M_n;V_\ZZ)_\free - \log |H^2(M_n;V_\ZZ)_\tors|.
\end{split}
\end{equation}
It follows from Theorems \ref{analytic} and \ref{CMAfin} that 
$$
\lim_{n\to\infty} \frac{\log\tau(M_n;V)}{\vol M_n} = t^{(2)}(V)
$$
and by Lemmas \ref{tors1}, \ref{reg2} and \ref{reg1} below all terms in \eqref{reidhere} except $-\log|H^2(M_n;V_\ZZ)_\tors|$ have a negative limit superior as $n\to\infty$. This proves \eqref{torsco}; we will deduce \eqref{torsho} from it in \ref{cohohoho} at the end of the section.


\subsection{Integral homology of the boundary} 

We have previously described the cohomology of the boundary with coefficients in $V_\CC$ using differential forms; to analyze the terms \eqref{reidhere} we will need a precise description of the integral homology and cohomology through cell complexes.

\subsubsection{Cell complexes}

Let $T$ be a 2--tori, $U$ a finite-rank free $\ZZ$-module with a representation $\rho:\pi_1(T)\to \SL(U)$. We fix a cell structure on $T$ with one 2--cell, $e^2$, two 1--cells $e_1^1,e_2^1$ and one 0--cell $e^0$ and denote by $u_1,u_2$ the associated basis for $\pi_1(T)$ (i.e. $u_i$ is the homotopy class of the loop $e_i^1$). Then we have an isomorphism of $\ZZ$-complexes $C_*(\wdt T;U)\cong C_*(\wdt T)\otimes U$ which yields an isomorphism of graded modules 
\[
C_*(T;U)=C_*(\wdt T;U)\underset{\ZZ[\pi_1(T)]}{\otimes}\ZZ\cong c_*(T)\otimes U.
\]
In this model the differentials for $C_*(T;U)$ are given by 
\begin{equation} \label{diff}
\begin{split}
d_2(e^2\otimes v)   &= e_1^1\otimes(v-\rho(u_2)v) + e_2^1\otimes(\rho(u_1)v-v),\\
d_1(e_i^1\otimes v) &= e^0\otimes(v-\rho(u_i)v).
\end{split}
\end{equation}


\subsubsection{Growth of torsion}

\begin{lemma}
Let $\Lambda$ be a lattice in a unipotent $F$-rational subgroup $N$, then for any sequence of pairwise distinct finite-index subgroups $\Lambda_n$ in $\Lambda$ we have 
$$
\log|H_0(\Lambda_n;V_\ZZ)_\tors|, \log|H_1(\Lambda_n;V_\ZZ)_\tors| = o([\Lambda:\Lambda_n]). 
$$
\label{torsbord}
\end{lemma}

\begin{proof}
We prove the result only for $\Lambda=1+\so_FX_\infty$ (where $X_\infty=\begin{pmatrix}0&1\\0&0\end{pmatrix}$), the general case can be reduced to that particular one. In the proof of Lemma \ref{sgoc} below we will show that if $1+aX_\infty\in\Lambda'$ then $(\Lambda'-1)V_\ZZ\supset Na\ovl V_\ZZ$, where  $\ovl V_\ZZ=\ker\rho(X_\infty)$. In particular, putting $d=\dim V$ we get: 
$$
|(V_\ZZ/(\Lambda'-1)V_\ZZ)_\tors| \le (N[\Lambda:\Lambda'])^d
$$
and the result about $H_0$ follows at once.  

Write now $\Lambda'=\ZZ u_1\oplus\ZZ u_2$. From \eqref{diff} we know that $H_1(\Lambda';V_\ZZ)$ embeds in $(V_\ZZ\oplus V_\ZZ)/\im(\rho(u_1)-1)\oplus(\rho(u_2)-1)$. The $\ZZ$-torsion of the latter itself embeds into 
$$
V_\ZZ/(\im(\rho(u_1)-1)\oplus V_\ZZ(\rho(u_2)-1)). 
$$
Now this last module has a torsion the order of which is bounded by $(N|u_1|^2\times N|u_2|^2)^d \ll [\Lambda:\Lambda']^{4d}$, and this finishes the proof for $H_1$. 
\end{proof}

\subsubsection{Free part of the homology}

Suppose now that $T$ has an Euclidean structure, so that its homology groups with coefficients in $V_\CC$ are endowed with the $L^2$ inner product and have a Hodge decomposition $H^1=H^{1,0}\oplus H^{0,1}$, which in the case of a boundary component of an hyperbolic manifold corresponds to the decomposition in \ref{cohbd}. We use the rational structure on the $F$-vector space $V_\QQ=\left(\sym^{n_1}F^2\right)\otimes\left(\sym^{n_2}\ovl{F^2}\right)$ given by restricting the scalars from $F$ to $\QQ$, which induces a rational structure on $H^*(\Gamma;V_\QQ)$; recall that a $\CC$-subspace $W\subset V_\CC$ is called rational when $\dim_\QQ(W\cap V_\QQ)=\dim_\CC W$.  

\begin{lemma}
\label{splitt}
The subspaces $H^{1,0}(T;V_\CC)$ and $H^{0,1}(T;V_\CC)$ are rational ; moreover there are constants $C,c$ depending only on $T,V$ such that for any finite cover $T'$ of $T$ of degree $D$, we have 
$$
[H^1(T';V_\ZZ) : H^{1,0}(T';V_\ZZ) \oplus H^{0,1}(T';V_\ZZ)] \le CD^c
$$
\end{lemma}

\begin{proof}
Let $z$ be a complex coordinate for $T'$ and put
$$
\omega_1 = d\ovl z\otimes(\rho(n_z)e_{0,n_2}), \quad \omega_2= dz\otimes(\rho(n_z)e_{n_1,0}), 
$$
then $\omega_1,\omega_2$ are generators (over $\CC$) for $H^{1,0}(T';V_\CC)$ and $H^{0,1}(T';V_\CC)$ respectively; we will check that they are rational and that they generate (over $\so_F$) a lattice whose index in $H^1$ is bounded by a polynomial in $D$. 

For that purpose we choose a basis $u_1,u_2$ for $\pi_1(T')$ such that $D \ge C_1|u_1| |u_2|$, where $C_1$ depends only on $T$. We will construct first a cycle 
\begin{equation}
\theta_2 = (P_0(u_1,u_2)e_1^1 + Q_0(u_1,u_2)e_2^1)\otimes \ovl e_{n_1,0} + \sum_{l=1}^{n_1} P_l(u_1,u_2)e_1^1\otimes e_{n_1-l,0}
\label{cycle1}
\end{equation}
where $P_i,Q_0$ are polynomials with integer coefficients depending only on $T,V$. We see that 
\begin{align}
\begin{split}
(\omega_1,\theta_2) &= 0 ; \\
(\omega_2,\theta_2) &= N^{-1}P(u_1,u_2)
\end{split}
\label{rel1}
\end{align}
where $P$ is an integral polynomial and $N$ an integer, both depending only on $T,V$. 

Let us show how to proceed to the construction of \eqref{cycle1}: write 
$$
\rho(z)e_{n_1-l,0} = e_{n_1-l,0} + \sum_{k=l+1}^{n_1} Q_k^l(z) e_{n_1-k,0}
$$
where $Q_k^l(z)\in\so_F[z]$ ; for ease of notation we identify $v=e^0\otimes v$, then we get that 
$$
d_1\left( (Q_1^0(u_2)e_1^1 - Q_1^0(u_1)e_2^1)\otimes e_{N_1,0}\right) = \otimes \sum_{l=2}^{n_1} P_l^1(u_1,u_2) e_{n_1-l,0}
$$ 
Let $\theta=(Q_1^0(u_2)e_1^1 - Q_1^0(u_1)e_2^1)\otimes e_{N_1,0}$, we further get that 
$$
d_1\left( \theta - \frac{P_2^1(u_1,u_2)}{Q_2^1(u_1)} e_1^1\otimes e_{n_1-1,0}  \right) = \sum_{l=3}^{n_1} \frac{P_l^2(u_1,u_2)}{Q_2^1(u_1)} e_{n_1-l,0}.
$$
Iterating this procedure until we reach $n_1$ we get a cycle whose coefficients are rational fractions in $u_1,u_2$, so that we only have to multiply by its denominator to get \eqref{cycle1}. 

In the same manner we can construct another cycle $\theta_1$ which satisfies 
\begin{align}
\begin{split}
(\omega_1,\theta_1) &= M^{-1}Q(u_1,u_2) ; \\
(\omega_2,\theta_2) &= 0
\end{split}
\label{rel2}
\end{align}
where $M,Q$ depend only on $T,V$. 

The index of the lattice $\so_F\theta_1\oplus\so_F\theta_2$ in $H_1(T';V_\ZZ)$ is bounded by a polynomial in $u_1,u_2$, which is itself bounded by a polynomial in $D$, say $R(D)$. On the other hand the computations \eqref{rel1},\eqref{rel2} show that $NR(D)\omega_2,MR(D)\omega_1$ are integral, and clearly the index of the lattice they span is bounded by a polynomial in $u_1,u_2$, which finishes the proof. 
\end{proof}

We now easily deduce :

\begin{lemma}
The covolumes $\vol H^p(T',V_\ZZ)_\free$ for $p=0,1,2,(0,1)$ and $(1,0)$ are all $o(\vol T')$ in a sequence of finite covers of $T$.
\label{volbd}
\end{lemma}

\begin{proof}
For $p=0,2$ this is trivial. For $p=(1,0), (0,1)$ we see that the $L^2$-norms of the integral classes constructed in the proof of Lemma \ref{splitt} are polynomially bounded in $\vol T'$, which implies the result in this case, and for $p=1$ we deduce it from the latter and Lemma \ref{splitt}. 
\end{proof}


\subsection{Subexponential growth of torsion} 

We prove here that in degrees other than $2$ the torsion in cohomology has subexponential growth. 

\subsubsection{Homology in degree 0}

\begin{lemma}
Let $\Gamma_n$ be a sequence of congruence subgroups in $\Gamma(\so_F)$, $M_n=\Gamma_n\bs\HH^3$. We have that $\log|H_0(M_n;V_\ZZ)|=o(\vol M_n)$ 
\label{sgoc}
\end{lemma}

\begin{proof}
We prove the result for principal congruence subgroups and then deduce the general case. To do the former we will show that $N\fri V_\ZZ\subset (\Gamma(\fri)-1)V_\ZZ$ for all $\fri$ and some integer $N$ depending only on $n_1,n_2$, so that $|H_0(M_\fri;V_\ZZ)|\le (N|\fri|)^{\dim V}$ from which it follows at once that $\log|H_0(M_\fri;V_\ZZ)|=O(\log|\fri|)$ is an $o(\vol M_\fri)$. 
 
Let $X_\infty=\begin{pmatrix} 0&1\\0&0\end{pmatrix}$ and $X_0=\begin{pmatrix} 0&0\\1&0\end{pmatrix}$. For $a\in\fri$ we have that $\eta_a=1+aX_\infty\in\Gamma(\fri)$. We begin by studying the case where $n_2=0,n_1=n$; put $e_1=(1,0)$ and $e_2=(0,1)$, then the family $e_1^n,e_1^{n-1}e_2,\ldots,e_2^n$ is an $\so_F$-basis of the free module $V_\ZZ$. Let $N$ be the product of all binomial coefficients $\binom{n}{k}$, we will see that $Nae_1^{n-k}e_2^k\in(\Gamma(\fri)-1)V_\ZZ$ for all $k<n$. Indeed, we have $ae_2^n=\eta_a.(e_1e_2^{n-1})-e_1e_2^{n-1}$, and on the other hand $\eta_a.(e_1^{k+1}e_2^{n-k-1})-e_1^{k+1}e_2^{n-k-1}$ is a linear combination of the $e_1^le_2^{n-l}$ for $l\ge k$ so we can prove this by induction on $k$. We also have that $ae_1^n=(1+aX_0)e_1^{n-1}e_2-e_1^{n-1}e_2\in(\Gamma(\fri)-1)V_\ZZ$, which finishes the proof in this case. The same arguments work in general. 

If $H\subset G_\fri$ is a proper subgroup there is an epimorphism $H_0(M_H;V_\ZZ)\to H_0(M_\fri;V_\ZZ)$. Letting $\fri_n$ be the level of $\Gamma_n$ we get that $\log|H_0(M_n;V_\ZZ)|=O(\log|\fri_n|)$, and it follows from this and Lemma \ref{index-level} that we have $\log|H_0(M_n;V_\ZZ)|=o(\vol M_n)$.
\end{proof}

\subsubsection{Cohomology in degree 1}

We will use the following elementary lemma in what follows.

\begin{lemma}
Let $A\in\hom(\ZZ^m,\ZZ^n)$ and $B\in\hom(\ZZ^n,\ZZ^m)$ such that for all $\varphi\in\hom(\ZZ^m,\ZZ)$ and $v\in\ZZ^n$ we have $(\varphi,Bv)=(\varphi\circ A,v)$. Then $\ZZ^m/B\ZZ^n$ and $\ZZ^n/A\ZZ^m$ have the same torsion subgroup.
\label{fudge}
\end{lemma}

\begin{proof}
In appropriate bases of $\ZZ^m,\ZZ^n$ the matrices of $A$ and $B$ are transpose of each other.
\end{proof}

\begin{lemma}
We have 
\begin{equation}
\frac{\log|H^1(M_n;V_\ZZ)_\tors|}{\vol M_n} \xrightarrow[n\to\infty]{} 0. 
\label{tors1}
\end{equation}
\end{lemma}

\begin{proof}
Recall that there is a $\Gamma(\so_F)$-invariant pairing on $V_\QQ$ and let $V_\ZZ'$ is the lattice in $V$ which is dual to $V_\ZZ$ through this pairing. For notational ease we will use $H_*,H^*$ to denote (co)homology with coefficients in $V_\ZZ$ and $H_*',H_{'}^*$ for $V_\ZZ'$-coefficients. The existence of the Kronecker pairing and the property \eqref{naturel}, together with Lemma \ref{fudge} imply that 
$$
[H_2(M)_\free : (\im i_*^2)_\free] = [H_{'}^2(\pl\ovl M)_\free : (\im i_2^*)_\free] 
$$
and it further follows that 
$$
[H_2(M)_\free : (\im i_*^2)_\free]  = [H_0'(\pl\ovl M)_\free : \im(\delta^1)_\free] \le |H_0'(M)| 
$$
where the equality follows from Poincar\'e duality and the majoration from the segment $H_1'(\ovl M,\pl\ovl M)\xrightarrow[]{\delta^1} H_0'(\pl\ovl M)\to H_0'(M)$ in the homology long exact sequence of the pair $\ovl M,\pl\ovl M$. Applying once more Poincar\'e duality we get 
\begin{equation}
[H^1(\ovl M,\pl\ovl M) : \im\delta_0] = [H_2(M)_\free : (\im i_*^2)_\free] \le |H_0'(M)_\tors|. 
\label{splorge}
\end{equation}
On the other hand the cohomology long exact sequence for $\ovl M,\pl\ovl M$ contains 
$$
H^0(\pl\ovl M) \xrightarrow[]{\delta_0} H^1(\ovl M,\pl\ovl M) \to H^1(M) \to H^1(\pl\ovl M)
$$
which in turn yields
\begin{align*}
\log|H^1(M)_\tors| &\le \log[H^1(\ovl M,\pl\ovl M):\im \delta_0] + \log|H^1(\pl\ovl M)_\tors| \\
                                 &\le \log|H_0'(M)_\tors| + \log|H_1(\pl\ovl M)_\tors|
\end{align*}
where the inequality on the second line follows from \eqref{splorge}. The right-hand side is an $o(\vol M_n)$, as follows from Lemmas \ref{sgoc} and \ref{torsbord}, which finishes the proof. 
\end{proof}


\subsection{Growth of regulators}

\subsubsection{Degree 1}

\begin{lemma} \label{reg1}
We have 
\[
\liminf_{n\to\infty}\frac{\log\vol H^1(M_n;V_\ZZ)}{\vol M_n} \ge 0. 
\]
\end{lemma}

\begin{proof}
The embedding $H^1(M_n;V_\CC)\to H^1(\pl\ovl M_n;V_\CC)$ is isometric by definition of the inner product on $H^1(M_n;V_\CC)$ and its image is the subspace 
$$
\{\omega+\Phi^+(s_V^1)\omega , \omega\in H^{1,0}(\pl\ovl M_n;V_\CC)\}. 
$$
Let $\pi$ be the orthogonal projection of $H^1(\pl\ovl M_n;V_\CC)$ onto $H^{1,0}(\pl\ovl M_n;V_\CC)$. Then Lemma \ref{splitt} implies that the image $\pi(H^1(\pl\ovl M_n;V_\ZZ))$ contains $H^{1,0}(\pl\ovl M_n;V_\ZZ)$ with an index which is $\ll (\vol M_n)^{4h_n}$. As 
$$
\vol\pi(i_1^*H^1(M_n;V_\ZZ)) \le \vol i_1^*H^1(M_n;V_\ZZ) 
$$
we get that 
$$
\vol H^1(M_n;V_\ZZ) \gg (\vol M_n)^{4h_n}\vol H^{1,0}(\pl \ovl M_n; V_\ZZ)
$$
and together with Lemmas \ref{volbd} and \ref{nbcusps} this finishes the proof. 
\end{proof}


\subsubsection{Degree 2}

\begin{lemma} \label{reg2}
We have 
\[
\frac{\log\vol H^2(M_n;V_\ZZ)_\free}{\vol M_n} \xrightarrow[n\to\infty]{} 0. 
\]
\end{lemma}

\begin{proof}
The map $i_2^* : H^2(M_n;V_\CC)\to H^2(\pl\ovl M_n;V_\CC)$ is an isometry according to the definition \eqref{deg2} of the inner product on $H^2(M_n;V_\CC)$. Moreover, using the long exact sequence we get 
$$
[H^2(\pl\ovl M_n;V_\ZZ) : \im i_2^*]  = |H^3(\ovl M_n,\pl\ovl M_n;V_\ZZ)| = |H_0(M_n;V_\ZZ)|
$$
and it follows that 
$$
|\log\vol H^2(M_n;V_\ZZ)_\free| \le \log|H_0(M_n;V_\ZZ)| + |\log\vol H^2(\pl\ovl M;V_\ZZ)|.
$$ 
Now the right-hand side is an $o(\vol M_n)$, as follows from Lemmas \ref{sgoc}\ref{volbd}, so that the proof is complete.
\end{proof}


\subsection{Homology from cohomology}
\label{cohohoho}
We can finally deduce \eqref{torsho} from \eqref{torsco}: from the sequence 
$$
H_1(\pl\ovl M) \to H_1(M) \to H_1(\ovl M,\pl\ovl M) \to H_0(\pl\ovl M),
$$
Lemmas \ref{torsbord} and \ref{sgoc}, and Poincar\'e duality it is clear that it suffices to show that the index of the sublattice $i_*H_1(\pl\ovl M_n)_\free$ in $H_1(M_n)_\free$ is an $o(\vol M_n)$. We will not detail how to prove this, as it follows from the proof of Lemma \ref{reg1} (where it was shown that the torsion subgroup of $H^1(\pl\ovl M_n)/i^*H^1(M_n)$ is of order $o(\vol M_n)$) and Kronecker duality as in the proof of Lemma \ref{tors1}. We could also have applied the universal coefficients theorem as in \cite[Lemma 3.1]{Pfaff} to deduce it from \eqref{torsco} applied to the dual lattice of $V_\ZZ$ in $V_\QQ$.


\bibliographystyle{plain}
\bibliography{bib2.bib}

\end{document}